\documentclass[12pt,a4paper]{scrartcl}
\usepackage{amsmath}\usepackage{amssymb}
\usepackage{amsthm}
\usepackage[english]{babel}
\usepackage{nicefrac}  \usepackage{floatflt}
\usepackage[pdftex]{graphicx}    
\usepackage[sortcites=true,maxbibnames=99,giveninits=true,doi=false,url=false,isbn=false]{biblatex}
\newcommand{\R}{\mathbb{R}}

\newcommand{\Z}{\mathbb{Z}}

\newcommand{\A}{\mathcal{A}}

\newcommand{\vol}{\mathrm{vol}}
\newcommand{\Span}{\mathrm{span}}
\newcommand{\lsim}{\lesssim}
\newcommand{\gsim}{\gtrsim}
\newcommand{\conv}{\mathrm{conv}}

\newcommand{\N}{\mathbb{N}}
\newcommand{\mo}{\; \mathrm{mod} \;}
\newcommand{\eps}{\varepsilon}

\title{A duality principle for the neighbourhood of manifolds and applications to counting rational points near curves}
\author{ Lauritz Streck}
\date{\today}

\begin{document}
\newtheorem{theorem}{Theorem}
\newtheorem{definition}[theorem]{Definition}
\newtheorem{question}[theorem]{Question}
\newtheorem{remark}[theorem]{Remark}
\newtheorem{lemma}[theorem]{Lemma}
\newtheorem{corollary}[theorem]{Corollary}
\newtheorem{prop}[theorem]{Proposition}
\newtheorem{claim}[theorem]{Claim}
\newtheorem{conj}[theorem]{Conjecture}
\newcommand{\restatedlabel}{}
\newtheorem*{restatedinner}{\restatedlabel}
\newenvironment{restated}[1]{\renewcommand{\restatedlabel}{#1}\begin{restatedinner}}{\end{restatedinner}}

\maketitle
\begin{abstract}
In this paper,  we establish a duality principle relating the count of rationals near a manifold to a sparse counting problem for the dual.  As applications,  we give a short,  self-contained proof of sharp bounds for planar curves under minimal regularity conditions and similarly,  sharp upper bounds for non-degenerate space curves,  again under minimal curvature conditions.  This work recovers and extends the works of Beresnevich,  Dickinson and Velani and of Vaughan and Velani on planar curves and independently proves and extends the results of Chen,  Seeger,  Srivastava and Technau on space curves,  while providing a structural picture well-suited for generalisations.  The results and methods of this work precede the emergence of highly capable AI models.
\end{abstract}

\section{Introduction}
In this paper,  we establish a duality principle and detail applications of it to the question of how many rational points up to a given height lie in the small neighbourhood of a bounded manifold $\mathcal{M}$.  In this first section,  we will briefly state the problem and trace some of its history before we sketch our duality principle and the results coming out of it.  In the next sections of the introduction,  we will then give a more thorough overview of the previous papers,  state our duality principle and detail its applications to planar and space curves.  There,  we will also give a short, direct and self-contained proof of sharp bounds for planar curves,  which despite its simplicity gives much better bounds under weaker regularity assumptions than all currently known arguments.  

At the end of the introduction is the AI disclaimer (used only in the final phase of this project; all significant parts of the paper precede the current highly powerful AI models) and some reflections on how this paper fits into the emerging mathematical world dominated by the use of AI models.  

\subsection{Problem statement}
To state the counting problem of rational points,  fix a manifold $\mathcal{M}$,  an integer $Q$ and an $\eps>0$.  Let 
\[
\mathcal{A}_Q:=\{\vec{a}/q: (\vec{a}, q) \in \Z^{n+1}, Q/2 \leq q \leq Q \}
\]
denote the rational points up to height $Q$.  Letting $\eps:=Q^{-(1+\tau)}$ decrease with $Q$,  we are then interested in the count and the structure of the set $\mathcal{A}_Q \cap B_\eps(\mathcal{M})$,  where $B_\eps(\mathcal{M})$ is the $\eps$-neighbourhood of the manifold $\mathcal{M}$.  To be consistent with previous notation,  we set $\delta=\eps Q$ and $\mathcal{N}(\delta, Q)=\#(\mathcal{A}_Q \cap B_\eps(\mathcal{M}))$.  

For a unified behaviour over a class of manifolds of a given dimension,  one has to rule out the manifold being close to a lower dimensional subspace (as the expected behaviour has to depend on the dimension).  This forces us to impose curvature conditions on $\mathcal{M}$; in the hypersurface case, the minimal possible one is that of $\mathcal{M}$ not having planar points and in the curve case,  it is that of the curve being non-degenerate.  

In terms of $\tau$,  for a given manifold of dimension $k$ and co-dimension $n-k$,  the behaviour of the set $\mathcal{A}_Q \cap B_\eps(\mathcal{M})$ exhibits two phase transitions: For small $\tau$,  the total number of rational points in $B_\eps(\mathcal{M})$ up to height $Q$ is controlled by the bound expected from the volume $\eps^{n-k} Q^{n+1}$.  For intermediate $\tau$,  the total number of rationals might exceed the volume bound,  but only because of the contributions of small patches of the manifold aligned with lower dimensional subspaces.  In this range,  one can still ask about the structure of the set $\mathcal{A}_Q \cap B_\eps(\mathcal{M})$,  for example in terms of their Hausdorff dimension.  Finally,  for large $\tau$ the structure breaks down,  for example in that the counts obtained cease to agree when translating the manifold in $\R^n$.  

For planar curves and other hypersurfaces,  the two phase transitions agree,  but for more general manifolds,  they are typically distinct.

\subsection{Sketch of methods}
Let $\mathcal{M}$ be a manifold and $\eps>0$ be a parameter; we now sketch our main duality result.  Assume that the manifold is explicitly parametrised by $\mathcal{M}=\left\{ (x, f(x)): x \in U \right\}$ for $f : \R^k \to \R^{n-k}$.  Tile the domain into patches $U_j$ of diameter $\sqrt{\eps}$ with center points $x_j$ and linearise;  by Taylor approximation,  the family of boxes 
\[
V_j=\{(y_1, y_2) \in \R^k \times \R^{n-k}: y_1 \in U_j,  |y_2-f(x_j)-Df(x_j) \cdot (y_1-x_j)| \leq \eps \},
\]
with $Df \in Mat_{n-k,  k}$ the derivative of $f$,  then tiles $B_\eps(\mathcal{M})$ up to small error.  Now,  assume that $\eps$ and $Q$ are such that $\eps^{k/2} \eps^{n-k} \geq Q^{-(n+1)}$,  which means that a random translation of a patch would be expected to contain some rational point of height up to $Q$. Our main duality result then essentially states that for any given box $V_j$ that contains significantly more or significantly fewer rationals up to height $Q$ than expected,  there must be rational points of height up to some $\widetilde{Q}$ in the dual box $\widetilde{V}_j$ (which arises from projective duality and is defined below).  The patching together of the dual of all of the boxes then leads to what we call the dual counting problem,  which for manifolds with the minimal curvature assumptions needed corresponds locally to the linearised covering of another manifold.  For example,  for hypersurfaces of positive Gaussian curvature,  this new manifold is the Legendre dual,  while for a non-degenerate curve in $\R^n$ it is the projective dual of the curve corresponding to the osculating hyperplanes.  However,  the result also allows us to dualise manifolds with much less rigid curvature assumptions locally.  

This leaves us with a new counting problem, but the $\widetilde{V}_j$ and $\widetilde{Q}$ are this time such that $\vol(\widetilde{V}_j)<\widetilde{Q}^{-(n+1)}$ so we are in what can be described as the `sparse' setting,  in that we do not expect the patches to contain a rational point.  Our second insight is that for manifolds with curvature,  the number of patches containing such a rational can be controlled purely geometrically using the determinant method,  which implies that any $n+1$ many rational points of height up to $\widetilde{Q}$ not contained in an affine hyperplane must span a parallelepiped of volume at least $\widetilde{Q}^{-(n+1)}$.  Combining the two ideas,  we get our results.  

We end with a broader take on our methods. Namely,  there are some reasons to expect a version of the duality in this paper to be useful much more widely than for the specific problems considered in this paper.  In many seemingly geometric problems like the ones considered in the current paper,  Fourier analysis is used as a tool.  I have always found the appearance of Fourier analysis in this context somewhat puzzling: Why would one expect the decomposition into oscillations to be necessary for a geometric problem? 

In the context of the problem of estimating rationals near curves,  the gist of the answer is thus: Introducing oscillations is neither necessary nor useful,  but duality through Fourier analysis still gives something because it also captures the decomposition into directions and hyperplanes.  However,  in the context of the problems in this paper,  it merely acts as a crutch for the more direct projective duality principle developed,  and switching to it leads to a more direct,  intuitive and simpler approach.  It seems quite possible that in many other `geometric looking problems' like the currently much considered incidence problems,  using the direct duality principle in Lemma \ref{lem::boxdual} and Theorem \ref{thm::nearcurvedual} instead of Fourier analysis could lead to a more direct and powerful approach,  and that many of the decompositions and methods used might be translatable to a geometrically intuitive decomposition of the points near the dual object.  If so,  this approach might lead to valuable new perspectives.     

\subsection{Previous progress on the problem}
The problem of counting rational points near curves and manifolds is a very active research area; instead of citing all results,  we focus on the ones most relevant to us and only cite the recent work of Beresnevich and Datta \cite{beresnevich-datta,  beresnevich-datta-2} as an example of obtaining partial results for rationals near manifolds with the aid of the Kleinbock-Margulis result \cite{kleinbock-margulis} from homogeneous dynamics.  For further results on rational points near manifolds,  see for example \cite{beresnevich,  beresnevich-vaughan-velani-zorin-conv,  beresnevich-vaughan-velani-zorin-div,  huang,  schindler-srivastava-technau,  beresnevich-yang} and the references therein.  

The first progress on the problem was made for planar curves by Huxley \cite{huxley},  who showed that $\mathcal{N}(\delta, Q) \lsim \delta^{1-\eps} Q^2$ for any $\eps>0$.  Huxley used the determinant method and the Legendre dual of the curve together with direct computation and an iteration scheme to obtain his result.  Subsequently,  the papers \cite{beresnevich-dickinson-velani} and \cite{vaughan-velani} together established the sharp upper and lower bounds $\delta Q^2$ with error terms of $Q^{1+\eps}$ for the count, and Huang then established an asymptotic formula \cite{huangasymp}.  Both \cite{huangasymp} and \cite{vaughan-velani} relied on Fourier analysis,  had an error term of $Q^{1+\eps}$ and additional regularity assumptions.  Moreover, the two papers are very indirect in that they rely on Huxley's bound in some form.  

Subsequently,  Huang showed an estimate for the rational points near a hypersurface of positive Gaussian curvature \cite{huang}.  He also used Fourier analysis to induce duality,  which left him with a new counting problem near the Legendre dual in which the parameters only went up to `sparse' height (we will make this more precise below).  His insight was that while he could not treat the sparse case directly,  he could artificially enlarge the neighbourhood to get back to the dense setting and iterate the result there.  Because the numerics are so favorable in larger dimensions for manifolds of positive curvature,  this allowed him to arrive at the sharp threshold in dimensions $ n \geq 3$.  This approach is one of the few results not using Huxley's bound,  but the rather indirect approach also leads to the relatively rigid assumption of positive Gaussian curvature and a relatively unwieldy error term in dimension $n=3$.  For planar curves,  this approach does not reach the sharp threshold.

In very recent work,  Chen,  Seeger,  Srivastava and Technau \cite{chen-seeger-srivastava-technau} showed that for space curves,  the total count of rational points is at most the expected count of $\delta^2 Q^2$ as long as $\delta \geq Q^{-1/3}$.  This range of $\delta$ is sharp because of the possible presence of a single line segment containing $\eps^{1/2} Q^2$ many rational points.  Their proof relies on Fourier analysis to induce duality and through the result of \cite{vaughan-velani} also on Huxley's result for planar curves.  

In a separate approach,  Badziahin \cite{badziahin-veronese,  badziahin-3d-veronese} and Badziahin and Shulga \cite{badziahin-shulga} studied the Hausdorff dimension structure of the moment curve in $\R^3$.  The first two papers reached the value of $\tau=3/5$ conjectured by Beresnevich and Yang \cite{beresnevich-yang},  while the last one moves beyond it.  For space curves,  the question of Hausdorff dimension of well-approximable numbers makes sense beyond the threshold at which the total count breaks down due to the presence of the short line segments alluded to above,  and their work is strictly stronger than what applying the work of Chen, Seeger, Srivastava and Technau \cite{chen-seeger-srivastava-technau} to the moment curve gives.  The work of Badziahin and Badziahin--Shulga relies on bounds for the count of cubic forms of a given discriminant by Davenport \cite{davenport-1,  davenport-2} and Hooley \cite{hooley}. 

Finally,  the recent AI-assisted paper of Gan,  Guo and Oh \cite{gan-guo-oh} gives a sharp counting result for curves in $\R^n$,  $n \geq 4$,  with the use of Fourier analysis and the wave packet method of Gan,  Maldague and Oh \cite{gan-maldague-oh}.  They note that their results explicitly do not imply bounds for the case of planar curves.
\subsection{Results}
One of the two main ideas of this paper is to use projective duality directly,  rather than Fourier analysis to induce the duality.  Our first result is the following. 

Here $\mathcal{N}(Q, V)$ denotes the number of rational points of height at most $Q$ in $V$,  and the dual vectors $\widetilde{\Phi e_j}$ and dual boxes are defined in Section \ref{subsec::boxtransform}.
\begin{theorem} \label{thm::maindualitybox}
Let $n$ be an integer.  There are constants $c_0, C>0$,  an integer $N$ and integer matrices $\Phi_1, \dots, \Phi_N \in SL_n(\Z)$ with orthogonal columns, all depending only on the dimension (picking $c_0$ first,  then $N$,  and then $C$),  such that the following holds.  For any $x_0 \in \R^n$,  for any orthonormal basis $e_1,  \dots, e_n$ and for any $c_0 \geq \eps_1 \geq \dots \geq \eps_n >0$ and $Q \in \mathbb{N}$ such that 
\[
\eps_1 \cdot \dots \cdot \eps_n \geq C Q^{-(n+1)},
\]
consider the box 
\[
V:=\{x_0+\sum_{j=1}^n \beta_j e_j,  |\beta_j| \leq \eps_j \forall 1 \leq j \leq n \}.
\]
Set $ \widetilde{\eps_j}:=\frac{\eps_n}{\eps_{n-j}}$ with $\eps_0:=1$ for $1 \leq j \leq n$ and $\widetilde{Q}:=\frac{1}{\eps_n Q}$.

For any parameter $A \geq 1$,  if
\[
\mathcal{N}(Q,  V) \geq A \eps_1 \dots \eps_n Q^{n+1},
\]
the rational points in $V$ up to height $Q$ must then lie in a rational subspace of dimension $n-r$ for some $r \geq 1$.  There is then a $\Phi=\Phi_j$ from the above finite collection with $(\Phi e_n)_n \geq c_0$ such that for $V_\Phi:=x_0+\Phi (V-x_0)$, the dual box
\[
\widetilde{V}_\Phi=\left\{x_{V_\Phi}+\sum_{j=1}^n \beta_j \widetilde{\Phi e_j},  |\beta_j| \leq \widetilde{\eps_j}\right\},
\]
where $x_{V_\Phi}$ is the dual point of the hyperplane orthogonal to $\Phi e_n$ passing through $x_0$,  satisfies the following.

For $1 \leq i \leq r$, there are $B_i \geq 1$, $m_i \leq \widetilde{Q}/B_i$ such that
\[
\prod_{i=1}^r \frac{\widetilde{Q}}{B_i m_i} \gsim A
\]
and there are rational points $\vec{a_i}/m_i \in (\widetilde{V}_\Phi)_{B_i} \cap K(C)$,  where $\vec{a_i} \in \Z^n$ and $K(C)$ is the compact ball around the origin with radius $C$. The rational points $\vec{a_i}/m_i$ are in general position,  meaning that no $s+2$ of them are contained in an $s$-dimensional affine subspace.  The rational subspace of dimension $n-r$ above is $H_1 \cap \dots \cap H_r$,  where $H_i$ is the hyperplane corresponding to $\vec{a_i}/m_i$ under projective duality.

Moreover,  if the box is such that $\vec{a}/m \notin (\widetilde{V}_\Phi)_{C \widetilde{Q}/m} \cap K(C)$ for all $\Phi=\Phi_j$ and all integers $(m,  \vec{a}) \in \Z^{n+1}$,  then 
\[
\mathcal{N}(Q,  V) \gsim \eps_1 \dots \eps_n Q^{n+1}.
\]
\end{theorem}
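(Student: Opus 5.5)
The natural route is to lift to $\R^{n+1}$ and use the geometry of numbers. A rational point $\vec{a}/q$ with $Q/2 \leq q \leq Q$ lies in $V$ exactly when $(\vec{a}, q) \in \Z^{n+1}$ lies in the cone over $V$ truncated at heights in $[Q/2, Q]$. Writing $x = x_0 + \sum_j \beta_j e_j$, this is the convex body
\[
K_V = \{ (q x_0 + \textstyle\sum_j \gamma_j e_j,\, q) : Q/2 \leq q \leq Q,\ |\gamma_j| \leq q \eps_j \},
\]
which has volume $\asymp \eps_1 \cdots \eps_n Q^{n+1} \geq C$. So $\mathcal{N}(Q,V)$ is, up to the primitivity and truncation issues, the number of lattice points of $\Z^{n+1}$ in a convex body of volume at least $C$. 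After replacing $K_V$ by a comparable centrally symmetric body (a parallelepiped containing the cone segment, centred at $(Q x_0, Q)$ and then symmetrised), I would apply Minkowski's second theorem with successive minima $\lambda_1 \leq \dots \leq \lambda_{n+1}$. The standard estimate gives
\[
\#(K_V \cap \Z^{n+1}) \asymp \prod_{i} \max(1, \lambda_i^{-1}) \cdot \mathbf{1}[\text{appropriate translate is nonempty}].
\]
Two cases follow. If all $\lambda_i \gsim 1$, the count is $\asymp \vol(K_V)$; this gives both the lower bound in the "moreover'' part and rules out excess. If the count is $\geq A \vol(K_V)$, then some $\lambda_i$ are large, and the lattice points lie in the span of the first $n+1-r$ minima vectors, i.e. in a rational subspace. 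Projectivising gives the rational subspace of dimension $n-r$.

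Next I would dualise through the polar body. By Mahler's transference, the successive minima of the dual body $K_V^*$ with respect to $\Z^{n+1}$ satisfy $\lambda_i^* \asymp 1/\lambda_{n+2-i}$. The polar of a box with semi-axes $q\eps_j$ in the $e_j$ directions and $Q$ in the height direction, centred over $x_0$, is a box in the space of linear forms $(\vec{b}, -m)$ with $|\langle \vec{b}, e_j\rangle| \lsim 1/(Q\eps_j)$ and $|\langle \vec{b}, x_0\rangle - m| \lsim 1/Q$. Normalising by $\langle \vec{b}, e_n\rangle$, which is the largest allowed component and has size up to $1/(\eps_n Q) = \widetilde{Q}$, turns the linear form into a hyperplane $\{x : \langle \vec{b}, x\rangle = m\}$. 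Its dual point has coordinates measured relative to the hyperplane orthogonal to $e_n$ through $x_0$, with deviations $\eps_n/\eps_{n-j}$ in the tilted directions. This is precisely $\widetilde{V}$ with $\widetilde{\eps_j}$ and $\widetilde{Q}$. A small dual minimum $\lambda_i^* = 1/B_i$ therefore produces a rational point $\vec{a_i}/m_i$ in the $B_i$-dilate $(\widetilde{V})_{B_i}$ with $m_i \leq \widetilde{Q}/B_i$. The size of the excess count is $\prod \lambda_i^{-1} \asymp \prod (\lambda^*_{\cdot})$. Matching this against the "height deficiency'' $\widetilde{Q}/(B_i m_i)$ of each dual point should give $\prod_i \widetilde{Q}/(B_i m_i) \gsim A$. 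I expect the exact bookkeeping to go through the ratio of the relevant coordinate of the minimum vector to its allowed size. Linear independence of the dual minima vectors gives general position of the $\vec{a_i}/m_i$. The intersection $H_1 \cap \dots \cap H_r$ is the annihilator of their span, which is the primal subspace found above.

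The role of the matrices $\Phi_j$ is to make the normalisation legitimate. To divide by the $e_n$ component of $\vec{b}$ and land in a compact ball $K(C)$, the hyperplanes must be uniformly transverse to a coordinate direction. I would apply a unimodular change of variables on $\Z^n$, extended to $\Z^{n+1}$, which preserves the lattice and the counting problem. A compactness argument on the sphere gives the finite family: pick $c_0$, then finitely many $\Phi \in SL_n(\Z)$ with orthogonal columns such that every unit vector is mapped by some $\Phi_j$ to one with last coordinate at least $c_0$. Orthogonal-column integer matrices are scaled rotations, and dense enough direction sets exist, e.g. those built from Pythagorean-type integer matrices. The constraint $\eps_1 \leq c_0$ ensures the box stays small after this bounded distortion.

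The main obstacle I expect is the lower bound in the "moreover'' statement. Minkowski-type counting in a non-symmetric cone segment needs both that no dual minimum is small and that a translate is nonempty. The hypothesis that $\widetilde{V}_\Phi$ contains no $\vec{a}/m$ even in the dilate $(\widetilde{V}_\Phi)_{C\widetilde{Q}/m}$ must be shown to force $\lambda^*_1 \gsim 1$, and hence all primal $\lambda_i \lsim 1$ by transference. I would then obtain $\gsim \vol$ lattice points in a shrunk copy of $K_V$ from a fundamental-domain argument using a reduced basis. Care is needed that these points have $q \in [Q/2, Q]$ and are counted without multiplicity from non-primitive vectors. I would handle this by counting in a thinner slab and using that non-primitive contributions are a lower-order fraction once $C$ is large.
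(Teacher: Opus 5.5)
Your route is the paper's: the symmetric parallelepiped $F$ over the cone, the count $\vol(F\le 1)\prod_j\max(1,1/\lambda_j^*)$ of Theorem \ref{thm::polarcount}, the polar computed as in Lemma \ref{lem::polardist}, finitely many coordinate changes $\Phi$, and a tiling argument once all primal minima are small. (Your first case should read ``all $\lambda_i\lsim 1$''.) The gap is in the step you defer as bookkeeping: $B_i$ is not $1/\lambda_i^*$.

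Let $y_i=(\vec{a_i},m_i)$ attain $F^*(y_i)=\lambda_i^*$, with $m_i$ the integer coordinate used for the normalisation. The polar constraints bound each coordinate of $y_i$ in the dual frame by $\lambda_i^*\widetilde{Q}$ times the corresponding side of $\widetilde{V}_\Phi$. The normalised coordinate has side $1$, so in particular $m_i\lsim\lambda_i^*\widetilde{Q}$. Dividing by $m_i$ places $\vec{a_i}/m_i$ in $(\widetilde{V}_\Phi)_{B_i}$ with $B_i=\widetilde{Q}\lambda_i^*/m_i\gsim 1$, which is the paper's definition $B_i:=\widetilde{Q}F^*(\vec{a_i},m_i)/m_i$.

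With this choice $\widetilde{Q}/(B_im_i)=1/\lambda_i^*$ identically. So $\prod_i\widetilde{Q}/(B_im_i)\gsim A$ is literally the excess factor of Theorem \ref{thm::polarcount}, and $m_i\le\widetilde{Q}/B_i$ is just $\lambda_i^*\le 1$; there is no separate ``height deficiency'' to match.

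With your choice the inequality fails. Take $x_0\in\{x_n=0\}$ otherwise generic, $e_n=(0,\dots,0,1)$ and $\eps_nQ<1$. Then all counted points lie on this height-one hyperplane and the excess is $A\approx 1/(\eps_nQ)=\widetilde{Q}$. It comes from $y=(0,\dots,0,1)$ with $m=1$ and $\lambda^*\approx\eps_nQ$. Your $B=1/\lambda^*=\widetilde{Q}$ gives $\widetilde{Q}/(Bm)=1$, losing all of $A$.

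The same identity proves the ``moreover'' part. A dual vector with $F^*<C$ yields, in a frame $\Phi$ adapted to it, a point of $(\widetilde{V}_\Phi)_{C\widetilde{Q}/m}\cap K(C)$. So the hypothesis forces $\lambda_1^*\geq C$, and by Mahler all primal minima are $\lsim 1/C$.

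The $\Phi$ step also needs repair, in three places.
\begin{itemize}
\item The denominator must be an integer coordinate of $y_i$, not $\langle\vec b,e_n\rangle$.
\item A single $\Phi$ must make that coordinate $\gsim\|y_i\|$ for all $r$ dual vectors at once, since the $H_i$ are dualised in one frame, while also keeping $(\Phi e_n)_n\ge c_0$. Treating each unit vector separately is not enough. The paper gets simultaneity from a measure count on $\mathbb{S}^{n-1}$: for each of the at most $n$ vectors $(\underline{a_i},m_i)$ the bad directions form a $c_0$-neighbourhood of a great sphere, and these cannot cover the sphere for small $c_0$.
\item The family you describe does not exist. For orthogonal columns $|\det\Phi|=\prod_i\|\Phi e_i\|$, and nonzero integer vectors have norm at least $1$. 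So the integer matrices in $SL_n(\Z)$ with orthogonal columns are exactly the signed permutations, which cannot give both $e_1$ and $e_2$ a large last coordinate at once. Pythagorean-type scaled rotations, on the other hand, have determinant $k^n$ and do not preserve $\Z^n$.
\end{itemize}
The statement and the paper's proof share this last tension. The fix is to drop one requirement. Either use finitely many general $SL_n(\Z)$ matrices, which distort $V$ only boundedly into a parallelepiped, handled as in the remark after Lemma \ref{lem::boxdual}. Or use rational rotations with bounded denominators, which change heights only by a bounded factor.

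Finally, prove the inclusion of the counted points in $H_1\cap\dots\cap H_r$ directly, as the paper does: for integer $x$ with $F(x)\le 1$ one has $|\langle x,y_i\rangle|\le F(x)F^*(y_i)<1$, so $\langle x,y_i\rangle=0$. Matching primal and dual minima through Mahler does not suffice, since they agree only up to constants.
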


The finitely many matrices $\Phi_j$ only fix a system of coordinates in which projective duality is well behaved for the box at hand; their role is explained in Remark \ref{rem::Phiexplanation}.

For a given curve,  we can linearise its $\eps$-neighbourhood into boxes and apply the duality to each box individually.  The dual box of each linearised box corresponds to the box around the dual curve,  which leads to the following.  

In the statement below,  $B_{\mathbf \eps}(t)$ denotes the $\eps_1 \times \dots \times \eps_n$-box adapted to the curve at time $t$,  $\Delta$ the associated step size and $V(\mathbf \eps)$ the adapted neighbourhood; these and the minimal cover are defined in Section \ref{sec::dualcurves}.
\begin{theorem} \label{thm::mainduality}
There is a finite collection of matrices $\Phi_k \in SL_n(\Z)$ with orthogonal columns such that the following holds. Let $\gamma$ be a non-degenerate curve in $\R^n$ with dual curve $\gamma^\ast$,  let $c_0 \geq \eps_1 \geq \dots \geq \eps_n>0$ and let $Q \in \N$ be such that
\[
\eps_1 \cdot \dots \cdot \eps_n \geq C Q^{-(n+1)},
\]
where $c_0$ and $C$ are the constants from Theorem \ref{thm::maindualitybox}.  Let $\Delta$ be the associated step size,  $N:=\lfloor \Delta^{-1} \rfloor$,  and let $t_1,  \dots,  t_N$ be as in Lemma \ref{lem::minimalcover}.  Let $V_j:=B_{\mathbf \eps}(t_j)$,  $1 \leq j \leq N$,  be the boxes of the minimal cover of $V(\mathbf \eps)$,  and let $\widetilde{V}_j$ be the $\widetilde{\eps_1} \times \dots \times \widetilde{\eps_n}$-box adapted to the dual curve $\gamma^\ast$ at time $t_j$,  where $\widetilde{\eps_i}=\eps_n/\eps_{n-i}$.  For $B \geq 1$,  let $(\widetilde{V}_j)_B$ denote the corresponding box with parameters $B\widetilde{\eps_1},  \dots,  B\widetilde{\eps_n}$.  Set $\widetilde{Q}:=\frac{1}{\eps_n Q}$.  

For any parameter $A \geq 1$,  if
\[
\mathcal{N}(Q,  V_j) \geq C A \eps_1 \dots \eps_n Q^{n+1}
\]
for some $j$, then there is a $\Phi_k$ in the collection above such that for all $1 \leq i \leq r<n$ there are $B_i \geq 1$ and $m_i \leq \widetilde{Q}/B_i$ such that
\[
\prod_{i=1}^r \frac{\widetilde{Q}}{B_i m_i} \gsim A,
\]
and there are rational points $\vec{a_i}/m_i \in (\widetilde{\Phi_k V_j})_{B_i} \cap K(C)$ with $\vec{a_i} \in \Z^n$.  The rational points $\vec{a_i}/m_i$ are in general position,  meaning that no $s+2$ of them are contained in an $s$-dimensional affine subspace.  The rational points in $V_j$ lie in the $(n-r)$-dimensional rational subspace $H_1 \cap \dots \cap H_r$,  where $H_i$ is the hyperplane corresponding to $\vec{a_i}/m_i$ under projective duality (in the coordinates given by $\Phi_k$).  Here, the $\widetilde{\Phi_k V_j}$ are the boxes of the adapted cover of the dual curve $(\Phi_k \gamma)^\ast$.
\end{theorem}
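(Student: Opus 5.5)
This is essentially a corollary of Theorem \ref{thm::maindualitybox}. We apply that theorem to each box $V_j$ of the minimal cover, so the real work is to check that the dual box it produces agrees, up to a bounded dilation, with the box adapted to the dual curve. The plan is as follows. Fix $j$ with $\mathcal{N}(Q,V_j)\geq CA\eps_1\cdots\eps_n Q^{n+1}$. By the definition in Section \ref{sec::dualcurves}, $V_j=B_{\mathbf\eps}(t_j)$ is the box centred at $x_0=\gamma(t_j)$ with side lengths $\eps_1\geq\dots\geq\eps_n$. Its axes are the orthonormal Frenet-type frame $e_1,\dots,e_n$ obtained by Gram--Schmidt from $\gamma'(t_j),\dots,\gamma^{(n)}(t_j)$. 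The hypotheses $c_0\geq\eps_1$ and $\eps_1\cdots\eps_n\geq CQ^{-(n+1)}$ are exactly those of Theorem \ref{thm::maindualitybox}. Applying it with parameter $CA\geq A$ yields the following: a matrix $\Phi=\Phi_k$ from the finite collection, an integer $r\geq1$, and rationals $\vec{a_i}/m_i\in(\widetilde{V}_\Phi)_{B_i}\cap K(C)$ in general position with $\prod_i \widetilde{Q}/(B_im_i)\gsim A$. It also tells us that the rational points of $V_j$ lie in $H_1\cap\dots\cap H_r$. We have $r<n$ because $\mathcal{N}(Q,V_j)>0$ forces the subspace to be nonempty, indeed to have dimension at least $0$; if $r=n$ is permitted, we simply note this case separately.

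The substantive step is to identify $\widetilde{V}_\Phi$, the dual box from Section \ref{subsec::boxtransform} built from $x_0+\Phi(V_j-x_0)$, with the $\widetilde{\eps_1}\times\dots\times\widetilde{\eps_n}$-box adapted to $(\Phi_k\gamma)^\ast$ at $t_j$, up to constant dilation. Since $\Phi_k$ is an integer matrix, it maps rationals of height $Q$ to rationals of height $Q$. Moreover $\Phi_k\gamma$ is again non-degenerate, and its Frenet frame at $t_j$ is, to within bounded distortion, $\Phi_ke_1,\dots,\Phi_ke_n$, because $\Phi_k$ has orthogonal columns and the collection is finite. So it suffices to treat $\Phi=\mathrm{Id}$ with the condition $(e_n)_n\geq c_0$.

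In that case the dual point $x_{V}$ is the dual of the osculating hyperplane $\gamma(t_j)+\Span(e_1,\dots,e_{n-1})$, which is precisely $\gamma^\ast(t_j)$. The dual directions $\widetilde{e_i}$ arise from rotating the hyperplane inside $\Span(e_{n-i},e_n)$ and from translating along $e_n$. We will show that they span, in order, the same flag as $(\gamma^\ast)'(t_j),\dots,(\gamma^\ast)^{(n)}(t_j)$. This follows by differentiating the relation $\langle \gamma^\ast(t),(\gamma(t),1)\rangle$-type incidence identities: the $i$-th derivative of the osculating hyperplane only moves in the direction of $e_{n-i}$ at leading order. The scales also match, since $\widetilde{\eps_i}=\eps_n/\eps_{n-i}$ is the angle of tilt that keeps a hyperplane inside $V_j$ along $e_{n-i}$. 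This shows $\widetilde{V}_\Phi\subset (\widetilde{\Phi_kV_j})_{C'}$ and conversely, with $C'$ depending only on $n$ and on the non-degeneracy constants of $\gamma$. We absorb $C'$ into the $B_i$, which changes $\prod\widetilde{Q}/(B_im_i)$ only by a constant, and we keep $m_i\leq\widetilde{Q}/B_i$ by enlarging the implicit constants. I expect this flag and scale comparison to be the main obstacle. The difficulty is that the boxes are highly eccentric, so bounded distortion of the frame is not enough on its own: one needs the dual frame to agree with the dual curve's frame in flag sense, with errors in direction $\widetilde{e_i}$ of size $O(\widetilde{\eps_i})$. I would verify this using the Taylor expansion of $\gamma$ to order $n$ over a step of length $\Delta$, together with the defining property of the step size in Lemma \ref{lem::minimalcover}.

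Finally, general position and the description of the subspace as $H_1\cap\dots\cap H_r$ in the $\Phi_k$-coordinates carry over verbatim from Theorem \ref{thm::maindualitybox}, since $\Phi_k\in SL_n(\Z)$ preserves affine dependence and the rational structure.
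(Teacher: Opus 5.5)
Your proposal follows the paper's own argument: apply Theorem \ref{thm::maindualitybox} to each box $V_j$ of the minimal cover, after the harmless change of coordinates by $\Phi_k$. Then identify the resulting dual box with the box adapted to the dual curve at $t_j$, which the paper does by citing the proof of Theorem \ref{thm::nearcurvedual}, i.e.\ the duality of osculating flags in Lemma \ref{lem::projdualcurve}. Two small corrections: that identification is exact at the single time $t_j$, since both boxes have centre $\gamma^\ast(t_j)$ and the same flag, and boxes adapted to a common flag with ordered parameters are comparable by the remark after Lemma \ref{lem::boxdual}, so no Taylor expansion over a step of length $\Delta$ is needed; and your argument for $r<n$ only yields $r\le n$, whereas $r=n$ is actually excluded because a $0$-dimensional subspace contains at most one rational point while $CA\eps_1\cdots\eps_nQ^{n+1}\geq C^2A>1$ (counting distinct points).
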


This gives the satisfying description that the only reason for an excess of rational points near the curve must come from rational points of small denominator near (one of the versions of) the dual curve.  Moreover,  because 
\[
\widetilde{Q}^{-(n+1)}=\eps_n^{n+1}Q^{n+1} \geq C\frac{\eps_n^{n+1}}{\eps_1 \dots \eps_n}=C\,\widetilde{\eps_1} \dots \widetilde{\eps_n},
\]
the counting problem near the dual curve is in the sparse setting,  meaning that we on average expect only a small proportion of the boxes to contain one of the problem rational points.  

One could of course do something similar for a manifold $\mathcal{M}=\{(x,  f(x)): x \in U \subset \R^k\}$,  decomposing the manifold into cubes of diameter $\sqrt{\eps}$.  One advantage of this is that we can dualise locally,  or only along a specific arc of the manifold like an arc of significant curvature.  This makes dualising manifolds with very weak curvature assumptions feasible. For example,  for surfaces in $\R^3$, this seems to open up a clear path to extending Huang's result \cite{huang} to surfaces with weaker curvature conditions. Indeed, as long as we are far from a planar point, there is a direction along which we can decompose $U$ into squares $U_{i, j}$ of diameter $\sqrt{\eps}$ such that the patches $V_{i, j}:=\{(x, y): x \in U_{i, j},\ |y-f(x)| \leq \eps\}$ in each row with $j=j_0$ follow a non-degenerate planar curve, thickened by $\sqrt{\eps}$ in the orthogonal direction. Applying the determinant method, we know that the `generic' points in $\{V_{i, j}, j=j_0\}$ cannot lead to an excess of rationals. The case of planes and lines intersecting the $V_{i, j}$ leads to a case distinction, almost all of which cases I was able to treat with volume arguments or the planar case when I had to cut the exploration of this project short.

Now,  the second idea comes into play,  which is that instead of dualising again,  we use the determinant method directly,  in the following form.

\begin{lemma} \label{lem::ratrepellancy}
Let $\vec{p}_1, \dots, \vec{p}_{n+1} \in \Z^n$ and $q_1, \dots, q_{n+1} \in \Z_{>0}$ such that $q_i \leq Q$ for some $Q$.  Define $v_i:=\vec{p}_i/q_i \in \R^n$.  Then
\[
\vol(\conv(v_1, \dots, v_{n+1})) \gsim Q^{-(n+1)}
\]
unless there is an affine hyperplane containing all $v_1, \dots, v_{n+1}$. 

Moreover,  if $H:=\{x: k_0+k_1 x_1+\dots+k_{n-1} x_{n-1}+m x_n=0\}$ is a rational hyperplane where $k_j$ and $m$ are integers with $\gcd(m, k_0, \dots, k_{n-1})=1$ and $m>0$, then for any collection of rational vectors $(\vec{a}_1/q_1,  b_1/q_1), \dots,  (\vec{a}_n/q_n,  b_n/q_n)$ contained in $H$,  where $\vec{a}_i \in \Z^{n-1}$ and again $q_i \leq Q$,
\[
\vol_{n-1}(\conv(\vec{a}_1/q_1, \dots, \vec{a}_n/q_n)) \gsim \frac{m}{Q^{n}}
\]
unless there is a subspace of dimension $n-2$ containing all of the rational points.  The implicit constants depend only on the dimension.
\end{lemma}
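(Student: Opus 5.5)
The plan is the standard determinant-method computation: write the volume as a determinant of homogeneous coordinates and use integrality.

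For the first claim, recall that
\[
\vol(\conv(v_1,\dots,v_{n+1}))=\frac{1}{n!}\left|\det\begin{pmatrix}1 & \cdots & 1\\ v_1 & \cdots & v_{n+1}\end{pmatrix}\right|.
\]
Pulling the factor $1/q_i$ out of the $i$-th column turns this matrix into the integer matrix $M$ with columns $(q_i,\vec p_i)\in\Z^{n+1}$. Hence
\[
\vol(\conv(v_1,\dots,v_{n+1}))=\frac{|\det M|}{n!\,q_1\cdots q_{n+1}}.
\]
If the $v_i$ are not contained in an affine hyperplane, the determinant is nonzero. Being an integer, it satisfies $|\det M|\geq 1$, and so the volume is at least $1/(n!\,Q^{n+1})$.

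For the second claim, the same computation applies one dimension down to the projected points $\vec a_i/q_i\in\R^{n-1}$. This gives
\[
\vol_{n-1}(\conv(\vec a_1/q_1,\dots,\vec a_n/q_n))=\frac{|\det M'|}{(n-1)!\,q_1\cdots q_n},
\]
where $M'$ is the integer $n\times n$ matrix with columns $(q_i,\vec a_i)$. If the projected points do not lie in a common $(n-2)$-dimensional affine subspace, then $\det M'\neq 0$. (Since $m>0$, $H$ is a graph over the first $n-1$ coordinates, so this is equivalent to the original points not lying in an $(n-2)$-dimensional subspace of $H$.) It remains to show $m\mid\det M'$, which gives $|\det M'|\geq m$ and hence the bound $m/Q^n$.

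This divisibility is the main step. Each point lying on $H$ means
\[
k_0q_i+k_1a_{i,1}+\dots+k_{n-1}a_{i,n-1}+m\,b_i=0,
\]
that is, $m b_i=-(k_0,\dots,k_{n-1})\cdot(q_i,\vec a_i)$. So every column $w_i=(q_i,\vec a_i)$ of $M'$ satisfies $k\cdot w_i\equiv 0\pmod m$, where $k=(k_0,\dots,k_{n-1})$.

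Put $g=\gcd(k_0,\dots,k_{n-1})$. Since $\gcd(m,g)=1$, the vector $k/g$ is primitive and the congruence still holds with $k$ replaced by $k/g$. Extend $k/g$ to a matrix $U\in SL_n(\Z)$ whose first row is $k/g$. Then $UM'$ has first row divisible by $m$, and therefore
\[
\det M'=\det(UM')\equiv 0\pmod m.
\]
Equivalently, the columns lie in the lattice $\{w\in\Z^n: k\cdot w\equiv 0 \bmod m\}$, which has index $m$ because $k$ is primitive modulo every prime dividing $m$. The only subtlety is this use of $\gcd(m,k_0,\dots,k_{n-1})=1$. All implicit constants are $1/n!$ or $1/(n-1)!$.
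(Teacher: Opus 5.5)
Your proof is correct and follows the paper's argument: the first part uses the same homogeneous-coordinate determinant and integrality, and the second part is likewise reduced to showing $m \mid \det M'$. The only difference is in that divisibility step: the paper works one prime power $p^r \| m$ at a time, inverting some $k_s$ modulo $p^r$ and row-reducing, whereas you do it in one step by completing the primitive vector $k/g$ to a matrix in $SL_n(\Z)$. Both versions use $\gcd(m,k_0,\dots,k_{n-1})=1$ in the same way, and the degenerate case $k=0$, where $g$ is undefined, is trivial because it forces $m=1$.
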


The combination of the two ideas is powerful enough that we immediately obtain the following result for planar curves,  which despite its short and direct proof significantly improves on all of the three papers \cite{huangasymp,  beresnevich-dickinson-velani,  vaughan-velani}.  

To state it,  fix an interval $I \subset \R$ and a twice continuously differentiable function $f \colon I \to \R$ with $f^{\prime \prime}(x) \geq c>0$ for all $x \in I$,  and for any sub-interval $J \subset I$ let
\[
\mathcal{N}(\delta, Q, J):=\#\left\{(a, b, q) \in \Z^3,  \frac{1}{2} Q \leq q \leq Q: \; \frac{a}{q} \in J,  \; \left|f\left(\frac{a}{q}\right)-\frac{b}{q}\right| \leq \frac{\delta}{Q} \right\}
\]
count the rational points near the curve segment $(x, f(x))$,  $x \in J$,  and write $\mathcal{N}(\delta, Q):=\mathcal{N}(\delta, Q, I)$.
\begin{theorem} \label{thm::R2curves}
There is a constant $C$,  depending only on $f$,  such that for all $Q \geq 2$ and all $\delta \geq C Q^{-1}(\log Q)^2$,
\[
\mathcal{N}(\delta, Q)=\frac{7}{12}\,\delta Q^2 |I| \left(1+O\left(\frac{(\log Q)^{1/2}}{(\delta Q)^{1/4}}\right)\right).
\]
Moreover,  $\mathcal{N}(\delta, Q) \lsim \delta Q^2+Q(\log Q)^2$ for all $\delta>0$.  The implicit constants depend only on $f$.
\end{theorem}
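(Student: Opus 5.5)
We prove Theorem~\ref{thm::R2curves} by combining the box duality (Theorem~\ref{thm::maindualitybox} in the case $n=2$) with the determinant method (Lemma~\ref{lem::ratrepellancy}) on the dual curve.

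\textbf{Decomposition.} Set $\eps=\delta/Q$. Cover the $\eps$-neighbourhood of the graph by $N\asymp \eps^{-1/2}$ boxes $V_j$ of dimensions $\sqrt{\eps}/L\times\eps$, tilted along the tangent at $x_j$. Here $L\ge 1$ is a parameter chosen later. By Taylor's theorem with $f\in C^2$ and $f''\ge c$, the tangent boxes match the neighbourhood up to boundary layers, and we count these layers as a separate error. Each box has $\eps_1\eps_2=\eps^{3/2}/L$. The density hypothesis $\eps_1\eps_2\ge CQ^{-3}$ then reads $\delta^{3/2}Q^{3/2}\ge CL$, which holds in the stated range for moderate $L$.

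\textbf{Main term via equidistribution.} For a box $V_j$ whose dual box contains no rational point of small height, the last assertion of Theorem~\ref{thm::maindualitybox} gives only a lower bound. For the asymptotic we need the count in ``good'' boxes to equal $\tfrac{7}{12}\cdot(\text{area})\cdot Q^3$ with a relative error. The constant $\tfrac{7}{12}$ comes from $\sum_{Q/2\le q\le Q} q^2/Q^3$ in the unreduced count. The natural way is to apply the duality statement to sub-boxes and their translates. Partition $V_j$ into a grid of smaller boxes, all still in the dense regime. Each sub-box has count at most $(1+A^{-1})$ times the volume prediction unless its dual box contains a rational $\vec a/m$ with $\widetilde Q/(Bm)\gsim A$. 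The complementary lower bound follows by the same argument applied to the complement inside a slightly larger box. This is the step whose quantitative form produces the error $(\log Q)^{1/2}(\delta Q)^{-1/4}$: we balance $A$ and $L$ against the exceptional mass. I expect it to be the main obstacle, namely turning the qualitative ``$\gsim$'' of Theorem~\ref{thm::maindualitybox} into a $1+o(1)$ statement. My first attempt would be a two-scale comparison, where each box's count is compared with the averaged count over the $\asymp$ translates along the curve.

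\textbf{Bad boxes via the determinant method.} The dual boxes $\widetilde V_j$ cover the neighbourhood of the dual curve $\gamma^\ast$, which is again convex since $f''\ge c$. They have sizes $\widetilde\eps_1=\eps_2=\eps$ and $\widetilde\eps_2=\eps_2/\eps_1\asymp\sqrt{\eps}L$, at height $\widetilde Q=1/(\eps Q)=1/\delta$. Call $V_j$ bad at level $(A,B,m)$ if $(\widetilde V_j)_B$ contains a point $\vec a/m$ with $\widetilde Q/(Bm)\gsim A$. Dyadically decompose in $m$ and $B$. By Lemma~\ref{lem::ratrepellancy}, any three such points that are not collinear span a triangle of area $\gsim m^{-3}$. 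Three points lying in nearby enlarged dual boxes along a convex curve span area $\lsim (\text{arc length})^3$. Hence distinct bad points are separated by arc length $\gsim 1/m$, so there are $\lsim m+1$ of them, up to collinear clusters. Collinear clusters lie on a rational line that meets the convex dual curve's neighbourhood in a short segment, and the second part of Lemma~\ref{lem::ratrepellancy} handles them. Each bad dual point lies in $\lsim B$ consecutive dual boxes.

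\textbf{Summing the bad contributions.} In a bad box the primal points lie on a rational line $H$ (the case $r=1$). That line meets $V_j$ in a segment, so the count there is at most $\lsim Q\sqrt{\eps}/L\cdot(\text{density on }H)+1$. Summing over the dyadic levels $m\le\widetilde Q/(AB)$ gives a total excess of $\lsim Q(\log Q)^2$; the two logarithms come from the dyadic ranges of $m$ and $B$. This yields the universal bound $\mathcal N\lsim\delta Q^2+Q(\log Q)^2$. When $\delta\ge CQ^{-1}(\log Q)^2$, the bad boxes form a negligible proportion, and the main-term step gives the asymptotic.
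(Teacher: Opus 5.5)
\textbf{There is a genuine gap in the main term, essentially at the step you flag yourself.}

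Theorem \ref{thm::maindualitybox} holds only up to dimensional constants. It rests on the two-sided estimate $\#\{F\le 1\}\sim\vol(F\le 1)\prod_j\max(1/\lambda_j^*,1)$ of Theorem \ref{thm::polarcount}. So it cannot tell a count of $(1+A^{-1})$ times the prediction apart from one of ten times the prediction. In particular, your claim that each sub-box has count at most $(1+A^{-1})$ times the volume prediction unless a dual rational is present does not follow from it. Subdividing into sub-boxes, comparing with translates, or counting in a complement only recycles constant-factor information. Shrinking boxes also only increases the successive minima. Neither half of the asymptotic is actually obtained, and $\frac{7}{12}$ is identified only heuristically.

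The missing ingredient is the quantitative lattice count in Proposition \ref{prop::R2structure}(i):
\begin{itemize}
\item Take $J_j$ of length $\sqrt{\delta\eta/Q}$, which is your $L=\eta^{-1/2}$. Then the $C^2$ Taylor error is $O(\eta\delta/Q)$.
\item The map $(a,b,q)\mapsto\bigl((a-x_jq)/\sqrt{\eta\delta Q},\,(b-af'(x_j)-q[f(x_j)-x_jf'(x_j)])/\delta,\,q/Q\bigr)$ turns $\mathcal{N}(\delta,Q,J_j)$ into a count of points of the image lattice $\Lambda$ of $\Z^3$. The region counted agrees with a fixed truncated cone of volume $\frac{7}{12}$ outside an $O(\eta)$ boundary layer.
\item If $\lambda_3(F_j)\le\eta$, then $\Lambda$ has a fundamental domain of diameter $O(\eta)$. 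This gives $\frac{7}{12}\delta Q^2|J_j|(1+O(\eta))$ as both an upper and a lower bound (the last assertion of Theorem \ref{thm::polarcount}).
\end{itemize}

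\textbf{The exceptional set must then be controlled at the enlarged tolerance.} By Mahler, $\lambda_3(F_j)>\eta$ forces a primitive $(n,m,k)$ with $F_j^*(n,m,k)\lsim\eta^{-1}$. This is a dual rational of height $m\le\delta^{-1}\eta^{-1}$ in an $\eta^{-1}$-enlarged neighbourhood of the dual curve, and it spoils an $x$-interval of length $(\eta mQ)^{-1/2}$. You need $O(M)$ such rationals per dyadic $M$ to get $|E|\lsim\eta^{-1}(\delta Q)^{-1/2}$. After that, $\eta=(C(\log Q)^2/(\delta Q))^{1/4}$ gives the stated error. The paper gets this count from the sparse Theorem \ref{thm::sparsecurve}, via its density increment.

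Your determinant-method count does not supply it, because the collinear case is not handled. The second part of Lemma \ref{lem::ratrepellancy} only gives spacing $k/m^2$ along a dual line of height $k$. So a dual line of small height can carry many height-$m$ points inside the thickened dual curve, and ``$\lsim m+1$ points'' is not established.

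For the upper bound alone you can avoid this by counting dual boxes instead of dual points: a dual line meets only $O(1)$ unenlarged dual boxes, as in the introduction's sketch. But the bookkeeping there is also off, because the line excess is not $\lsim Q(\log Q)^2$. For $f(x)=x^2$ and $\delta=Q^{-1/2}$, the line $y=0$ alone carries $\sim Q^{5/4}$ points near the curve, against $\sim Q^{3/4}$ expected in a $\sqrt{\eps}\times\eps$ box. The correct size is $\lsim\delta^{1/2}Q^{3/2}\log Q$, as in Proposition \ref{prop::R2structure}(ii). The bound $\mathcal{N}\lsim\delta Q^2+Q(\log Q)^2$ then follows from $\delta^{1/2}Q^{3/2}\log Q\le\delta Q^2+Q(\log Q)^2$. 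For $\delta\lsim Q^{-1}$, where your density hypothesis fails, it follows from monotonicity in $\delta$.
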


\begin{proof}[Sketch of Proof]
Write $\eps:=\delta/Q$.  In this sketch,  we only consider lines whose intersection with the $\eps$-neighbourhood of the curve has the maximal possible length $\sqrt{\eps}$; lines at a bigger angle to the curve can be treated similarly.  After applying Theorem \ref{thm::maindualitybox},  the goal is to bound the number of boxes $\widetilde{V}_i$ of length $\sqrt{\eps}$ around the dual curve containing a rational of height $m$,  with $m \lsim \delta^{-1}$.   Let $c_0$ be a small number and $T$ be the smallest integer such that for $T$ many adjacent boxes $\widetilde{V}_{i_0},  \dots,  \widetilde{V}_{i_0+T}$,
\[
\vol(\conv(\widetilde{V}_{i_0},  \dots,  \widetilde{V}_{i_0+T}))=(T \sqrt{\eps})^3 \geq c_0 m^{-3},
\]
meaning that 
\[
T \gsim \frac{1}{\sqrt{\eps} m}.
\]
By the determinant method (Lemma \ref{lem::ratrepellancy}),  all of the rationals up to height $m$ contained within $\conv(\widetilde{V}_{i_0},  \dots,  \widetilde{V}_{i_0+T})$ lie on a line, but any line can intersect at most $O(1)$ many boxes because of the curvature.  Hence,  there is at most a 
\[
\frac{1}{T} \lsim \sqrt{\eps} m \lsim (\delta Q)^{-1/2}
\] 
proportion of the boxes that contain a rational,  and the total contribution of all of the lines corresponding to these rationals is at most
\[
\frac{1}{T} \frac{Q^2}{m} \lsim \sqrt{\eps} Q^2 \lsim \delta Q^2 (\delta Q)^{-1/2}.
\]
Picking $\delta$ big enough in $Q$ to beat the log terms from the dyadic partitioning shows the claim. 
\end{proof}

Finally,  we present our results in the case of curves in $\R^3$.  As mentioned above,  the result below was independently obtained in \cite{chen-seeger-srivastava-technau} under stronger curvature assumptions and with a significantly less direct proof.  For a non-degenerate curve $\gamma : [0, 1] \to \R^3$,  let $\mathcal{N}(\delta, Q)$ denote the number of $(\vec{a}, q) \in \Z^3 \times \Z$ with $Q/2 \leq q \leq Q$ and $\vec{a}/q \in B_{\delta/Q}(\gamma)$.
\begin{theorem} \label{thm::strictcount}
Let $\gamma : [0, 1] \to \R^3$ be a non-degenerate curve.  For all $\delta \geq Q^{-1/3}$,
\[
\mathcal{N}(\delta, Q) \lsim \delta^2 Q^2.
\]
\end{theorem}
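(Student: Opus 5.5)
Set $\eps:=\delta/Q$ and cover $B_\eps(\gamma)$ by the minimal cover of adapted boxes from Theorem \ref{thm::mainduality}, with $\eps_1=\eps_2\cdots$ chosen as follows. For a non-degenerate space curve, the $\eps$-neighbourhood is tiled by $\eps^{1/3}\times \eps\times\eps$ pieces along the tangent direction, up to constants and after regrouping. To match the duality theorem, I take the adapted box with $\eps_1=\eps^{1/3}$ along the tangent and $\eps_2=\eps_3=\eps$. Then $N\approx \eps^{-1/3}$ boxes cover the curve, and $\eps_1\eps_2\eps_3=\eps^{7/3}$. The condition $\eps^{7/3}\geq CQ^{-4}$ reads $\delta^{7/3}Q^{5/3}\gsim 1$, which holds comfortably for $\delta\geq Q^{-1/3}$.

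By the last part of Theorem \ref{thm::maindualitybox} and Theorem \ref{thm::mainduality}, each box contributes $\lsim \eps^{7/3}Q^4$ unless it is an excess box with parameter $A$. Summing the non-excess boxes over the $\eps^{-1/3}$ boxes gives $\eps^2Q^4=\delta^2Q^2$. For the excess boxes I decompose dyadically in $A$. The rationals of such a box lie in $H_1\cap\dots\cap H_r$ with $r\in\{1,2\}$, that is, on a rational plane or line. The dual data are rationals $\vec a_i/m_i$ in enlarged dual boxes $(\widetilde V_j)_{B_i}$ around $\gamma^\ast$ with $\widetilde Q=(\eps Q)^{-1}=\delta^{-1}$ and $\prod \widetilde Q/(B_im_i)\gsim A$. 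The dual box sides are $\widetilde\eps_1=\eps/\eps_2=1$, $\widetilde\eps_2=\eps/\eps_1=\eps^{2/3}$, $\widetilde\eps_3=\eps$; I should double check the ordering convention, but in any case they are of the form adapted to the dual curve.

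Next I bound the number of adapted dual boxes that contain a rational of height $m$, using Lemma \ref{lem::ratrepellancy} exactly as in the sketch of Theorem \ref{thm::R2curves}. Consider $T$ consecutive dual boxes. The convex hull of their union has volume comparable to a power of $T\eps^{1/3}$ times $\widetilde\eps$'s, by non-degeneracy of $\gamma^\ast$. Once this volume exceeds $c_0m^{-4}$, all rationals of height $\le m$ in them lie in a common plane. A plane meets only $O(1)$ consecutive blocks of adapted boxes of a non-degenerate curve, so the proportion of boxes carrying such a rational is $\lsim 1/T$. Within that plane, the second part of Lemma \ref{lem::ratrepellancy} lets me iterate once more: the points lie on a line, and a line meets only $O(1)$ boxes. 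Then I count. The primal rationals of an excess box on a line (case $r=2$) number at most about $\eps^{1/3}Q^2/m$ for a line of height $m$, and on a plane (case $r=1$) at most about $\eps^{1/3}\eps Q^3/m$. I multiply by the number of boxes carrying dual rationals, namely $\eps^{-1/3}\cdot T^{-1}$ summed over $B_i,m_i$, and I want the total to be $\lsim\delta^2Q^2$.

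The main obstacle is the line case $r=2$ with small $m$. A line segment of length $\eps^{1/3}$ can hold about $\eps^{1/2}Q^2$ points (the sharp example), and this must be $\le\delta^2Q^2$. That forces exactly $\delta\geq Q^{-1/3}$, so no logarithmic room is left. I would handle it by noting that the extremal lines are precisely tangent-aligned, which correspond to dual points on a $2$-dimensional family. I would then sum over $m$ geometrically, using the gain $\prod \widetilde Q/(B_im_i)\gsim A$ to absorb the dyadic sums in $A$, $B_i$ and $m_i$. I expect the bookkeeping of the volumes of convex hulls of consecutive adapted dual boxes, which requires non-degeneracy of $\gamma^\ast$, to be routine but tedious.
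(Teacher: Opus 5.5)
There is a genuine gap, and it starts with the scale of the linearisation.

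\textbf{The box dimensions are wrong.} A non-degenerate space curve leaves the $\eps$-neighbourhood of its tangent line after length $\sqrt{\eps}$, because the $e_2$-component grows like $s^2$. The length $\eps^{1/3}$ is how long the curve stays $\eps$-close to its osculating plane, not its tangent line. So the correct pieces are $\sqrt{\eps}\times\eps\times\eps$, with $N\sim\eps^{-1/2}$ boxes; this is the cover the paper uses. With your choice $\eps_1=\eps^{1/3}$, $\eps_2=\eps_3=\eps$, there are two possibilities, and both fail:
\begin{itemize}
\item If you keep $N\approx\eps^{-1/3}$ boxes, they do not cover $B_\eps(\gamma)$: each contains only a $\sqrt{\eps}$-stretch of the curve.
\item If you use the actual cover of Lemma \ref{lem::minimalcover}, its step size is $\Delta=\eps^{2/3}$. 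The $\sim\eps^{-2/3}$ boxes then give, already for the non-excess boxes, $\eps^{5/3}Q^4=\delta^2Q^2(Q/\delta)^{1/3}$ rather than $\delta^2Q^2$.
\end{itemize}
Your own sharp example, $\eps^{1/2}Q^2$ points on a tangent segment, is also inconsistent with segments of length $\eps^{1/3}$. With the correct cover, the dual boxes are $1\times\sqrt{\eps}\times\eps$ with $\widetilde Q=\delta^{-1}$, and Theorem \ref{thm::mainduality} applies for $\delta\gsim Q^{-3/5}$.

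\textbf{The key dual-counting step is false in $\R^3$.} Your count rests on the claim that a plane (or a line) meets only $O(1)$ of the adapted dual boxes, so that the proportion of boxes containing an $m$-rational is $\lsim 1/T$. The dual boxes have length $\widetilde{\eps_1}=1$ along the tangent of $\gamma^\ast$. Their union is therefore a thin neighbourhood of the tangent developable of $\gamma^\ast$. Any plane through $\gamma^\ast(t_0)$ transverse to the tangent directions meets every dual box with $|t-t_0|\le c$, that is, $\sim\eps^{-1/2}$ of them. Lines near tangent lines of $\gamma^\ast$ likewise meet many of them. So the planar heuristic does not transfer, and replacing it is the bulk of the paper's proof.

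In the plane case, the paper:
\begin{itemize}
\item locates the dual rational inside the long dual box by its Frenet coordinates $(\lambda_0,\theta_0)$, which are the angles of $H$ to $\gamma''$ and $\gamma'$ and fix the intersection length $\sqrt{\lambda_0^{-1}\delta/Q}/B_0$;
\item works with the smaller pieces $W_i$, and bounds the generic case via Lemma \ref{lem::Rnvolnghbhd} and the determinant method;
\item treats separately the dual planes $L$ containing all the $m_0$-rationals, that is, excess planes $H_j$ all passing through the rational point $L^\ast$. Here it uses Remark \ref{rem::rathyperorthrepellancy}, Lemma \ref{lem::distplanetocurve} and the planar theorem inside $L$.
\end{itemize}

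In the line case, the paper:
\begin{itemize}
\item applies the determinant method to four rationals in the convex hull of \emph{two} dual boxes (Lemma \ref{lem::R3volgain}, Proposition \ref{prop::R3linecase}). This gives at most $\max(\sqrt M, M\sqrt{\eps'})$ boxes with two dual rationals of product height $\sim M$, where $\eps'=B\sqrt{\delta/Q}$;
\item handles pairwise coplanar lines separately (all in one plane, or all through one point);
\item sums $\frac{\delta^{-2}}{MB}Q^4\eps^{5/2}\sqrt M$ dyadically to get $Q^{(3-\tau)/2}$. This is exactly where $\tau\le 1/3$ enters.
\end{itemize}

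Your proposal has no substitute for either mechanism. Observing that tangent-aligned lines correspond to a two-dimensional family of dual points does not yield a count.
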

As noted before,  this is sharp because the total count gets dominated by the contribution of short line sections for $\tau>1/3$.  However,  as mentioned above,  it still makes sense to study the structure of the set of rationals up to height $Q$,  as for example encoded in the Hausdorff dimension.  With our method,  this seems viable up to the linearisation threshold at which $\sqrt{\eps} \eps^{n-1}$ is of order $Q^{-(n+1)}$ (the range conjectured by Beresnevich and Yang \cite{beresnevich-yang}),  up to which we can study the problem by solving the dual sparse counting problem.


\subsection{Structure of this paper}
The preparatory Sections \ref{sec::geomnumbers} and \ref{sec::detmethod} introduce the counting result that is the basis for the proof of Theorem \ref{thm::maindualitybox} and the determinant method lemma we frequently use. In Section \ref{sec::R2curves},  the results are combined with the Legendre dual to give sharp counting results for planar curves. Section \ref{sec::projduality} introduces the background in projective duality we need,  which is then used in Sections \ref{sec::dualboxes} and \ref{sec::dualcurves} to establish our duality principles. Section \ref{sec::geomlemmata} contains geometric lemmata needed in Section \ref{sec::R3curves},  and in Section \ref{sec::R3curves} we then combine the main duality result,  the determinant method and the geometric lemmata from the preceding section to show the sharp bound for space curves. 
\subsection{AI disclaimer and reflections}
This work was done without the use of AI except at the very end,  at which point Claude (Opus 5.5) was used for editing,  structuring and double-checking the proofs,  and for writing out some routine arguments I had left for later (like Lemma \ref{lem::Rnvolnghbhd}).  I have omitted to fix imprecisions in the proofs (like whether we are counting each rational point with multiplicity or not) in order to keep this paper as human as possible.

To be as transparent as possible,  I will give the full timeline:
I found the first proof for Theorem \ref{thm::R2curves} last year,  and by April of this year my understanding had matured to the streamlined proof in the introduction and an understanding that the ideas of the proof would apply to more general manifolds.  I talked about this at the British Mathematical Colloquium in April 2026.  I then proved Theorem \ref{thm::maindualitybox} and,  realising that there would be a way to prove Theorem \ref{thm::strictcount},  communicated this to the authors of \cite{chen-seeger-srivastava-technau} in May 2026.  

With the benefit of what was to come,  I should of course have wrapped up the project then and put the paper out in parallel to them,  but at the time,  it seemed sensible to try to avoid what would have amounted to a clean scoop.  To broaden the scope of this paper,  I pushed against the $\R^3$-case of the Beresnevich--Yang conjecture \cite[Conjecture 2.7]{beresnevich-yang} and the case of surfaces without positive Gaussian curvature in $\R^3$,  for both of which I saw a viable strategy and which I believe would have succeeded with a little more time.  While working through this,  I was overtaken by the speed of events and once AI models became as powerful as they are now,  I had to settle for pushing out what I had in hand when I could. I chose not to use AI models to help me bridge the gaps, but will leave this to a separate follow-up paper. 

I'll now reflect on the role I see for this paper and papers of its kind in the emerging mathematical universe dominated by AI.  While the results of this paper can be easily superseded with the powerful AI tools now available,  so can those of pretty much any current paper.  However,  I believe that there is value in this paper beyond this: It takes what were considered opaque papers and put them into a form that is both pretty and intuitive to understand.  Given the slope of improvement of the AI systems and the undoubtedly forthcoming ocean of proofs,  I believe that occasions to further the intuitive human understanding of theorems and proofs like in this paper will be one of the main motivators of research in pure mathematics.  In that sense,  even though the results are not what I envisioned completing two months ago,  I am proud of what I believe is the clear and intuitive point of view this paper transmits.  

\subsection{Acknowledgements}
I am grateful to Victor Beresnevich and the group in York for inviting me to the visit that sparked my exploration of this topic last year,  even though I had never worked in their area before.  I am also grateful to Jonathan Hickman for helpful conversations and his encouragement.  Lastly,  my thanks go to Rajula Srivastava and Christoph Thiele,  without whose broad interests I never would have learned about this problem.  

\section{The geometry of numbers} \label{sec::geomnumbers}
In this section,  we use several classical results from the geometry of numbers (see \cite[Chapter VIII]{cassels} for an introduction) and combine them to show a counting result that will be an important facet of our duality. 

We write $f \lsim g$ for quantities $f, g$ if there is a constant $C$ such that $f \leq C g$, here and everywhere else in the paper.  We write $f \sim g$ if $f \lsim g$ and $g \lsim f$.

For a convex distance function $F$, we write $F^*$ for the polar convex distance function defined by
\[
F^*(y):=\max_{x \in \R^n: F(x)=1} |\langle x, y \rangle|.
\]
When we speak of successive minima $\lambda_i$ of $F$ or the successive minima $\lambda_i^*$ of $F^*$ below, we always mean with respect to the lattice $\Z^n$.

The following theorem is a corollary of classical results in the geometry of numbers.  

\begin{theorem} \label{thm::polarcount}
Let $F$ be a symmetric convex distance function on $\R^n$ and let $\lambda_j$ be the successive minima of $F$ with respect to the lattice $\Z^n$.  Denote the successive minima of the lattice $\Z^n$ with respect to the polar convex distance function $F^*$ by $\lambda_i^*$. Then,
\[
\#\{ v \in \Z^n: F(v) \leq 1\} \sim \vol(F \leq 1 ) \prod_{j=1}^n \max\left\{\lambda_j,1 \right\} \sim \vol(F \leq 1 ) \prod_{j=1}^n \max\left\{\frac{1}{\lambda_j^*},1 \right\},
\]
where the implicit constants depend only on the dimension $n$. Moreover, if $\lambda_j^* \geq \eta^{-1}$ for all $j$,  then 
\[
\#\{ v \in \Z^n: F(v) \leq 1\}=(1+O(\eta)) \; \vol(F \leq 1 ).
\]
\end{theorem}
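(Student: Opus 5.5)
The plan is to combine three classical ingredients: the lattice point count in terms of successive minima (Blichfeldt/Davenport type), Minkowski's second theorem, and Mahler's transference theorem relating $\lambda_j$ and $\lambda_j^*$. Write $K=\{F\le 1\}$. By Minkowski's second theorem, $\lambda_1\cdots\lambda_n\vol(K)\sim 1$.

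\emph{First equivalence.} I would invoke the classical estimate
\[
\#(\Z^n\cap K)\sim \prod_{j=1}^n \max\{1/\lambda_j,1\},
\]
which holds with constants depending only on $n$. For the upper bound, let $k$ be the largest index with $\lambda_k\le 1$. Choose linearly independent lattice vectors $w_1,\dots,w_n$ realising the minima. The points of $\Z^n\cap K$ lie in the span of $w_1,\dots,w_k$, since $\lambda_{k+1}>1$. Counting them in this $k$-dimensional sublattice by a packing argument with translates of $\tfrac12 K$ gives at most $\lsim\prod_{j\le k}\lambda_j^{-1}$ points. For the lower bound, all the combinations $\sum_{j\le k} c_j w_j$ with integer $|c_j|\le 1/(n\lambda_j)$ lie in $K$, which again gives $\gsim\prod_{j\le k}\lambda_j^{-1}$ points.

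Dividing by $\vol(K)\sim(\lambda_1\cdots\lambda_n)^{-1}$ yields
\[
\frac{\#(\Z^n\cap K)}{\vol(K)}\sim\prod_{j>k}\lambda_j\prod_{j\le k}1=\prod_j\max\{\lambda_j,1\}.
\]
This is the first $\sim$ in the statement.

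\emph{Second equivalence.} This follows from Mahler's theorem $1\le\lambda_j\lambda^*_{n+1-j}\lsim 1$ for the polar pair with respect to $\Z^n$, which is self-dual. It gives $\max\{\lambda_j,1\}\sim\max\{1/\lambda^*_{n+1-j},1\}$, and reindexing the product finishes this part.

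\emph{Asymptotic statement.} I would use a smoothed or Fourier-free covering argument. The hypothesis $\lambda^*_j\ge\eta^{-1}$ together with transference gives $\lambda_n\lsim\eta$. Hence there is a basis of $\Z^n$ whose fundamental parallelepiped $P$ satisfies $P\subset C\eta K$; such a basis exists by reduction, with length $\lsim\lambda_n$ in $F$-norm. Tiling by the translates $v+P$ for $v\in\Z^n$, the union of tiles with $v\in K$ contains $(1-C\eta)K$ and is contained in $(1+C\eta)K$, using convexity and symmetry of $F$. Comparing volumes gives
\[
\#(\Z^n\cap K)=\vol(K)\bigl(1+O(\eta)\bigr),
\]
since $\vol((1\pm C\eta)K)=(1\pm C\eta)^n\vol(K)$. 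When $\eta$ is not small the claim already follows from the first part.

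The main obstacle is keeping all constants dependent only on $n$. The delicate points are the choice of a lattice basis with $F(b_i)\lsim\lambda_n$, for which I would use the Mahler–Weyl basis lemma, and the upper bound in the sublattice count when the minima are mixed, some below $1$ and some above.
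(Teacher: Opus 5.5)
Your proposal is correct and follows essentially the paper's route: Mahler's transference for the second equivalence, Minkowski's second theorem plus restriction to the sublattice spanned by the minima $\le 1$ for the first, and a tiling comparison for the asymptotic (your reduced-basis parallelepiped $P\subset C\eta K$ is in fact more careful than the paper's unit cubes, whose $F$-diameter need not be small). The one step to tighten is the sublattice upper bound: translates of $\tfrac12 K$ centred at points of $\Z^n\cap K$ overlap once $\lambda_1<1$, so instead tile $\mathrm{span}(w_1,\dots,w_k)$ by the fundamental parallelepiped of a reduced basis of $\Z^n\cap\mathrm{span}(w_1,\dots,w_k)$, which lies in $CK$ because $\lambda_k\le 1$ (this is what the paper's auxiliary function $G$ achieves), and then apply Minkowski's second theorem in that span.
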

\begin{proof}
By Mahler's transference theorem \cite{mahler} (see also \cite[Chapter VIII]{cassels}),
\[
\lambda_j \sim (\lambda_{n-j+1}^*)^{-1},
\]
so the claim is equivalent to showing
\begin{equation} \label{eq::latticeest}
\#\{ v \in \Z^n: F(v) \leq 1\} \sim \vol(F \leq 1 ) \prod_{j=1}^n \max\left\{\lambda_j,1 \right\}.
\end{equation}
We first show (\ref{eq::latticeest}) in the case that $\lambda_j \leq 1$ for all $j$. In this case,  $\{F \leq 1\}$ contains a full fundamental domain of $\Z^n$ of volume one and is tiled by its translates. As each translate of the fundamental domain contains one point on average, the result follows.

In the case that $\lambda_j>1$ for some $j$,  we note that by Minkowski's second theorem \cite{minkowski} (see also \cite[Chapter VIII]{cassels}),
\[
\vol(F \leq 1 ) \prod_{j=1}^n \max\left\{\lambda_j,1 \right\} \sim \prod_{j=1}^n \max\left\{\lambda^{-1}_j,1 \right\}.
\]
By Lemma 2 in VIII.1.2 in \cite{cassels}, there is a basis $v_1, \dots, v_n \in \Z^n$ with $F(v_i) \sim \lambda_i$ such that any $v \in \Z^n \cap \{F \leq 1\}$ lies in $v_1 \Z+ \dots+v_r \Z$, where $r$ is chosen such that $\lambda_r \leq 1 < \lambda_{r+1}$.  We reduce to $\R^r \cong \Span(v_1, \dots, v_r)$ by defining the symmetric convex distance function $G$ on $\R^r$ as
\[
G(x):=F(x_1 v_1+ \dots+x_r v_r).
\] 
By the property of the $v_i$ find that showing (\ref{eq::latticeest}) reduces to  establishing
\[
\#\{ b \in \Z^r: G(b)\leq 1\} \sim \prod_{j=1}^r \lambda^{-1}_j= \prod_{j=1}^n \max\left\{\lambda^{-1}_j,1 \right\}.
\]
But by definition of $v_i$ and because $j \leq r$, the successive minima $\lambda_j^\prime$ of  $G$ with respect to $\Z^r$ satisfy $\lambda_j^\prime \sim F(v_j) \sim \lambda_j$.  By this and Minkowski's second theorem,
\[
\prod_{j=1}^r \lambda^{-1}_j \sim \prod_{j=1}^r (\lambda^\prime_j)^{-1} \sim \vol(G \leq 1),
\]
so we have reduced showing (\ref{eq::latticeest}) for $F$ to showing (\ref{eq::latticeest}) for $G$. But as $\lambda_j^\prime \lsim 1$ for all $j$, this follows from the case we already established.

Lastly,  the condition $\lambda_j^* \geq \eta^{-1}$ implies that $\lambda_j \leq C \eta$ for some $C$.  Thus,  any point $v \in \Z^n$ such that one of the cubes adjacent to $v$ partially intersects $\{F \leq 1\}$ must have $|F(v)-1| \leq \sqrt{n} C \eta$.  Consequently,
\[
\vol(F \leq 1-\sqrt{n} C\eta) \leq \#\{ v \in \Z^n: F(v) \leq 1\} \leq \vol(F \leq 1+\sqrt{n} C\eta),
\]
which implies the claim because 
\[
\vol(F \leq 1 \pm \sqrt{n}C\eta)=(1 \pm \sqrt{n} C\eta)^n \vol(F \leq 1)=(1+O(\eta)) \; \vol(F \leq 1).
\]
\end{proof}

Finally, to get the most mileage out of Theorem \ref{thm::polarcount},  the following Lemma is useful.

\begin{lemma} \label{lem::polardist}
Let $v_1, \dots, v_n$ be a basis of $\R^n$,  let $t_i \in \R_{>0}$ for $1 \leq i \leq n$ and let $F$ be the symmetric convex distance function defined by
\[
F(x)=\max_i|t_i \langle v_i, x \rangle|.
\]
For $y \in \R^n$,  let $\rho_j(y)$ be the real numbers such that 
\[
y=\sum_{j=1}^n \rho_j(y) v_j.
\]

Then, 
\[
F^*(y)=\sum_i \frac{1}{t_i}\left| \rho_i(y)\right|.
\]
\end{lemma}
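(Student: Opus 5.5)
The plan is to compute $F^*$ directly from its definition by expanding $x$ in the dual basis of $v_1,\dots,v_n$. Let $w_1,\dots,w_n$ be the dual basis, so that $\langle v_i, w_j\rangle=\delta_{ij}$. Any $x\in\R^n$ can be written as $x=\sum_i s_i w_i$ with $s_i=\langle v_i,x\rangle$. In these coordinates $F(x)=\max_i t_i|s_i|$, so the unit ball $\{F\le 1\}$ is the parallelepiped $\{\sum_i s_i w_i: |s_i|\le 1/t_i\}$.

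Next, expand $y=\sum_j\rho_j(y)v_j$. Then
\[
\langle x,y\rangle=\sum_{i,j} s_i\rho_j(y)\langle w_i,v_j\rangle=\sum_i s_i\rho_i(y).
\]
The pairing is therefore diagonal in these coordinates. This is the whole point of choosing the dual basis, and it is the only step that needs any care.

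It remains to maximise $|\sum_i s_i\rho_i(y)|$ over the parallelepiped. Since $F$ is a convex distance function, maximising over $F(x)=1$ is the same as maximising over $F(x)\le 1$: the objective is homogeneous, and for $y\neq0$ the maximum over the convex body is attained on its boundary. For the upper bound, the triangle inequality gives
\[
\Bigl|\sum_i s_i\rho_i(y)\Bigr|\le \sum_i |s_i||\rho_i(y)|\le \sum_i \frac{1}{t_i}|\rho_i(y)|.
\]
For equality, take the vertex $s_i=\operatorname{sign}(\rho_i(y))/t_i$, choosing $s_i=1/t_i$ when $\rho_i(y)=0$. At this vertex every $|s_i|=1/t_i$, so $F(x)=1$, and the triangle-inequality bound is attained. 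The case $y=0$ is trivial, since both sides vanish.

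There is no real obstacle here. The one thing to get right is identifying the polar body of a parallelepiped written in the basis $\{w_i\}$ as the cross-polytope written in the dual basis $\{v_i\}$, which the dual-basis bookkeeping above does.
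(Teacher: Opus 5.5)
Your proof is correct and is essentially the paper's argument. The paper writes $F(x)=|DAx|_\infty$, with $A$ the matrix whose rows are the $v_i$, substitutes $x\mapsto A^{-1}D^{-1}x$ (the columns of $A^{-1}$ are exactly your dual basis $w_i$), and reads off $F^*(y)=|D^{-1}(A^{-1})^Ty|_1=|D^{-1}\rho(y)|_1$ from $\ell^\infty$--$\ell^1$ duality; you do the same change of coordinates in basis language and verify that duality explicitly via the triangle inequality and the extremal vertex, which the paper leaves implicit.
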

\begin{proof}
Write $A$ for the matrix with $v_i$ in the rows, $D=\mathrm{diag}(t_1, \dots, t_n)$ and denote the maximum norm by $|\cdot|_\infty$.  By the definition of the dual norm,
\begin{align*}
F^*(y)&=\max_{x \neq 0} \frac{|\langle x, y \rangle|}{|Ax|_\infty}=\max_{x \neq 0} \frac{|\langle A^{-1} D^{-1} x, y \rangle|}{|x|_\infty}\\
&=\max_{x \neq 0}\frac{\left|\langle x, D^{-1}\left(A^{-1}\right)^T y \rangle \right|}{|x|_\infty}=\left| D^{-1} \left(A^{-1}\right)^T y \right|_1.
\end{align*}
As by definition $y=A^T \rho(y)$,  the claim follows.
\end{proof}

\section{The determinant method} \label{sec::detmethod}

We now show an observation related to the density of rationals in $\R^n$ with bounded height, which will be crucial to estimating the number of rational points near manifolds.  On $\R^2$,  an observation equivalent to this fact was first established by Huxley using the determinant method and proved to be the crucial ingredient in his treatment of curves.

We now restate and prove Lemma \ref{lem::ratrepellancy} from the introduction.
\begin{restated}{Lemma \ref{lem::ratrepellancy}}
Let $\vec{p}_1, \dots, \vec{p}_{n+1} \in \Z^n$ and $q_1, \dots, q_{n+1} \in \Z_{>0}$ such that $q_i \leq Q$ for some $Q$.  Define $v_i:=\vec{p}_i/q_i \in \R^n$.  Then
\[
\vol(\conv(v_1, \dots, v_{n+1})) \gsim Q^{-(n+1)}
\]
unless there is an affine hyperplane containing all $v_1, \dots, v_{n+1}$. 

Moreover,  if $H:=\{x: k_0+k_1 x_1+\dots+k_{n-1} x_{n-1}+m x_n=0\}$ is a rational hyperplane where $k_j$ and $m$ are integers with $\gcd(m, k_0, \dots, k_{n-1})=1$ and $m>0$, then for any collection of rational vectors $(\vec{a}_1/q_1,  b_1/q_1), \dots,  (\vec{a}_n/q_n,  b_n/q_n)$ contained in $H$,  where $\vec{a}_i \in \Z^{n-1}$ and again $q_i \leq Q$,
\[
\vol_{n-1}(\conv(\vec{a}_1/q_1, \dots, \vec{a}_n/q_n)) \gsim \frac{m}{Q^{n}}
\]
unless there is a subspace of dimension $n-2$ containing all of the rational points.  The implicit constants depend only on the dimension.
\end{restated}

\begin{remark} \label{rem::rathyperorthrepellancy}
An observation that we will frequently need is that if the rational hyperplane $H$ also repels rationals not lying on it in orthogonal direction.  Indeed,  if we assume that $H$ is orientated such that $m \sim \max(|k_1|,  \dots,  |k_{n-1}|,  m)$,  then for any $\vec{a}/q$ with $\vec{a} \in \Z^{n-1}$ and $q>0$ that does not lie on $H$, 
\[
|\langle \vec{a}/q,  \vec{n}_H \rangle| \gsim \frac{1}{qm}.
\]
This is quickly seen by comparing the expression above to the defining equation for $H$ and noting that any non-zero integer has size at least one.  
\end{remark}
\begin{proof}[Proof of Lemma \ref{lem::ratrepellancy}]
We observe that
\begin{align*}
\vol(\conv(v_1, \dots, v_{n+1}))&=\vol(\conv(0, v_1-v_{n+1},\dots, v_{n}-v_{n+1}))\\
&\sim \left|\det(v_1-v_{n+1}, \dots, v_{n}-v_{n+1}) \right|
\end{align*}
and
\begin{align*}
&\det(v_1-v_{n+1}, \dots, v_{n}-v_{n+1})=\det \begin{pmatrix} 
v_1-v_{n+1} & \dots & v_{n}-v_{n+1} & v_{n+1} \\ 0 & \dots & 0 & 1 
\end{pmatrix}\\
&=\det \begin{pmatrix} 
v_1& \dots & v_{n}& v_{n+1} \\ 1 & \dots & 1 & 1 \end{pmatrix} \\
&=q_1^{-1} \dots q_{n+1}^{-1} \det \begin{pmatrix} 
\vec{p}_1 & \dots & \vec{p}_{n}& \vec{p}_{n+1}\\ q_1 & \dots & q_{n} & q_{n+1} \end{pmatrix}.
\end{align*}
As the determinant of an integer matrix is an integer and in particular exceeds one in absolute value,  the claim follows unless 
\[
\det \begin{pmatrix} 
\vec{p}_1& \dots & \vec{p}_{n}& \vec{p}_{n+1}\\ q_1 & \dots & q_{n} & q_{n+1} \end{pmatrix}=0.
\]
By the same calculation,  this happens exactly when all of the $v_i$ are contained in an affine hyperplane.

The second part of the lemma follows in exactly the same way if we can show that $m$ divides $d:=\det \begin{pmatrix} 
\vec{a}_1& \dots & \vec{a}_{n}\\ q_1 & \dots & q_{n} \end{pmatrix}$.  This follows essentially because the condition of lying in $H$ makes the rows of the matrix linearly dependent over $\Z/m\Z$. However,  because $\Z/m\Z$ may contain zero divisors,  we need to take a little care to deduce $d=0 \mo m$ from this.

Write $m=\prod_i p_i^{r_i}$ and note that the proof is complete if we can show that $p_i^{r_i} \mid d$ for all primes $p_i$.  For any vector $(\vec{a}/q,  b/q) \in H$ we get that
\[
q k_0+k_1 a_1+\dots+ k_{n-1} a_{n-1}=-m b \in m\Z.
\]
Fix $p_i$,  let $s$ be an index such that $\gcd(k_s, p_i)=1$ and let $c$ be the inverse of $k_s$ in $\Z/p_i^{r_i}\Z$.  From the equation
\[
q_l k_0 +\sum_{j=1}^{n-1} (\vec{a}_l)_j k_j=0 \mo p_i^{r_i},
\]
which holds for all $l$ because the vectors $(\vec{a}_l/q_l,  b_l/q_l)$ are contained in $H$,  we get 
\[
(\vec{a}_l)_s=-c \left( q_l k_0 +\sum_{j \neq s} (\vec{a}_l)_j k_j \right) \mo p_i^{r_i}
\]
for all $l$.  Adding a multiple of one row to another does not change the determinant of a matrix with elements in $\Z/p_i^{r_i} \Z$,   so we get that in $\Z/p_i^{r_i} \Z$,  the determinant is equal to the determinant of the matrix with the $s$-th row replaced by zeroes.  Hence,  $d=0 \mo p_i^{r_i}$. 
\end{proof}
\section{Counting rational points near curves in $\R^2$} \label{sec::R2curves}
In this section,  we count rational points near sufficiently curved planar curves.  We fix an interval $I \subset \R$ and a twice continuously differentiable function $f \colon I \to \R$ with the property that $f^{\prime \prime}(x) \geq c>0$ for all $x \in I$. Define for any sub-interval $J \subset I$ the counting function 
\[
\mathcal{N}(\delta, Q, J):=\#\left\{(a, b, q) \in \Z^3,  \frac{1}{2} Q \leq q \leq Q: \; \frac{a}{q} \in J,  \; \left|f\left(\frac{a}{q}\right)-\frac{b}{q}\right| \leq \frac{\delta}{Q} \right\}
\]
and set
\[
\mathcal{N}(\delta, Q):=\mathcal{N}(\delta, Q, I).
\]
This counts the number of rationals points near the curve segment $\gamma(x)=(x, f(x)),  x \in I$ in $\R^2$.  

We restate Theorem \ref{thm::R2curves} from the introduction,  which establishes an asymptotic formula for rational points near curves under minimal smoothness assumptions,  and in particular gives upper and lower bounds in a larger than previously known range of $\delta$.
\begin{restated}{Theorem \ref{thm::R2curves}}
There is a constant $C$,  depending only on $f$,  such that for all $Q \geq 2$ and all $\delta \geq C Q^{-1}(\log Q)^2$,
\[
\mathcal{N}(\delta, Q)=\frac{7}{12}\,\delta Q^2 |I| \left(1+O\left(\frac{(\log Q)^{1/2}}{(\delta Q)^{1/4}}\right)\right).
\]
Moreover,  $\mathcal{N}(\delta, Q) \lsim \delta Q^2+Q(\log Q)^2$ for all $\delta>0$.  The implicit constants depend only on $f$.
\end{restated}
Theorem \ref{thm::R2curves} is a consequence of the following local version,  which shows that outside a small exceptional set,  the rational points near the curve are equidistributed down to scale $\sqrt{\delta \eta/Q}$.
\begin{theorem} \label{thm::R2local}
There is a constant $C$,  depending only on $f$,  such that the following holds for all $Q \geq 2$,  $\delta \geq C Q^{-1}(\log Q)^2$ and $\eta \in [(C (\log Q)^2/(\delta Q))^{1/4},  1]$.  There is a set $E \subset I$,  which is a union of $\lsim \delta^{-1}\eta^{-1}$ intervals,  with
\[
|E| \lsim \eta^{-1}(\delta Q)^{-1/2},
\]
such that for every interval $J \subset I \backslash E$,
\[
\mathcal{N}(\delta, Q, J)=\frac{7}{12}\,\delta Q^2 |J| \,(1+O(\eta))+O\left(\delta Q^2 \sqrt{\frac{\delta \eta}{Q}}\right).
\]
\end{theorem}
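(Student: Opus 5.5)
Set $\eps:=\delta/Q$ and tile $I$ into intervals $U_i$ of length $L:=\sqrt{\delta\eta/Q}=\sqrt{\eps\eta}$. Over each $U_i$ the $\eps$-neighbourhood of the curve is, up to an error $o(\eps)$ coming from the uniform continuity of $f''$ (this is where only $C^2$ is used), a parallelogram $P_i$ of width $L$ and height $2\eps$ around the tangent line at the centre $x_i$. For each $i$ I count the triples $(a,b,q)$, $Q/2\le q\le Q$, with $(a/q,b/q)\in P_i$ as lattice points $(a,b,q)\in\Z^3$ in a convex body $F_i\le 1$. This body is the cone over $P_i$ cut to $Q/2\le q\le Q$, so it is essentially a box with sides $\sim Q,\ LQ,\ \eps Q=\delta$. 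Its volume gives the main term: integrating $\vol\{(a,b):(a/q,b/q)\in P_i\}=2\eps L q^2$ over $q$ yields $\tfrac{7}{12}\cdot 2\eps Q^3 L$ after rescaling. I will double check that the normalisation $\tfrac{7}{12}\,\delta Q^2|J|$ comes out of $\int_{Q/2}^{Q}q^2\,dq=\tfrac{7}{24}Q^3$ with the factor $2$ from $|f-b/q|\le\delta/Q$.

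By Theorem \ref{thm::polarcount}, the count equals $\vol(F_i\le1)(1+O(\eta))$ as soon as all polar minima satisfy $\lambda_j^*(F_i)\ge\eta^{-1}$. Lemma \ref{lem::polardist} makes $F_i^*$ explicit. Writing a dual vector as $(k_1,k_2,m)\in\Z^3$, the condition $F_i^*(k_1,k_2,m)\le \eta^{-1}$ says that the rational line $k_1x+k_2y+m=0$ (up to renormalisation) makes angle $\lsim \eta^{-1}/(LQ)$ with the tangent and passes within $\lsim \eta^{-1}/Q$ of the curve, with $|k_2|\lsim \eta^{-1}/\delta$. Projectively, this says the dual point $(-k_1/k_2,-m/k_2)$, a rational of height $\lsim \eta^{-1}\delta^{-1}$, lies in a box of size $\sim \eta^{-1}/(LQ|k_2|)\times \eta^{-1}/(Q|k_2|)$ around the Legendre dual point $(f'(x_i),\,x_if'(x_i)-f(x_i))$. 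This is the dual box $\widetilde{V}_i$ of Theorem \ref{thm::maindualitybox} enlarged by $\eta^{-1}$. The small-$k_2$ case corresponds to steep lines and is handled the same way, or excluded by curvature, after a fixed change of coordinates as with the $\Phi_j$. Every $U_i$ whose dual box contains no such rational is \emph{good} and contributes $\tfrac{7}{12}\delta Q^2|U_i|(1+O(\eta))$. I put the bad $U_i$ into $E$, so $E$ is a union of intervals of length $L$, at most $|I|/L\lsim \delta^{-1}\eta^{-1}$ after merging adjacent ones, which matches the claimed count up to constants. For $J\subset I\setminus E$, the two partial tiles at the ends of $J$ contribute $O(\delta Q^2 L)$ by the crude upper bound (Theorem \ref{thm::polarcount} again), which is the stated error term.

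The main obstacle is bounding $|E|$, which is the sparse dual count from the sketch in the introduction. I split dyadically in the height $m\sim M\lsim \eta^{-1}\delta^{-1}$ of the offending rational point of the dual curve $\gamma^\ast(t)=(f'(t),tf'(t)-f(t))$. Since $f''\ge c$, $\gamma^\ast$ is a curve whose neighbouring dual boxes, of size $\sim \eta^{-1}M^{-1}/(LQ)\times \eta^{-1}M^{-1}/Q$, bend away from a straight line. I take $T$ consecutive boxes with $T\sim 1/(ML)$ up to powers of $\eta$, chosen so that the convex hull has area below $c_0M^{-3}$. Lemma \ref{lem::ratrepellancy} then puts all height-$M$ rationals in these boxes on one line. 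Curvature of $\gamma^\ast$ lets a line meet only $O(1)$ of the boxes in a run, unless the boxes are already thin enough that a line fits, which is the degenerate case I must check using that the box widths shrink in $M$. So at most an $O(ML\cdot\mathrm{poly}(\eta^{-1}))$ fraction of the tiles is bad at height $M$. Summing over the $O(\log Q)$ dyadic ranges gives $|E|\lsim \eta^{-1}(\delta Q)^{-1/2}$ after balancing. This balancing, in which the logs and $\eta$-powers enter, is the step I expect to need the most care. It also explains the lower bound on $\eta$: one needs $|E|\le \eta$, i.e.\ $\eta^{2}\gsim(\delta Q)^{-1/2}$ times the log losses, and that is exactly $\eta\ge (C(\log Q)^2/(\delta Q))^{1/4}$.
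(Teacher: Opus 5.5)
Your outline is the paper's (tiling at scale $\sqrt{\delta\eta/Q}$, good tiles via Theorem~\ref{thm::polarcount}, bad tiles into $E$, partial tiles at the ends of $J$). The gap is in the step you flag as the main obstacle, the bound on $|E|$, and it is more than balancing logs.

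First, your picture of the dual boxes is off. At height $M$ the dual box of a tile has length $a\sim\eta^{-1}/(LQM)$ along $\gamma^\ast$, while consecutive centres are only $\sim L$ apart, and $a/L=\eta^{-2}/(\delta M)\geq\eta^{-1}$. So the boxes overlap heavily. Any line through the dual neighbourhood meets $\gsim a/L$ of them, not $O(1)$. A single rational spoils $\sim\eta^{-1}(\delta M)^{-1/2}$ tiles, which is not $\mathrm{poly}(\eta^{-1})$.

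The right bookkeeping is per offending rational. Each $(n,m,k)$ with $F_i^\ast(n,m,k)\le\eta^{-1}$ and $m\sim M$ spoils an $x$-set of measure $\lsim(\eta MQ)^{-1/2}$, so what you need is that there are only $O(M)$ such rationals. By the lower bound on $\eta$, these are rationals of height $\sim M$ within $O(M^{-2}(\log Q)^{-2})$ of $\gamma^\ast$, so this is a count in the sparse regime. That is where the hypothesis on $\eta$ is really used, not via $|E|\le\eta$.

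The same count gives the number of intervals. Your estimate $|I|/L\lsim\delta^{-1}\eta^{-1}$ is false, since $|I|/L\sim\sqrt{Q/(\delta\eta)}$ and $\delta\eta Q\gg1$. The correct bound $\sum_{M\le\delta^{-1}\eta^{-1}}O(M)$ comes from the number of offending rationals.

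The missing idea is how to prove that sparse count. The determinant method on a window of length $\sim 1/M$ only puts all offending rationals on one line $\Lambda$. If $\Lambda$ is nearly tangent to $\gamma^\ast$ and has small height $\mu$, it can carry up to $\sim M/(\mu\log Q)$ of them. By the second part of Lemma~\ref{lem::linecount}, such a $\Lambda$ is dual to a rational point of height $\mu$ within $O(M^{-2}(\log Q)^{-2})$ of the original curve. So your ``degenerate case'' needs a sparse count for $f$ at smaller height; the problem is recursive. Crude bounds, such as $\lsim\mu^2$ such points, lose a power of $M$.

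The paper supplies this through Theorem~\ref{thm::sparsecurve}, proved by a density increment that alternates between $f$ and its Legendre dual $g$:
\begin{itemize}
\item Lemma~\ref{lem::linecover} covers the sparse rationals by $O(Q)$ lines.
\item Lemma~\ref{lem::linecount} bounds the points on each line, and shows that an excess forces many rationals even closer to the dual curve at smaller height.
\item Iterating this gives a contradiction.
\end{itemize}
The theorem is then applied to $g$ with $Q'=M_l$ and $\delta'=1/(CM_l(\log Q)^2)$, giving $O(M_l)$ offending lines per dyadic class. $E$ is the union of intervals of radius $2(\eta M_lQ)^{-1/2}$ around the points where these lines approach the curve.

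A smaller point: for the factor $(1+O(\eta))$ you need the Taylor error over a tile to be $O(\eta\delta/Q)$, not merely $o(\delta/Q)$. This follows from boundedness of $f''$.
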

In \cite{huxley}, the upper bound in Theorem \ref{thm::R2curves} was established except for a $\delta^{-\eps}$ error in the main term.  This error term was subsequently removed in \cite{vaughan-velani}, where building on the result in \cite{huxley}, the upper bound in Theorem \ref{thm::R2curves} was established with an error term of roughly $Q (\log Q)^{200}$ instead of $Q (\log Q)^2$ under additional smoothness assumptions.  The lower bound in Theorem \ref{thm::R2curves} was first established in \cite{beresnevich-dickinson-velani}, also under additional smoothness assumptions.  An asymptotic formula was previously established by Huang in \cite{huangasymp} in the weaker range of $\delta \geq Q^{-1+\eps}$ through Fourier analysis.  Huang also works with a modified counting function and emphasises that his method does not allow to treat the natural counting function $\mathcal{N}(\delta, Q)$,  while our methods work for both their and our counting function.  Moreover,  all of these results rely on Huxley's bound in \cite{huxley},  which we bypass by using Lemma \ref{lem::ratrepellancy} directly. 

As we will see,  Theorem \ref{thm::R2curves} follows from a corresponding upper bound for rationals near curves in the sparse range of $\delta < Q^{-1}$. 
\begin{theorem} \label{thm::sparsecurve}
There is a constant $C$ depending only on the curve such that for any $Q$ and any $\delta \leq \frac{1}{ C Q (\log Q)^2}$, 
\[
\mathcal{N}(\delta, Q) \leq C Q.
\]
\end{theorem}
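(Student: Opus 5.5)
We prove Theorem \ref{thm::sparsecurve} directly with the determinant method (Lemma \ref{lem::ratrepellancy}), without dualising. Set $\eps:=\delta/Q \leq (CQ^2(\log Q)^2)^{-1}$. We split $I$ into consecutive arcs of length $\ell$ and choose $\ell$ so that the convex hull of the $\eps$-neighbourhood of one arc has area $\lsim \ell^3+\eps\ell$, which is below the threshold $c_0 Q^{-3}$ of Lemma \ref{lem::ratrepellancy}. Concretely we take $\ell \sim c_1 Q^{-1}$; then $\ell^3 \sim c_1^3 Q^{-3}$ and $\eps \ell \lsim Q^{-3}/(C(\log Q)^2)$, both small for $c_1$ small and $C$ large. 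Any three rationals of height $\leq Q$ in one such piece therefore span a triangle of area $\ll Q^{-3}$, so they are collinear. Hence in each of the $\sim Q$ pieces, all counted rationals lie on a single rational line $L$.

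It remains to show that each piece carries $O(1)$ rationals on average. Fix a piece and its line $L:\, k_0+k_1x+my=0$, with $\gcd=1$ and $m \gsim |k_1|$ after orienting as in Remark \ref{rem::rathyperorthrepellancy} (the steep case is handled by swapping roles). The points $a/q$ lie in $L \cap B_\eps(\gamma)$. By the curvature $f''\geq c$, this set is a segment of length at most $\sqrt{\eps}$, or $\lsim \eps/\theta$ if $L$ makes angle $\theta$ with the tangent. The second part of Lemma \ref{lem::ratrepellancy} shows that distinct rationals of height $\leq Q$ on $L$ are separated by $\gsim m/Q^2$ in the $x$-coordinate. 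Alternatively, parametrise $L\cap\Z^3$-points $(a,b,q)$ as a rank-2 lattice of covolume $\sim \|(k_0,k_1,m)\|$ and count with Theorem \ref{thm::polarcount}. The number of points on $L$ in a piece is then $\lsim 1+\min(\sqrt{\eps},\ell)\,Q^2/m$.

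The main obstacle is controlling the pieces whose line has small height $m$, where this bound exceeds $O(1)$. We handle them by summing over lines rather than pieces. A line of height $m$ with $m \leq \sqrt{\eps}Q^2$ meets $B_\eps(\gamma)$ in at most two segments of length $\lsim\sqrt{\eps}$ by convexity. Moreover, it only meets $B_\eps(\gamma)$ at all if it is nearly tangent, i.e. $(-k_0/m,-k_1/m)$ lies within $\lsim\eps$ of the Legendre dual curve. So we must count rationals of height $\sim m$ (dyadically, $m\sim M$) within $\eps$ of the dual curve, which is again a sparse problem at smaller height. Here we induct on the height: the number of such lines is $\lsim M$ (plus error) by the same argument applied to the dual curve, since $\eps \leq M^{-2}(\log)^{-2}$ holds for $M\leq \sqrt{\eps}Q^2\lsim Q$. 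Each such line contributes $\lsim \sqrt{\eps}Q^2/M$ points. So dyadic range $M$ contributes $\lsim \sqrt{\eps}Q^2 \lsim Q/(\sqrt{C}\log Q)$, and summing over $\lsim \log Q$ dyadic ranges gives $O(Q)$. This is exactly where the $(\log Q)^2$ in the hypothesis is spent: one $\log$ pays for the dyadic sum and the other gives room for the constant in the induction. Making the induction close with the same constant $C$ (the dual curve has comparable curvature, and constants depend only on $f$) is the delicate step I expect to require the most care. Combining the generic pieces ($O(1)$ each, $O(Q)$ total) with the low-height lines gives $\mathcal{N}(\delta,Q)\leq CQ$.
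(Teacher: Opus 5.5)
Your architecture matches the paper's, but the key counting step rests on a false claim. You assert that a line meets $B_\eps(\gamma)$ only if it is nearly tangent, so that its dual point lies within $O(\eps)$ of the Legendre dual curve. This is wrong. A line crossing the curve at angle $\theta$ meets $B_\eps(\gamma)$ in a segment of length $\sim \eps/\theta$ and carries $\lsim 1+\eps Q^2/(\theta m)$ rationals. Its dual point is at distance $\sim\theta^2$ from the dual curve: Lemma \ref{lem::linecount} gives $\lsim \theta^2+d$, and for $f(x)=x^2$ the vertical distance is exactly $\theta^2/4$.

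For $\theta\gg\sqrt{\eps}$ such lines are invisible to your dual count at scale $\eps$, yet they can carry many points each. For instance, $\theta=\eps^{1/4}$ and $m\ll \eps^{3/4}Q^2$ give $\gg 1$ points per line, with dual distance $\sqrt{\eps}\gg\eps$. Neither your scale-$\eps$ dual count nor the trivial bound $\lsim Q$ on the number of lines controls their total contribution. So, as written, your sum over lines omits a whole family.

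The repair is what the paper builds in through the weight $t_j=\max\{1,\theta_j\sqrt{Q/\delta}\}$ and the bound ``dual distance $\lsim t_j^2\delta/Q$''. Decompose dyadically in $\theta\sim\Theta\geq\sqrt{\eps}$ as well as in $m\sim M$. A line carries more than $O(1)$ points only if $\Theta M\lsim \eps Q^2=\delta Q\leq (C(\log Q)^2)^{-1}$. Its dual point then lies within $\Theta^2\lsim (CM(\log Q)^2)^{-2}$ of the dual curve, which is still sparse at height $M$. Your induction hypothesis gives $\lsim CM$ such lines, and $\sum_\Theta CM\cdot\eps Q^2/(\Theta M)\lsim C\sqrt{\eps}Q^2$ is a geometric series dominated by $\Theta\sim\sqrt{\eps}$.

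With this added, the rest of your argument is the paper's: the line cover of Lemma \ref{lem::linecover}, the per-line count of Lemma \ref{lem::linecount}, and passage to the dual at height $M$. You organise it as a strong induction at fixed $C$, whereas the paper argues by contradiction with a density increment $C\to C^{5/4}$; the two are equivalent.

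Your induction does close. The total is $O(Q)+O(C\sqrt{\eps}Q^2\log Q)=O(Q)+O(\sqrt{C}\,Q)\leq CQ$ for $C$ large, because $\sqrt{\eps}Q^2=Q\sqrt{\delta Q}\leq Q/(\sqrt{C}\log Q)$. Note that the slack comes from $\sqrt{C}$ versus $C$. The $(\log Q)^2$ in the hypothesis becomes a single $\log Q$ under the square root and is entirely spent on the dyadic sum over $M$. There is no second logarithm left over, contrary to your accounting.
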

\subsection{Auxiliary results}
We first show that the rational points near the curve can be covered by few lines.
\begin{lemma} \label{lem::linecover}
Let $f \colon I \to \R$ be twice continuously differentiable on a bounded interval $I$ and let $(x, f(x))$ be the corresponding curve in $\R^2$.  Let $Q \in \N$.  There is $N \lsim Q$ and affine lines $l_1, \dots, l_N$ containing all rational points of the form $(a/q, b/q)$ with $1 \leq q \leq Q$ and 
\[
\left|f\left(\frac{a}{q}\right)-\frac{b}{q}\right| \leq \frac{1}{Q^2}.
\]
\end{lemma}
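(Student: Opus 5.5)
We want to show that all rational points $(a/q,b/q)$ with $1\le q\le Q$ lying within $Q^{-2}$ of the curve (vertically) can be covered by $N\lsim Q$ affine lines. The plan is to combine a partition of $I$ into short subintervals with the planar case of Lemma \ref{lem::ratrepellancy} ($n=2$). That case says any three rational points of height at most $Q$ which are not collinear span a triangle of area $\gsim Q^{-3}$.

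\textbf{Step 1: partition and local geometry.} Split $I$ into $\sim Q$ consecutive intervals $I_1,\dots,I_N$, each of length $c/Q$, with $c$ a small constant depending on $f$. On each $I_k$, every rational point counted lies within $Q^{-2}$ of the arc $\{(x,f(x)):x\in I_k\}$. By Taylor's theorem with $|f''|\le K$ on $I$, this arc lies within $K(c/Q)^2$ of its chord. So all relevant points over $I_k$ lie in a parallelogram $P_k$ of horizontal length $c/Q$ and vertical thickness $\lsim (1+Kc^2)Q^{-2}$. Hence $\vol(P_k)\lsim c(1+Kc^2)Q^{-3}$.

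\textbf{Step 2: apply the determinant method on each piece.} Any three counted points in $P_k$ form a triangle contained in $P_k$, so its area is at most $\vol(P_k)\lsim cQ^{-3}$. Choose $c$ small enough, depending only on $K$ and the implicit constant in Lemma \ref{lem::ratrepellancy}, that this is strictly below the lower bound $\gsim Q^{-3}$ that the lemma gives for non-collinear triples. Then every triple of counted points in $P_k$ is collinear, so all counted points over $I_k$ lie on a single line $l_k$. If there is at most one point, pick any line through it.

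\textbf{Step 3: collect.} The lines $l_1,\dots,l_N$ with $N\sim|I|Q/c\lsim Q$ then cover all the rational points in question. The main obstacle is keeping the constants uniform: the bound $\vol(P_k)\lsim Q^{-3}$ must come with a constant that can be made arbitrarily small by shrinking $c$, and this only needs a bound on $f''$. No lower curvature bound is used here, which matches the statement assuming only $C^2$ regularity.
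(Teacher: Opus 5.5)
Your proof is correct and follows the paper's argument: you combine Lemma \ref{lem::ratrepellancy} with the Taylor estimate showing that counted points whose $x$-coordinates lie within $c/Q$ of each other sit in a strip of area $\lsim cQ^{-3}$, so any three of them are collinear. The only difference is bookkeeping. You use a fixed partition of $I$ into windows of length $c/Q$ with one line per window, while the paper sweeps greedily in the $x$-coordinate, taking the line through the two leftmost uncovered points and showing that successive lines start at least $c/Q$ apart; both give $N \lsim |I|Q/c+1$.
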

\begin{proof}
Let three points $(a_i/q_i,  b_i/q_i)$,  $i=1, 2, 3$,  of the form in the lemma be given.  By Lemma \ref{lem::ratrepellancy},  the three points are either co-linear or enclose an area of size $\gsim Q^{-3}$.  If the three points satisfy 
\[
\max_{i, j} |a_i/q_i-a_j/q_j| \leq \frac{c}{Q},
\]
for some small $c>0$,  then as 
\[
f(x)=f(x_0)+(x-x_0) f^\prime(x_0)+O\left(\frac{1}{Q^2}\right)
\]
around $x_0:=a_1/q_1$,  all $(a_i/q_i,  b_i/q_i)$ are contained in a rectangular strip of length $\frac{c}{Q}$ and width $O\left( \frac{1}{Q^2} \right)$.  Thus,
\[
\vol(\conv((a_i/q_i,  b_i/q_i),  i=1, 2, 3)) \lsim \frac{c}{Q^3},
\]
which is less than the threshold prescribed by Lemma \ref{lem::ratrepellancy} provided that $c$ is sufficiently small.  Thus,  we must have 
\[
\max_{i, j} |a_i/q_i-a_j/q_j| \geq \frac{c}{Q}
\]
unless the three points are co-linear.  

To cover the set $V$ of rational points $\{v:=(a/q, b/q)\}$ satisfying the assumption in the lemma,  define $\pi \colon V \to \R$ by $\pi(v)=a/q$ and let $v_1$ and $v_1^\prime$ be two points in $V$ minimizing $\pi$. Let $l_1$ be the line passing through them, let $V_1$ be the set of points lying on $l_1$,  let $v_2, v_2^\prime$ be two points in $V-V_1$ minimizing $\pi$ and define $l_2$ to be the line passing through them.  The points $v_1, v_1^\prime$ and $v_2$ are not co-linear,  so by the above,  necessarily 
\[
\pi(v_2)-\pi(v_1) \geq \frac{c}{Q}.
\]
Continuing this procedure to define $v_i, v_i^\prime$ and $l_i$,  we see that we cover all points in $V$ and that as $\pi(v_{i+1})-\pi(v_i) \geq \frac{c}{Q}$ for all $i$,  we do so with at most $|I| c^{-1} Q+1$ many lines.
\end{proof}

Finally,  we need the following lemma on rational lines near the curve.
\begin{lemma} \label{lem::linecount}
Let $f \colon I \to \R$ be twice continuously differentiable with $c \leq f^{\prime \prime} \leq c^{-1}$ on $I$,  let $\gamma(x)=(x, f(x))$ and let $(y, g(y))$,  $y \in f^\prime(I)$,  be the dual curve,  where $g(f^\prime(x))=x f^\prime(x)-f(x)$.  Let $\eps>0$ be small and let $l=\{(x,y): nx+my+k=0\}$ with $(n, m, k) \in \Z^3$ primitive,  $m \geq 1$ and $|n/m| \lsim 1$.  Assume that $d:=\mathrm{dist}(l, \gamma) \leq \eps$ and that this distance is attained at some $x_0$ not too close to the boundary of $I$,  and let $\theta:=\left|\frac{-n}{m}-f^\prime(x_0)\right|$ be the angle between the line and the tangent of the graph at $x_0$.  Then 
\begin{equation} \label{eq::linedens}
\#\left\{\left(\frac{a}{q}, \frac{b}{q} \right) \in l \cap B_{\eps}(\gamma): 1 \leq q \leq Q \right\} \lsim 1+\frac{Q^2}{m} \sqrt{\eps} \min\left\{\theta^{-1} \sqrt{\eps}, 1 \right\},
\end{equation}
where $B_\eps(\gamma)$ denotes the $\eps$-neighbourhood of the curve and we count distinct rational points.  Moreover,  the point $\left(\frac{-n}{m}, \frac{k}{m} \right)$ is at distance $\lsim \theta^2+d$ from the dual curve $(y, g(y))$.  The implicit constants depend only on $c$.
\end{lemma}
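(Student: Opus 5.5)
We bound the number of rationals on $l$ with height at most $Q$ inside $l \cap B_\eps(\gamma)$ by combining two estimates: the length of this segment and the spacing of rationals of height $\le Q$ on $l$. After that, the dual-curve statement follows from a Taylor expansion.

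\emph{Step 1 (length of $l\cap B_\eps(\gamma)$).} Let $x_0$ be the point where $\mathrm{dist}(l,\gamma)=d$ is attained. Parametrise $l$ near $x_0$ as the graph $y=L(x)$ with slope $-n/m$. Then
\[
h(x):=f(x)-L(x)=\pm d'+\theta'(x-x_0)+\tfrac12 f''(\xi)(x-x_0)^2 ,
\]
where $|d'|\sim d$. Since $|n/m|\lsim 1$, vertical and orthogonal distances are comparable. At the minimiser of the distance the line is essentially tangent to the $d$-level set, so $\theta$ is controlled by $\sqrt{d}$ up to constants. Because $c\le f''\le c^{-1}$, the set $\{x:|h(x)|\lsim \eps\}$ is an interval of length $\lsim\sqrt{\eps}$. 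When $\theta\gsim\sqrt\eps$, the linear term dominates: $h$ moves by $\gsim\theta|x-x_0|$, so the length is $\lsim \eps/\theta$. Altogether the length is $\lsim\sqrt\eps\min\{\theta^{-1}\sqrt\eps,1\}$. If one prefers not to rely on the first-order condition at $x_0$, it is cleaner to define $\theta$ at the point of the chord and check that the two definitions agree up to $O(\sqrt\eps)$, which is harmless.

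\emph{Step 2 (spacing on $l$).} A rational point $(a/q,b/q)\in l$ satisfies $na+mb+kq=0$. Hence $na+kq\equiv 0 \mo m$, and the first coordinates $a/q$ lie in a set of the form $\{x: mx\cdot(\ldots)\}$. For two distinct such points $a/q\neq a'/q'$ we get $|a/q-a'/q'|\ge \frac{1}{qq'}$. The gain of a factor $m$ comes from the second part of Lemma \ref{lem::ratrepellancy} with $n=2$: the hyperplane $H=l$ has coefficient $m$ in front of $x_2$, so $|a/q-a'/q'|\gsim m/Q^2$ for distinct points. Directly, $m\mid (na+kq)$ and $m\mid(na'+kq')$, so $m$ divides $aq'-a'q$ after multiplying by $n$. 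A coprimality argument (using primitivity of $(n,m,k)$, and noting that $\gcd(n,m)$ must be coprime to $k$) gives $m\mid aq'-a'q$. The count in an interval of length $\lambda$ is then at most $1+\lambda Q^2/m$. Inserting $\lambda$ from Step 1 yields \eqref{eq::linedens}.

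\emph{Step 3 (dual point).} Write the line as $y=-\frac nm x-\frac km$. The tangent at $x_0$ is $y=f'(x_0)x-g(f'(x_0))$, with dual point $(f'(x_0),g(f'(x_0)))$ in the paper's convention. We compare this with the line's dual point $(-n/m,k/m)$, which is on the curve $(y,g(y))$ up to the sign conventions fixed by the lemma. The slopes differ by $\theta$. The intercepts differ by $\lsim d+\theta|x_0|$ at $x_0$. Since $g'(y)=x$ where $y=f'(x)$, moving the $y$-coordinate by $\theta$ along the dual curve changes the intercept by exactly $x_0\theta+O(\theta^2)$, as $g''\lsim 1$. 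So the first-order term cancels and the dual point is within $\lsim\theta^2+d$ of $(y,g(y))$ at $y=-n/m$.

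The main obstacle is Step 2: getting the $m$-divisibility cleanly when $\gcd(n,m)>1$. I would handle it either through Lemma \ref{lem::ratrepellancy} as stated (applying it to the two points on $H$, with the second coordinate recovered from $H$) or through the prime-power argument in its proof.
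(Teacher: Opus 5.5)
Your proposal is correct and follows the paper's proof step for step. The spacing $|a/q-a'/q'|\gsim m/Q^2$ comes from the second part of Lemma \ref{lem::ratrepellancy}; your direct divisibility argument is its prime-power proof specialised to two points on a line. The length of $l\cap B_\eps(\gamma)$ comes from Taylor expansion and $f''\geq c$. The dual-point bound comes from the cancellation of the first-order term: you expand $g$ at $f'(x_0)$ using $g'(f'(x))=x$ and $g''=1/f''$, while the paper expands $f$ at the point $x_1$ with $f'(x_1)=-n/m$, and these are the same computation.

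One aside in Step 1 is false, though you never use it: $\theta$ is not controlled by $\sqrt d$. For $d>0$ the first-order condition at the interior minimiser forces $\theta=0$. For $d=0$ the line is a secant and $\theta$ can be of order one, which is exactly the case where your $\eps/\theta$ bound matters. In that case $\{|h|\leq\eps\}$ is two short pieces around the two crossings rather than one interval, which only costs a factor $2$.
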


\begin{proof}[Proof of Lemma \ref{lem::linecount}]
By the second part of Lemma \ref{lem::ratrepellancy},  
\begin{equation} \label{eq::linedist}
\left|\frac{a_1}{q_1}-\frac{a_2}{q_2} \right|\geq \frac{m}{Q^2}
\end{equation}
for distinct rational points $(a_i/q_i, b_i/q_i)$ lying on the line.  We deduce that 
\[
\#\left\{\left(\frac{a}{q}, \frac{b}{q} \right) \in l \cap B_{\eps}(\gamma): 1 \leq q \leq Q \right\} \lsim 1+\frac{Q^2}{m} \mathrm{length}(l \cap B_{\eps}(\gamma)).
\]
Using Taylor approximation around $x_0$ and recalling that the curvature condition implies that $f^\prime$ is monotonously increasing with a given rate, we see that
\[
\mathrm{length}(l \cap B_{\eps}(\gamma)) \lsim \mathrm{length}(l \cap B_{\eps}\left(I_0\right)),
\]
where $I_0:=\left\{(x_0+y,  f(x_0)+f^\prime(x_0)(y-x_0)): |y-x_0| \lsim \sqrt{\eps} \right\}$. As the angle of $l$ is $\frac{-n}{m}$ and $B_{\eps}\left(I_0\right)$ is a rectangle of width $\eps$ and length $\sqrt{\eps}$, the inequality 
\[
\mathrm{length}(l \cap B_{\eps}\left(I_0\right)) \lsim \min\left\{\theta^{-1} \eps , \sqrt{\eps} \ \right\}
\]
follows. We thus have established (\ref{eq::linedens}).

To estimate the distance of $\left(\frac{-n}{m}, \frac{k}{m} \right)$ from the dual curve,  
we let $x_1$ be the point such that $f^\prime(x_1)=\frac{-n}{m}$ and note that by the curvature condition, $|x_1-x_0| \sim \theta$, which implies 
\[
f(x_0)=f(x_1)+(x_0-x_1) f^\prime(x_1)+O\left(\theta^2 \right)=f(x_1)+(x_0-x_1) \frac{-n}{m}+O\left(\theta^2 \right).
\]
Recalling that also
\[
\left|f(x_0)+\frac{n}{m}x_0+\frac{k}{m}\right| \lsim d
\]
by the definition of $d$,  we find
\begin{align*}
\left|\frac{k}{m}-g\left(\frac{-n}{m}\right)\right|=\left|\frac{k}{m}+f(x_1)-\frac{-n}{m} x_1 \right| \lsim \theta^2+ \left|\frac{k}{m}+f(x_0)-\frac{-n}{m} x_0 \right|\lsim \theta^2 + d,
\end{align*}
concluding the proof.
 \end{proof}

\subsection{Proof of Theorem \ref{thm::sparsecurve}}
\begin{proof}[Proof of Theorem \ref{thm::sparsecurve}]
Let $C$ be a large constant and assume that 
\[
\mathcal{N}(\delta, Q) \geq C Q.
\]
We will drive this assumption to contradiction by finding a density increment: That is, we show that, provided that $C$ is large enough for $C^{1/4}$ to beat the arising implicit constants, there is a $Q^\prime \leq Q$ and a $\delta^\prime \leq \frac{1}{C^2 Q^\prime (\log Q^\prime)^2}$ such that 
\[
\mathcal{N}(\delta^\prime, Q^\prime) \geq C^{5/4} Q^\prime .
\]
Iterating this result gives the desired contradiction. 

To get the proof started,  we use Lemma \ref{lem::linecover}  to find lines $l_1, \dots, l_N$ with $N \lsim Q$ covering all of the points counted in $\mathcal{N}(\delta, Q)$.  Let each individual line be defined by the equation $n_j x+m_j y+k_j=0$, where we take $m_j\geq 0$.  Let $x_j$ be the point minimizing the distance between $l_j$ and $\{(x, f(x))\}$ and let 
\[
\theta_j:=\left|\frac{-n_j}{m_j}-f^\prime(x_j)\right|
\]
be the angle between the line and the tangent to the curve.  Any line with $m_j=0$ has $\theta_j \gsim 1$ and can thus contribute at most $Q^2 \theta_j^{-1} \delta/Q \lsim 1$ many points to the total count.  Hence, we can assume $m_j>0$.  

To estimate the total contribution of the other lines,  we let 
\[
t_j:=\max\left\{1,  \theta_j \sqrt{\frac{Q}{\delta}} \right\}. 
\]
By Lemma \ref{lem::linecount},
\[
C Q  \leq \mathcal{N}(\delta, Q) \lsim  Q^2 \sqrt{\frac{\delta}{Q}} \sum_j \frac{1}{t_j m_j} \leq \frac{Q}{\sqrt{C} \log Q} \sum_j \frac{1}{t_j m_j}.
\]
Because there are only $\lsim Q$ many indices,  the contribution of all $j$ such that $t_j m_j \geq \frac{Q}{C^{5/4} \log Q}$ is at most $CQ/2$, provided that $C^{1/4}$ beats the implicit constant.  Thus, it must already hold that 
\[
\sum_{j, t_j m_j \leq \frac{Q}{C^{5/4} \log Q}} \frac{1}{t_j m_j} \gsim C^{3/2} \log Q.
\]

Put the indices $j$ with $t_j m_j \leq \frac{Q}{C^{5/4} \log Q}$ into classes $A_l$ according to the size of $m_j$.  That is, let $A_l$ consist of indices with $M_l \leq m_j \leq 2 M_l$ for $j \in A_l$, where $M_l:=2^{l-1}$,  and let $C_l$ be the number of indices in class $A_l$.  There are only $\log Q$ many classes and $t_j \geq 1$ for all $j$, so again assuming that $C^{1/4}$ beats the implicit constants,  there must be an $l$ such that
\[
C_l \geq C^{5/4} M_l.
\]

By the second part of Lemma \ref{lem::linecount},  the distance of the rational $\left(\frac{-n_j}{m_j}, \frac{k_j}{m_j}\right)$ to the dual curve is proportional to $t_j^2 \delta/Q$. As $t_j m_j$ is not too big for the indices we are considering,  this distance is bounded by
\[
\frac{t_j^2 \delta}{Q} \leq \frac{t_j ^2}{C Q^2 (\log Q)^2} \leq \frac{(t_j m_j)^2}{C Q^2 (\log Q)^2  M_l^2}  \leq \frac{1}{C^{7/2} (\log Q)^4 M_l^2}.
\]
Putting $Q^\prime=M_l$ and $\delta^\prime:=\frac{1}{C^{7/2} (\log Q)^4 M_l}$, 
we see that we have indeed arrived at an instance 
of the counting problem for the dual curve $g$ in which 
\[
\mathcal{N}\left(\delta^\prime, Q^\prime, g \right) \geq C^{5/4} Q^\prime.
\]
As the dual of $g$ is our original curve $f$, we can iterate this as long as we like (without worrying about implicit constants blowing up), eventually arriving at a contradiction. 
\end{proof}

\subsection{Proof of Theorems \ref{thm::R2local} and \ref{thm::R2curves}}
We now see how the result in the sparse setting translates into the general one.  Both theorems follow from the proposition below.  Throughout,  $C$ is a large constant depending only on $f$,  $\delta \geq C Q^{-1}(\log Q)^2$ and $\eta \in [(C(\log Q)^2/(\delta Q))^{1/4}, 1]$.  We let $x_j$ be the $j$-th element of $I \cap \sqrt{\frac{\delta \eta}{Q}}\, \Z$ and $J_j$ the interval of length $\sqrt{\frac{\delta\eta}{Q}}$ centred at $x_j$,  and we write $M_l:=2^l$.
\begin{prop} \label{prop::R2structure}
\begin{enumerate}
\item[(i)] For each $l$ with $M_l \leq \delta^{-1}\eta^{-1}$,  there are $O(M_l)$ rational lines
\[
\ell^{(l)}_i=\{(x, y): n x+m y+k=0\},  \qquad (n, m, k) \in \Z^3 \textrm{ primitive},  \quad M_l<m \leq 2M_l,
\]
such that,  writing $x^{(l)}_i \in I$ for a point at which $\ell^{(l)}_i$ comes closest to the curve $(x, f(x))$,
\[
\mathcal{N}(\delta, Q, J_j)=\frac{7}{12}\,\delta Q^2 |J_j| \,(1+O(\eta))
\]
for every $j$ with $|x_j-x^{(l)}_i| \geq (\eta M_l Q)^{-1/2}$ for all $l$ and $i$.
\item[(ii)] Let $\eta=1$.  For each $l$ with $M_l \leq \delta^{-1}$ and each $i$,  let $I^{(l)}_i$ be the set of $x \in I$ at which $\ell^{(l)}_i$ meets the $\frac{\delta}{Q}$-neighbourhood of the curve.  Then $|I^{(l)}_i| \lsim \sqrt{\delta/Q}$,
\[
\sum_i \mathcal{N}(\delta, Q, I^{(l)}_i) \lsim \delta^{1/2} Q^{3/2},
\]
and $\mathcal{N}(\delta, Q, J_j) \lsim \delta Q^2|J_j|$ for every $J_j$ disjoint from all $I^{(l)}_i$.
\end{enumerate}
\end{prop}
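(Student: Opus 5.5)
For each patch $J_j$ I would count $\mathcal{N}(\delta,Q,J_j)$ by geometry of numbers, and show that an excess or deficit of points in $J_j$ forces a rational line with small $m$ to come very close to the curve near $x_j$. Those lines are then counted with Theorem \ref{thm::sparsecurve} applied to the dual curve $g$.

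\emph{Step 1: the count in one patch.} Linearise on $J_j$. Since $|J_j|^2=\delta\eta/Q$, Taylor's theorem gives
\[
f(x)=f(x_j)+f^\prime(x_j)(x-x_j)+O\left(\frac{\delta\eta}{Q}\right),
\]
so up to an $O(\eta)$ relative error in the width (handled by sandwiching between slightly thinner and thicker parallelograms) the points counted are the lattice points $(a,b,q)\in\Z^3$ in a convex body $K_j$. This body is a parallelepiped cut by the slab $Q/2\leq q\leq Q$. The region is not symmetric, so I cover it by $O(1)$ translated symmetric pieces, or use the standard symmetric-hull argument.

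\emph{Step 2: apply Theorem \ref{thm::polarcount}.} Its ``moreover'' part gives $\#=(1+O(\eta))\vol$ provided every polar successive minimum satisfies $\lambda_i^*\gtrsim\eta^{-1}$. A direct integration shows $\vol(K_j)=\frac{7}{12}\delta Q^2|J_j|$, with the factor $7/12$ arising from $\int_{Q/2}^{Q}$ of the $q$-dependent slice areas. By Lemma \ref{lem::polardist}, an integer vector $(k,n,m)$ with $F^*\leq\eta$ is exactly a primitive rational line $nx+my+k=0$ satisfying three conditions:
\begin{itemize}
\item $m\lesssim\eta/(\delta)$ after normalising by $Q$, i.e.\ $m\leq \delta^{-1}\eta^{-1}$ up to constants;
\item its slope $-n/m$ is within $\eta/(mQ|J_j|)$ of $f^\prime(x_j)$;
\item its offset at $x_j$ is within $\eta/(mQ)$.
\end{itemize}
So whenever no such line exists, the asymptotic in (i) holds.

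\emph{Step 3: collect the exceptional lines.} For each dyadic $M_l\leq\delta^{-1}\eta^{-1}$, the lines found in Step 2 with $m\sim M_l$ satisfy, by the second part of Lemma \ref{lem::linecount}, that $(-n/m,k/m)$ lies within $\lesssim\theta^2+d$ of the dual curve $g$. Here $\theta$ and $d$ are the angle and distance bounds from Step 2. Under the lower bound on $\eta$, this distance is $\leq 1/(C M_l^2(\log Q)^2)$, which puts us in the sparse regime of Theorem \ref{thm::sparsecurve} with $Q^\prime=2M_l$. That theorem yields $O(M_l)$ distinct such lines. These are the $\ell^{(l)}_i$. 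Each $J_j$ that forced a line must have $x_j$ within $\lesssim\theta/c$ of the tangency point $x^{(l)}_i$. Bounding $\theta\lesssim\eta/(M_lQ|J_j|)$ and $|J_j|=\sqrt{\delta\eta/Q}$, I then need to check that this is at most $(\eta M_lQ)^{-1/2}$ in the relevant range. This comparison of scales is where I expect the main difficulty. If the direct slope bound is too weak, I would instead use the $\theta$-dependence in Lemma \ref{lem::linecount} together with $d\leq \eta/(mQ)$, which gives $\theta\lesssim\sqrt{d}$, to localise.

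\emph{Part (ii).} This is the $\eta=1$ analogue. The intervals $I^{(l)}_i$ have length $\lesssim\sqrt{\delta/Q}$ by curvature. Lemma \ref{lem::linecount} bounds the points on each line by $1+Q^2\sqrt{\delta/Q}/M_l$. The remaining points in $I^{(l)}_i$ lie off the line and are controlled by the first part of Theorem \ref{thm::polarcount}, since there is only one small minimum. Summing over the $O(M_l)$ lines and the $O(\log Q)$ dyadic scales gives $\delta^{1/2}Q^{3/2}$; the $\log$ factors and the $+1$ terms are absorbed using $\delta\geq CQ^{-1}(\log Q)^2$. Away from all $I^{(l)}_i$, every polar minimum is $\gtrsim1$, so the first part of Theorem \ref{thm::polarcount} gives the upper bound $\mathcal{N}(\delta,Q,J_j)\lesssim\delta Q^2|J_j|$.
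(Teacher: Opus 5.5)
Your architecture is the paper's: the polar of the patch distance function via Lemma~\ref{lem::polardist}, exceptional polar vectors read as rational lines of small height, their dual points placed near $g$ and counted by Theorem~\ref{thm::sparsecurve} with $Q^\prime\sim M_l$, and Lemma~\ref{lem::linecount} for (ii). However, the localisation step you flag is a genuine gap, and it is compounded by an inverted threshold in Step~2.

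\textbf{The threshold.} The hypothesis of the ``moreover'' part of Theorem~\ref{thm::polarcount} fails exactly when some nonzero integer vector has $F_j^*<\eta^{-1}$, not $F^*\le\eta$. With your threshold, ``whenever no such line exists, the asymptotic holds'' is unjustified: a vector with $\eta<F_j^*<1/2$ is allowed by your criterion, yet it forces every counted point of the patch onto one line. From
\[
F_j^*(n,m,k)=\delta|m|+\sqrt{\eta\delta Q}\,|n+mf^\prime(x_j)|+Q\,|k+nx_j+mf(x_j)|
\]
the correct windows are:
\begin{itemize}
\item $1\le m<\delta^{-1}\eta^{-1}$, where $m=0$ is excluded because $\sqrt{\eta\delta Q}\ge\eta^{-1}$ and $Q\ge\eta^{-1}$ under the hypothesis on $\eta$;
\item slope error $<\eta^{-1}/(mQ|J_j|)$;
\item offset $<\eta^{-1}/(mQ)$.
\end{itemize}
Your Step~3 sparse-regime check survives this correction; this is where $\eta^3\ge\eta^4\ge C(\log Q)^2/(\delta Q)$ enters.

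\textbf{The localisation.} The slope-based localisation does not survive. It only gives $|x_j-x_*|\lsim\eta^{-3/2}M_l^{-1}(\delta Q)^{-1/2}$. This exceeds the required $(\eta M_lQ)^{-1/2}$ by the factor $\eta^{-1}(\delta M_l)^{-1/2}\ge\eta^{-1/2}$ for every admissible $M_l$, so it is always too weak.

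Your fallback has the right instinct but is not correct as stated. In Lemma~\ref{lem::linecount}, $\theta$ and $d$ refer to the point of closest approach, not to $x_j$. At $x_j$, a small offset forces a small angle only if the line does not cut the curve.

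What works, and is the paper's argument, is to localise using the offset term alone. The function $h(x):=k+nx+mf(x)$ is convex with $h^{\prime\prime}=mf^{\prime\prime}\sim M_l$. Hence $\{x:\ Q|h(x)|\le\eta^{-1}\}$ consists of at most two intervals of length $\lsim(\eta M_lQ)^{-1/2}$. Each lies around either the minimiser $x_*$ of $h$ (where $f^\prime(x_*)=-n/m$) or a point where the line crosses the curve. Taking these points, at most two per line, as the $x^{(l)}_i$ gives (i) without changing the $O(M_l)$ count.

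\textbf{Smaller repairs in (ii).} It is not true that every polar minimum is $\gsim 1$ on patches disjoint from all $I^{(l)}_i$. A line with $m\ll\delta^{-1}$ can have $F_j^*<c_0$ on $J_j$ while staying at distance between $\delta/Q$ and $c_0/(mQ)$ from the curve there. The bound still holds, because $|na+mb+kq|\le F_j(a,b,q)\,F_j^*(n,m,k)<1$ forces every counted point of $J_j$ onto that line. That line misses the $\frac{\delta}{Q}$-neighbourhood over $J_j$, so the patch contributes nothing. The same observation shows that a patch with one small polar minimum has no ``remaining points off the line''. Also, (ii) is a bound for each fixed $l$, and there $M_l+\delta^{1/2}Q^{3/2}\lsim\delta^{1/2}Q^{3/2}$ needs no logarithms.

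\textbf{Smaller repair in Step~1.} Covering the frustum by $O(1)$ symmetric pieces cannot give a $(1+O(\eta))$ asymptotic. Instead, use that all successive minima of $F_j$ are $\lsim\eta$ (by Mahler). Then the image of $\Z^3$ under the linear map sending $\{F_j\le1\}$ to $[-1,1]^3$ has a fundamental domain of diameter $O(\eta)$. The lattice points in the fixed normalised frustum therefore number $\vol/\mathrm{covol}\cdot(1+O(\eta))$.
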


\begin{proof}[Proof of Proposition \ref{prop::R2structure}]
To find the $x^{(l)}_i$,  we use the points $x_j$ defined above.  For each index $j$, define a symmetric convex distance function $F_j$ on $\R^3$ by
\[
F_j((a, b, q))=\max\left(\frac{1}{Q}|q|,  \frac{1}{\sqrt{\eta \delta Q}} |a - x_j q|, \frac{1}{\delta} \left|b-a f^\prime(x_j)- q [f(x_j)-x_j f^\prime(x_j)] \right| \right).
\]
This function is intimately related to counting the rationals near the curve on the interval $J_j$ because by Taylor approximation,
\[
f(x)=f(x_j)+(x-x_j) f^\prime(x_j)+O\left(\frac{\eta \delta}{Q}\right) 
\]
for $x \in J_j$.  Up to an $O(\eta)$ error  counting the rational points near the curve will correspond to counting integer points with $F_j(v) \leq 1$.  To be precise, given a vector $(a,b, q)$ of integers with $a/q \in J_j$ and $\frac{1}{2}Q \leq q \leq Q$,  the point $(a/q, b/q)$ will surely lie in the $\frac{\delta}{Q}$-neighborhood of the curve if $F_j(a, b, q) \leq 1-O(\eta)$ and will surely not lie in the neighborhood if $F_j(a, b, q) \geq 1+O(\eta)$ (we will use this later on).

We call the interval $J_j$ \emph{good} if $\lambda_3(F_j)\leq \eta$, where $\lambda_3$ is the third successive minimum of $F_j$ with respect to the lattice $\Z^3$.  We will control for how many indices this fails, and the number of rationals with $a/q$ lying in one of the problem intervals, by applying Theorem \ref{thm::sparsecurve} to the dual curve $g$.  

To control the occurrences at which $\lambda_3(F_j)> \eta$, we note that by Mahler's transference theorem \cite{mahler},  
\[
\lambda_3(F_j) \sim \lambda_1(F_j^*),
\] 
where $F_j^*$ is the dual distance function to $F_j$.  We have that
\[
\begin{pmatrix}
n \\ m \\ k
\end{pmatrix}=(k+n x_j+m f(x_j)) \begin{pmatrix}
0 \\ 0 \\ 1
\end{pmatrix}+(n+m f^\prime(x_j))\begin{pmatrix}
1 \\ 0 \\ -x_j
\end{pmatrix}+m \begin{pmatrix}
-f^\prime(x_j) \\ 1 \\ -f(x_j)+x_j f^\prime(x_j)
\end{pmatrix},
\]
so by Lemma \ref{lem::polardist},
\begin{equation} \label{eq::polarFjdef}
F_j^*((n, m, k))=\delta |m|+\sqrt{\eta \delta Q} |n+m f^\prime(x_j)|+Q|k+n x_j+m f(x_j)|.
\end{equation}
We denote by $(n_j, m_j, k_j)$ the vector in $\Z^3 \backslash \{0\}$ minimizing $F_j^*$, where we use the symmetry to take $m_j \geq 0$, and note that because of our assumption on $\eta$,  $F_j^*((n_j, m_j, k_j)) \leq \eta^{-1}$ is possible only if $m_j \neq 0$ and $m_j \leq \delta^{-1} \eta^{-1}$.  To bound the number of problematic intervals, it thus suffices to consider $j$ with $1\leq m_j \leq \delta^{-1} \eta^{-1}$.  Define the set 
\[
\mathcal{A}:=\{(n_j, m_j, k_j),  1 \leq m_j \leq \delta^{-1} \eta^{-1} \}. 
\]
We put the problematic indices into classes $A_l$,  where $l$ is such that $M_l:=2^l \leq \delta^{-1} \eta^{-1}$,  by defining
\[
A_l:=\{(n_j, m_j, k_j) \in \A: M_l<m_j \leq 2M_l\}.
\]
Fix $l$ and define for each $(n_j, m_j, k_j) \in A_l$ the corresponding line $\ell^{(l)}_i:=\{(x, y):n_j x+m_j y+k_j=0\}$ and the point $x^{(l)}_i$ at which it comes closest to the curve $(x, f(x))$. 

We first show that for each $x$ that is at a distance of at least $\Omega(\frac{1}{\sqrt{\eta M_l Q}})$ from each $x^{(l)}_i$, we have that $\lambda_3(F_j) \leq \eta$.  To see this,  we let $x_*$ be the point such that $f^\prime(x_*)=\frac{-n}{m}$ and observe that
\[
n+m f^\prime(x) \sim m |x-x_*| \sim M_l |x-x_*|.
\]
Thus, by monotonicity of $f^\prime$,
\[
\left| k+n x+mf(x)-(k+nx_*+mf(x_*))\right|=\left|\int_{x_*}^x n+mf^\prime(x) dx \right| \gsim M_l (x -x_*)^2,
\]
which implies that 
\[
\eta^{-1} \leq Q|k+n x+m f(x)| 
\]
can only happen on an interval of length $O\left(\frac{1}{\sqrt{\eta M_l Q}}\right)$. As 
\[
F_j^*((n, m, k))\geq Q|k+n x_j+m f(x_j)|
\]
by definition of $F_j^*$,  we see that the points with $F_j^*(n, m, k) \geq \eta^{-1}$ are indeed contained in an interval of size bounded by $O\left(\frac{1}{\sqrt{\eta M_l Q}}\right)$.  We have thus shown that indeed $\lambda_3(F_j) \leq \eta$ provided that $|x_j-x^{(l)}_i| \geq (\eta M_l Q)^{-1/2}$ for all $l$ and $i$. 

To make use of the condition $\lambda_3(F_j) \leq \eta$,  we let $j$ be an index such that $\lambda_3(F_j)\leq \eta$, or equivalently, such that there are three linearly independent vectors $w_1, w_2, w_3 \in \Z^3$ with $F_j(w_i) \leq \eta$ for $i=1, 2, 3$.  Let $A$ be the linear transformation 
\[
(a, b, q) \mapsto  \left(\frac{a - x_j q}{\sqrt{\eta \delta Q}}, \frac{b-a f^\prime(x_j)- q [f(x_j)-x_j f^\prime(x_j)]}{\delta}, \frac{q}{Q} \right),
\]
or in matrix form,
\[
A=\begin{pmatrix}
\frac{1}{\sqrt{\eta \delta Q}} & 0 & \frac{-x_j}{\sqrt{\eta \delta Q}} \\ \frac{-f^\prime(x_j)}{\delta} & \frac{1}{\delta}& \frac{-f(x_j)+x_j f^\prime(x_j)}{\delta} \\ 0 & 0 & \frac{1}{Q}
\end{pmatrix}.
\]
Then,  $F_j(A^{-1}x)=|x|_\infty$,  where $|\cdot|_\infty$ is the usual maximum norm.
Define $\Lambda=A \Z^3$ and note that by the property that $\lambda_3(F_j) \leq \eta$,  the lattice $\Lambda$ contains three independent vectors $v_i=Aw_i$ with $|v_i|_\infty \leq \eta$ and thus has a fundamental domain of diameter $O(\eta)$.  Under the transformation $A$, the domain 
\[
\left\{(a, b, q) \in \R^3: \frac{1}{2}Q \leq q \leq Q,  |a-q x_j| \leq \frac{q\sqrt{\delta \eta}}{2 \sqrt{Q}},  |b-a f^\prime(x_j)- q [f(x_j)-x_j f^\prime(x_j)]| \leq \frac{\delta q }{Q} \right\}
\]
gets mapped to the domain 
\[
\Delta:=\left\{(x, y, z) \in \R^3: \frac{1}{2} \leq x \leq 1,  \frac{-x}{2} \leq y \leq \frac{x}{2},  -x \leq z \leq x \right\}.
\]
As the fundamental domain of $\Lambda$ has a size of at most $O(\eta)$ and $\Delta$ is of much bigger size, 
\[
\#\{v \in \Lambda \cap \Delta\}=\frac{\vol(\Delta)}{\mathrm{covol}(\Lambda)}(1+O(\eta))=\frac{7}{12}\delta Q^2 |J_j| (1+O(\eta)),
\]
where we used
\[
\mathrm{covol}(\Lambda)=\det(A)=\frac{1}{\delta Q^2 |J_j|}
\]
and
\[
\vol(\Delta)=\int_{1/2}^1 2x^2 dx=\frac{7}{12}.
\]
Finally,  to relate this to our counting problem, we recall that by the Taylor approximation argument,  any point in $\Lambda$ inside the domain that is at least a distance of $O(\eta)$ from the boundary corresponds to a rational in the $\frac{\delta}{Q}$-neighbourhood of the curve on the interval $J_j$,  and no lattice point that is further away than $O(\eta)$ from $\Delta$ does.  The domain we would really need to estimate in the lattice count is some curly domain $\widetilde{\Delta}$, but by this argument, we know that $\widetilde{\Delta}$ agrees with $\Delta$ everywhere except in the $O(\eta)$-neighbourhood of the boundary of $\Delta$, which we denote by $H$.  We then find
\begin{align*}
\mathcal{N}(\delta, Q, J_j)&=\#\{v \in \Lambda \cap \widetilde{\Delta}\}=\#\{v \in \Lambda \cap \Delta\}+O\left(\#\{v \in \Lambda \cap H \} \right)\\
&=\frac{7}{12}\delta Q^2 |J_j| (1+O(\eta)),
\end{align*}
which concludes the analysis of the good intervals and proves part (i). 

We now show that for each $l$ there are only $O(M_l)$ many $x^{(l)}_i$, which is the crux of the proof. For this,  we apply Theorem \ref{thm::sparsecurve} to the dual curve.

We fix an $l$ and a $i$ and assume that $x^{(l)}_i$ lies in the interval around some point $x_j$.  We define a symmetric convex distance function $G_j$, coming from the dual curve $(y,  g(y))$, with parameters $\delta^\prime$ and $Q^\prime$ on an interval of length $\sqrt{\frac{ \eta \delta^\prime}{Q^\prime}}$ around the point $y_j:=f^\prime(x_j)$ by
\begin{align*}
G_j((n, k, m))&=\max\left(\frac{1}{Q^\prime} |m|,  \frac{1}{\sqrt{\eta \delta^\prime Q^\prime}}|n-m y_j|,  \frac{1}{\delta^\prime} |k-n g^\prime(y_j)-m [g(y_j)-y_j g^\prime(y_j)]|\right)\\
&=\max\left(\frac{1}{Q^\prime} |m|,  \frac{1}{\sqrt{\eta \delta^\prime Q^\prime}}|n-m f^\prime(x_j)|,  \frac{1}{\delta^\prime} |k-n x_j+m f(x_j)|\right).
\end{align*}
If we assume $\eta=1$ for a moment,  then we see that with the choice of $\delta^\prime=\frac{1}{Q}$ and $Q^\prime=\delta^{-1}$, 
\[
F_j^*((n, m, k)) \sim G_j((-n, k, m)),
\]
so the vector $(n_j, m_j, k_j)$ at which $F_j^*$ is small corresponds to a rational point $(-n_j/m_j,  k_j/m_j)$ lying extraordinarily close (as $\delta^\prime<\frac{1}{Q^\prime}$) to the dual curve.  

It should not be surprising that Theorem \ref{thm::sparsecurve} comes into play, now.  
We set $Q^\prime=M_l$ and $\delta^\prime:=\frac{1}{C M_l (\log Q)^2}$ (where $C$ is the absolute constant from Theorem \ref{thm::sparsecurve}).  We get that there are at most $O(M_l)$ many combinations of integers $(n, m, k)$ with $M_l<m \leq 2M_l$ such that $G_j(-n, k, m) \lsim 1$.  It only remains to check that with these choices,  $G_j(-n, k, m) \lsim 1$ is implied by $F_j^*(n, m, k) \lsim \eta^{-1}$.  This follows because
\[
\frac{1}{\sqrt{\eta \delta^\prime Q^\prime}}=\frac{\sqrt{C} \log Q}{\sqrt{\eta}} \leq \eta \sqrt{\eta \delta Q}
\]
and 
\[
\frac{1}{\delta^\prime}=C M_l (\log Q)^2 \leq C \delta^{-1} \eta^{-1} (\log Q)^2  \leq \eta Q
\]
by our assumptions on $\eta$. 

To complete the proof,  we still have to estimate the contribution of rationals near each individual $x^{(l)}_i$,  which is fortunately not too hard.  Recall that $I^{(l)}_i$ is the set in which the line $\ell^{(l)}_i$ meets the $\frac{\delta}{Q}$-neighbourhood of the curve $(x, f(x))$.  By the curvature condition,  $|I^{(l)}_i| \lsim \sqrt{\frac{\delta}{Q}}$.  By Lemma \ref{lem::linecount},  the line $\ell^{(l)}_i$ contains $\lsim 1+\frac{Q^2}{M_l} \sqrt{\frac{\delta}{Q}}$ rational points of height at most $Q$ near the curve,  and as there are $O(M_l)$ lines for each $l$ and $M_l \leq \delta^{-1} \leq \delta^{1/2}Q^{3/2}$,  their total number is $\lsim \delta^{1/2}Q^{3/2}$.  This gives
\[
\sum_i \mathcal{N}(\delta, Q, I^{(l)}_i) \lsim \delta^{1/2} Q^{3/2}.
\]
The last assertion of part (ii) is also not hard to see: If we fix an interval $J_j$ in the partition that does not intersect any $I^{(l)}_i$,  then if $F_j^*((n, m, k)) \geq c_0$ (for some absolute $c_0=c_0(n)$) for all $(n, m, k) \in \Z^3 \backslash \{0\}$, 
\[
\mathcal{N}(\delta, Q, J_j) \lsim \delta Q^2 |J_j|
\]
by Theorem \ref{thm::polarcount}. If there is a $(n, m, k)$ with $F_j^*((n, m, k)) \leq c_0$, then each rational point $(a/q, b/q)$ with $F_j(a, b, q) \leq 1$ lies on the line $l=\{(x, y): nx+my+k=0\}$ corresponding to $(n, m, k)$ (provided that $c_0$ is small enough to beat the constant in Mahler's Theorem). However,  as $F_j^*((n, m, k)) \leq c_0$ implies $m \leq c_0^{-1} \delta^{-1}$, the line $l$ already corresponds to some $x^{(l)}_i$. Because the $J_j$ is disjoint from the interval $I^{(l)}_i$ in which the line intersects the $\frac{\delta}{Q}$-neighbourhood of the curve,  in this case, there are no rational points near the curve  at all. 
\end{proof}
\begin{proof}[Proof of Theorem \ref{thm::R2local}]
Let $x^{(l)}_i$ be the points from Proposition \ref{prop::R2structure}(i) and let $E$ be the set of $x \in I$ with $|x-x^{(l)}_i|<2(\eta M_l Q)^{-1/2}$ for some $l$ and $i$.  Then $E$ is a union of $\lsim \sum_{l: M_l \leq \delta^{-1}\eta^{-1}} M_l \lsim \delta^{-1}\eta^{-1}$ intervals and
\[
|E| \lsim \sum_{l: M_l \leq \delta^{-1}\eta^{-1}} M_l \,(\eta M_l Q)^{-1/2} \lsim \eta^{-1}(\delta Q)^{-1/2}.
\]
Let $J \subset I \backslash E$ be an interval.  As $M_l \leq \delta^{-1}\eta^{-1}$ implies $\delta \eta^2 M_l \leq \eta \leq 1$,  we have $\sqrt{\delta\eta/Q} \leq (\eta M_l Q)^{-1/2}$,  so the centre $x_j$ of every interval $J_j$ meeting $J$ satisfies $|x_j-x^{(l)}_i| \geq (\eta M_l Q)^{-1/2}$ for all $l$ and $i$.  By Proposition \ref{prop::R2structure}(i),  the asymptotic formula therefore holds for all $J_j$ meeting $J$.  As $J$ is the union of the $J_j$ contained in it and of parts of at most two further intervals $J_j$,  each containing $\lsim \delta Q^2 \sqrt{\delta\eta/Q}$ points,  the claim follows.
\end{proof}

\begin{proof}[Proof of Theorem \ref{thm::R2curves}]
We first show the upper bound.  As the counting function is monotone in $\delta$,  it suffices to consider $\delta \geq C Q^{-1}(\log Q)^2$.  Apply Proposition \ref{prop::R2structure}(ii).  The intervals $J_j$ disjoint from all $I^{(l)}_i$ contribute $\lsim \delta Q^2|I|$.  For fixed $l$,  Proposition \ref{prop::R2structure}(ii) gives
\[
\sum_i \mathcal{N}(\delta, Q, I^{(l)}_i) \lsim \delta^{1/2} Q^{3/2},
\]
and summing over the $\lsim \log Q$ values of $l$ gives $\lsim \delta^{1/2}Q^{3/2}\log Q \leq \delta Q^2$,  as $\delta Q \geq (\log Q)^2$.  This shows $\mathcal{N}(\delta, Q) \lsim \delta Q^2$ for $\delta \geq CQ^{-1}(\log Q)^2$,  and hence $\mathcal{N}(\delta, Q) \lsim Q(\log Q)^2$ for smaller $\delta$.

For the asymptotic formula,  let $\eta:=(C(\log Q)^2/(\delta Q))^{1/4} \leq 1$ and let $E$ be as in Theorem \ref{thm::R2local}.  The set $I \backslash E$ is a union of $K \lsim \delta^{-1}\eta^{-1}$ intervals,  so Theorem \ref{thm::R2local} gives
\[
\mathcal{N}(\delta, Q, I \backslash E)=\frac{7}{12}\,\delta Q^2 |I \backslash E| \,(1+O(\eta))+O\left(K \delta Q^2 \sqrt{\frac{\delta\eta}{Q}}\right),
\]
where $|I \backslash E|=|I|+O(\eta^{-1}(\delta Q)^{-1/2})$ and $K\sqrt{\delta\eta/Q} \lsim \eta^{-1/2}(\delta Q)^{-1/2}$.  To bound the points over $E$,  we cover $E$ by the intervals $J_j$ of Proposition \ref{prop::R2structure}(ii) (with $\eta=1$) that meet $E$.  As in the proof of the upper bound,  those disjoint from all $I^{(l)}_i$ contribute $\lsim \delta Q^2(|E|+K\sqrt{\delta/Q}) \lsim \delta Q^2 \eta^{-1}(\delta Q)^{-1/2}$,  and the intervals $I^{(l)}_i$ contribute $\lsim \delta^{1/2}Q^{3/2}\log Q$ in total.  Altogether,
\[
\mathcal{N}(\delta, Q)=\frac{7}{12}\,\delta Q^2 |I| \left(1+O\left(\eta+\eta^{-1}(\delta Q)^{-1/2}+(\delta Q)^{-1/2}\log Q\right)\right).
\]
As $\eta^4 \delta Q=C(\log Q)^2 \geq 1$ and $\delta Q \geq (\log Q)^2$,  both $\eta^{-1}(\delta Q)^{-1/2}$ and $(\delta Q)^{-1/2}\log Q$ are $\lsim \eta \lsim (\log Q)^{1/2}(\delta Q)^{-1/4}$,  which proves the claim.
\end{proof}


\section{Projective Duality} \label{sec::projduality}

\subsection{Basics} \label{subsec::basics}
For hyperplanes in $\R^n$, a fixed direction (say the one in the $n$-th coordinate direction) and $c>0$,   projective duality associates to each affine hyperplane $H \subset \R^n$ with $\langle \vec{n_H},  e_n \rangle \geq c$ a point in the projective dual.  Precisely,  the point $x_H \in \R^n$ associated to the hyperplane $H$ is given by
\[
x_H=(\underline{n_H},  -\langle y_H,  (\underline{n_H}, 1) \rangle),
\]
where $\underline{n_H}$ is the vector such that $(\underline{n_H}, 1)$ is normal to $H$ and $y_H$ is any point in $H$.  With these definitions,  the projective dual is isomorphic to $\R^n$. 
 
Let $\gamma$ be a $n$ times continuously differentiable non-degenerate curve in $\R^n$;  that is, it has the property that $\dot{\gamma}(t),  \dots, \gamma^{(n)}(t)$ generate $\R^n$ for each $t \in [0, 1]$,  where $\gamma^{(k)}$ denotes the $k$-th derivative of $\gamma$.  

At each point $t$ and for each $k$,  one can define the vector space
\[
V_k(t):=\Span(\dot{\gamma}(t),  \dots, \gamma^{(k)}(t)),
\]
which is called the \textit{k-th osculating subspace}.  The full set 
\[
V_0=\{0\} \subset V_1 \subset \dots \subset V_n=\R^n
\]
is called the \textit{osculating flag}.  There is a natural orthonormal basis $e_1(t), \dots, e_n(t)$ defined via
\[
V_j=\Span(e_1,  \dots, e_j)
\]
associated to the flag,  called the Frenet frame.

Restricting to a small neighbourhood and permuting the coordinates,  if necessary,  we can assume that the $i$-th osculating space is not (close to being) contained in the space spanned by the first $i$ coordinates.  For example,  for a given $t_0$,  the normal vector of the $n-1$-osculating space (which is a hyperplane) will have a coordinate which is of size at least $n^{-1/2}$,  and by permuting this coordinate to be the last one (and using that the osculating spaces depend continuously on the curve),  we can modify our setting such that the unit vector $\vec{a}$ spanning $V_{n-1}^\perp$ has $|\vec{a}_n| \gsim 1$,  where the implicit constant only depends on $\gamma$ and $n$.  Here,  we define
\[
W^\perp=\{\vec{v} \in \R^n \Big| \forall w \in W: \langle v, w \rangle=0 \}
\]
for a subspace $W \subset \R^n$,  where $\langle -, - \rangle$ is the standard Euclidean inner product. 

Similarly,  we can find a $c_0$ such that for each $i$, for all $w \in V_i(t)^{\perp}$ with $\Vert w \Vert=1$,
\begin{equation} \label{eq::coordorder}
\Vert \pi_{i} w \Vert \geq c_0,
\end{equation}
where $\pi_i$ is the projection from $\R^n$ to $\R^{n-i}$,  sending $x$ to $(x_{i+1}, \dots, x_n)$.  As the determinant is continuous,  we can also assume that 
\[
|\det(\dot{\gamma}(t),  \dots, \gamma^{(n)}(t))| \geq c_0
\]
for all $0 \leq t \leq 1$.  

Fix $t$ and a normal vector of $V_{n-1}(t)$.  By the choice of the coordinates above,  we can rescale the normal vector $\vec{n}$ such that $(\vec{n})_n=1$ while still $\Vert \vec{n} \Vert \leq c_0^{-1}$.  Write
\[
\underline{v}=(\vec{v}_1, \dots, \vec{v}_{n-1})
\]
for a vector $v \in \R^n$.  

Using projective duality on $\R^n$,  the hyperplanes $V_{n-1}(t)$ correspond to a curve 
\[
\gamma^\ast(t):=(\underline{n},  -\langle \gamma,  \vec{n} \rangle),
\]
called the \textit{dual curve}.  It is a standard result from projective geometry (compare \cite[Theorem 1.1.3]{ovsienko-tabachnikov}) that $(\gamma^{\ast})^\ast=\gamma$ and that the osculating flags of $\gamma$ and $\gamma^\ast$ are dual to each other.  To make this explicit in our notation,  we prove this result here.

\begin{lemma} \label{lem::projdualcurve}
With the notation as above,  we have for all $t$ that
\[
\langle \gamma^{(j)}(t),  \vec{n}^{(i)}(t) \rangle=0
\]
for all $0 \leq i \leq n-1$ and $j \geq 1$ such that $i+j \leq n-1$.  In particular,  this implies that $\vec{n}, \dot{\vec{n}}, \dots, \vec{n}^{(j)}$ span $V_{n-1-j}^\perp$ for any $0 \leq j \leq n-2$.

Moreover,  for any $0 \leq j \leq n-1$,
\[
\frac{d^j}{dt^j} \langle \gamma,  \vec{n} \rangle=\langle \gamma,  \vec{n}^{(j)} \rangle=\langle \underline{\gamma},  \underline{n}^{(j)}(t) \rangle,
\]
and finally,  $(\gamma^*)^\ast=\gamma$. 
\end{lemma}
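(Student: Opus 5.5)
The plan is to derive everything from the single defining identity $\langle \gamma^{(j)}(t), \vec{n}(t)\rangle = 0$ for $1 \leq j \leq n-1$. This holds because $\vec{n}(t)$ is normal to $V_{n-1}(t)=\Span(\dot\gamma,\dots,\gamma^{(n-1)})$. The normalisation $(\vec{n})_n=1$ together with non-degeneracy makes $\vec{n}$ a differentiable function of $t$; it is obtained, for instance, via Cramer's rule from the minors of $(\dot\gamma,\dots,\gamma^{(n-1)})$. This is where the $C^n$ regularity is used, since it gives $\vec{n}\in C^1$, and more derivatives are needed for the full statement. I would assume enough smoothness here, or note that each derivative of $\vec n$ costs one derivative of $\gamma$.

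For the first claim I would induct on $i$. The case $i=0$ is the defining identity. Suppose $\langle \gamma^{(j)}, \vec{n}^{(i)}\rangle=0$ for all $j\geq 1$ with $i+j\leq n-1$. For $j$ with $(i+1)+j\leq n-1$, differentiate this identity in $t$:
\[
0=\langle \gamma^{(j+1)},\vec n^{(i)}\rangle+\langle \gamma^{(j)},\vec n^{(i+1)}\rangle .
\]
The first term vanishes by the induction hypothesis, because $i+(j+1)\leq n-1$. Hence the second term vanishes, which gives the claim at level $i+1$.

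Next, the statement that $\vec{n},\dots,\vec n^{(j)}$ lie in $V_{n-1-j}^\perp$ is exactly what we have just shown. The substantive part is that they span this space, and this is the main obstacle: we need linear independence. Both sides have dimension $j+1$, so it suffices to show that the vectors are independent. I would pair them against $\gamma^{(n-j)},\dots,\gamma^{(n)}$ and show that the matrix $(\langle\gamma^{(n-k)},\vec n^{(l)}\rangle)_{0\le k,l\le j}$ is triangular with nonzero diagonal.

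To see this, differentiate the vanishing identity once more at the boundary $i+j=n-1$. This gives
\[
\langle\gamma^{(n-i)},\vec n^{(i)}\rangle=-\langle\gamma^{(n-i+1)},\vec n^{(i-1)}\rangle=\dots=\pm\langle\gamma^{(n)},\vec n\rangle .
\]
The quantity $\langle\gamma^{(n)},\vec n\rangle$ is nonzero: $\vec n\neq0$ annihilates $\dot\gamma,\dots,\gamma^{(n-1)}$, and these vectors together with $\gamma^{(n)}$ span $\R^n$ by the determinant lower bound $c_0$. The entries below the antidiagonal vanish by the first claim, so the pairing matrix is triangular with nonzero diagonal. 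This gives independence.

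For the derivative formula, apply Leibniz:
\[
\frac{d^j}{dt^j}\langle\gamma,\vec n\rangle=\sum_k\binom jk\langle\gamma^{(k)},\vec n^{(j-k)}\rangle .
\]
Every term with $k\geq1$ has $k+(j-k)=j\leq n-1$ and therefore vanishes. Moreover, $(\vec n)_n\equiv1$ forces $(\vec n^{(j)})_n=0$ for $j\geq1$, so $\langle\gamma,\vec n^{(j)}\rangle=\langle\underline\gamma,\underline n^{(j)}\rangle$.

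Finally, for $(\gamma^*)^*=\gamma$, write $\gamma^*=(\underline n,-\langle\gamma,\vec n\rangle)$. By the derivative formula, $(\gamma^*)^{(j)}=(\underline n^{(j)},-\langle\underline\gamma,\underline n^{(j)}\rangle)$ for $1\le j\le n-1$, and each of these is orthogonal to $(\underline\gamma,1)$. Thus $(\underline\gamma,1)$ is the normalised normal of the osculating hyperplane of $\gamma^*$. Here I would check non-degeneracy of $\gamma^*$ using the spanning statement above: the vectors $\underline n^{(j)}$ with $1\le j\le n-1$ are independent, since the $\vec n^{(j)}$ are, and their last coordinates vanish. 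The dual point is then $(\underline\gamma,-\langle\gamma^*,(\underline\gamma,1)\rangle)$, and
\[
-\langle\gamma^*,(\underline\gamma,1)\rangle=-\langle\underline n,\underline\gamma\rangle+\langle\gamma,\vec n\rangle=\gamma_n ,
\]
since $(\vec n)_n=1$. This gives $(\gamma^*)^*=\gamma$.
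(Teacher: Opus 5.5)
Your proposal is correct and follows the paper's proof step for step. It uses induction on $i$ by differentiating the vanishing pairings, a dimension count for the spanning claim, the iterated (Leibniz) derivative of $\langle \gamma, \vec{n}\rangle$ together with $(\vec{n}^{(j)})_n=0$ for $j \geq 1$, and a direct check that $(\underline{\gamma},1)$ is the normalised normal of the osculating hyperplane of $\gamma^\ast$. You improve on the paper in two places. First, you actually prove the linear independence of $\vec{n}, \dots, \vec{n}^{(j)}$ using the triangular pairing matrix with diagonal entries $\pm\langle \gamma^{(n)}, \vec{n}\rangle \neq 0$, whereas the paper only asserts that this follows from non-degeneracy. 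Second, your regularity caveat is justified: $C^n$ alone only gives $\vec{n} \in C^1$, not the higher derivatives of $\vec{n}$ that the lemma uses.
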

\begin{proof}
We show the first part by induction over $i$,  with the $i=0$ case following straight from the definition of $\vec{n}$.  So assuming we have shown this claim for a given $i$,  we note that then that for each $j \geq 1$ such that $i+1+j \leq n-1$,
\[
0=\frac{d}{dt} \langle \gamma^{(j)}(t),  \vec{n}^{(i)}(t) \rangle=\langle \gamma^{(j+1)}(t),  \vec{n}^{(i)}(t) \rangle+\langle \gamma^{(j)}(t),  \vec{n}^{(i+1)}(t) \rangle.
\]
As by the induction assumption,  
\[
\langle \gamma^{(j+1)}(t),  \vec{n}^{(i)}(t) \rangle=0,
\]
we find that indeed
\[
\langle \gamma^{(j)}(t),  \vec{n}^{(i+1)}(t) \rangle=0
\]
for all $j \geq 1$ such that $i+1+j \leq n-1$.  

Because $\gamma$ is non-degenerate,  $\dot{\gamma}, \dots, \gamma^{(n)}$ are independent,  which also implies that $\vec{n}, \dots,  \vec{n}^{(n-1)}$ are linearly independent.  As the property above implies that
\[
\vec{n}, \dots,  \vec{n}^{(i)} \in V_{n-1-i}^\perp
\] 
and $V_{n-1-i}^\perp$ has dimension 
\[
n-(n-1-i)=i+1, 
\]
we see that they indeed span  $V_{n-1-i}^\perp$.

We now show that 
\[
\frac{d^j }{dt^j}\langle \gamma,  \vec{n} \rangle=\langle \underline{\gamma},  \vec{n}^{(j)} \rangle
\]
for all $1 \leq j \leq n-1$.  To see this, calculate
\[
\frac{d}{dt} \langle \gamma, \vec{n} \rangle=\langle \dot{\gamma}, \vec{n} \rangle +\langle \gamma, \dot{\vec{n}} \rangle=\langle \gamma,  \dot{\vec{n}} \rangle=\langle \underline{\gamma},  \dot{\underline{n}} \rangle,
\]
where the second equality is because $\langle \dot{\gamma}, \vec{n} \rangle=0$ and the third equality because $(\vec{n})_n=1$,  both by definition of $\vec{n}$.  Iterating the same calculation shows that indeed
\[
\frac{d^j }{dt^j}\langle \gamma,  \vec{n} \rangle=\langle \underline{\gamma},  \underline{n}^{(j)} \rangle
\]
for all $1 \leq j \leq n-1$. 

Finally,  to establish that $(\gamma^\ast)^\ast=\gamma$,  we need to show that
\[
\langle (\underline{\gamma}, 1),  (\gamma^\ast)^{(j)} \rangle=0
\]
for all $1 \leq j \leq n-1$ and that 
\[
\gamma_n=-\langle (\underline{\gamma}, 1),  \gamma^\ast \rangle.
\]
For the latter, note that 
\[
\langle (\underline{\gamma}, 1),  \gamma^\ast \rangle=\langle \underline{\gamma},  \underline{n} \rangle-\langle \gamma,  \vec{n} \rangle=-\gamma_n,
\]
while for the former, by the third assertion of the lemma,
\[
\langle (\underline{\gamma}, 1),  (\gamma^\ast)^{(j)} \rangle=\langle \underline{\gamma},  \underline{n}^{(j)} \rangle+\frac{d^j \gamma^\ast_n}{dt^j}=\langle \underline{\gamma},  \underline{n}^{(j)} \rangle-\langle \underline{\gamma},  \underline{n}^{(j)} \rangle=0,
\]
where we use that for any $j \geq 1$,  $\vec{n}^{(j)}=(\underline{n}^{(j)}, 0)$. Thus,  we see that indeed $(\gamma^\ast)^\ast=\gamma$.  
\end{proof}

\subsection{Transformation of boxes and convex distance functions} \label{subsec::boxtransform}
For $\eps_1 \geq \dots \geq \eps_n$ and an orthonormal basis $v_1, \dots, v_n$ such that $(v_n)_n\geq c_0$ for some absolute $c_0$,  we define the box around $y$ with respect to the basis and these parameters as
\[
B=B(y,  v_i, \eps_i)=\{x \in \R^n: \forall 1 \leq i \leq n \; |\langle x-y, v_i \rangle| \leq \eps_i \}.
\]

Define $H_B:=y+\langle v_1, \dots, v_{n-1} \rangle$ and let $x_B:=x_{H_B}$ be the point in the dual corresponding to $H_B$.  We let $w_i$ for $1 \leq i \leq n-1$ be the vectors in projective space corresponding to rotating a given hyperplane in direction $v_i$; that is,  rotating $\langle v_i, v_n \rangle \cong \R^2$ and leaving all other coordinates fixed.  Concretely,  $(w_i)_n=0$ and $w_i$ is given by
\[
(w_i)_j=\frac{1}{(1+v_i)_n} (\underline{n_{H_B}}+v_i)_j.
\]
We also set $w_n:=(0, \dots, 0,1)$ and $\eps_0:=1$. 

Then, we define the \textit{dual box} $\widetilde{B}$ by
\[
\widetilde{B}=\left\{z \in \R^n \Big| \forall 1 \leq i \leq n-1 :\; |\langle z-x_B,  w_i \rangle| \leq \frac{\eps_n}{\eps_{i}},  \; |\langle z-x_B,  w_n \rangle| \leq \eps_n \right\}.
\]
We also define the scaled dual box by $\widetilde{B}_r:=x_B+r (\widetilde{B}-x_B)$.

\begin{remark}
In the above,  we have used the vector $(0, \dots, 0, 1)$ to induce projective duality.  We could similarly take any other unit vector $v$ with the property that $\langle v,  e_n \rangle \geq c_0$ for example,  by precomposing with a rotation sending $v$ to $(0, \dots, 0, 1)$ and using the established projective duality.
\end{remark}

We let $\gamma$ be a non-degenerate curve in $\R^n$ and let $V_j=V_j(t)$ be its $j$-th osculating subspace at point $\gamma(t)$ (i.e. the subspace spanned by $\gamma^{(i)}(t),  1 \leq i \leq j$).  We now pick bases $v_1,  \dots,  v_n$ at time $t$ of the orthogonal complements $V_j^\perp$ such that $V_{n-j}^\perp=\Span(v_{n-j+1}, \dots, v_n)$ as follows: For fixed $t$,  let $v_n$ be the vector $\vec{n}$ defined in Section \ref{subsec::basics},  that is,  the vector orthogonal to $V_{n-1}$ such that $(v_n)_n=1$;  this is possible by the choice of coordinates (The definition of the vectors $v_i$ technically involves a choice of the ordering of the coordinates - here we take the order $n \rightarrow n-1 \rightarrow n-2 \rightarrow \dots 2 \rightarrow 1$  from above \eqref{eq::coordorder} on \pageref{eq::coordorder}).  

Similarly,  we let $v_{n-1}$ be the vector such that 
\[
\Span(v_{n-1},  v_n)=V_{n-2}^\perp 
\]
and $(v_{n-1})_n=0$,  $(v_{n-1})_{n-1}=1$,  which is again possible by the way we chose the coordinates.  For general $1 \leq i \leq n$,  we let $v_i$ be the vector such that
\[
\Span(v_{i},  \dots, v_n)=V_{i-1}^\perp,
\]
$(v_i)_i=1$ and $(v_i)_j=0$ for $j>i$.  It is possible to pick such vectors by the way in which we arranged the coordinates earlier,  and moreover,  we have $\Vert v_i \Vert \lsim 1$,  with the implicit constant only depending on the dimension $n$ and the constant $c_0$ from above.  

\begin{lemma} \label{lem::vjrelation}
With this definition,  for each $t$ and each $1 \leq i \leq n-1$,
\[
v_{i}(t)=\frac{1}{(\dot{v}_{i+1}(t))_{i}} \dot{v}_{i+1}(t).
\]
\end{lemma}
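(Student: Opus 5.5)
The plan is to read off the claimed identity from two facts: $\dot v_{i+1}(t)$ lies in $V_{i-1}^\perp$, and it vanishes in coordinates $i+1,\dots,n$. The triangular normalisation of the basis $v_i,\dots,v_n$ of $V_{i-1}^\perp$ then forces $\dot v_{i+1}$ to be a multiple of $v_i$, and it only remains to show that this multiple is non-zero. Throughout I take for granted that the $v_j(t)$ are differentiable in $t$. This should follow from $\gamma$ being $C^n$, since each $v_j$ is obtained from $\dot\gamma,\dots,\gamma^{(n)}$ by solving a linear system whose coefficient matrix is invertible thanks to the coordinate choice \eqref{eq::coordorder}.

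\textbf{Step 1 (vanishing coordinates).} By construction $(v_{i+1})_{i+1}=1$ and $(v_{i+1})_j=0$ for $j>i+1$, identically in $t$. Differentiating gives $(\dot v_{i+1})_j=0$ for all $j\geq i+1$.

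\textbf{Step 2 (orthogonality).} Since $v_{i+1}\in V_i^\perp$, we have $\langle \gamma^{(j)},v_{i+1}\rangle\equiv 0$ for all $1\leq j\leq i$. Fix $j\leq i-1$ and differentiate:
\[
0=\langle \gamma^{(j+1)},v_{i+1}\rangle+\langle \gamma^{(j)},\dot v_{i+1}\rangle .
\]
The first term vanishes because $j+1\leq i$, so $\langle\gamma^{(j)},\dot v_{i+1}\rangle=0$ for all $j\leq i-1$. Hence $\dot v_{i+1}\in V_{i-1}^\perp=\Span(v_i,\dots,v_n)$. This is the same induction-by-differentiation trick as in Lemma \ref{lem::projdualcurve}.

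\textbf{Step 3 (triangularity).} Write $\dot v_{i+1}=\sum_{k=i}^n c_k v_k$. The vector $v_n$ is the only basis vector with a non-zero $n$-th coordinate, and $(v_n)_n=1$, so Step 1 gives $c_n=0$. Proceeding downwards through the coordinates $n-1,\dots,i+1$ in the same way gives $c_k=0$ for all $k>i$. Thus $\dot v_{i+1}=c_i v_i$, and comparing $i$-th coordinates (using $(v_i)_i=1$) gives $c_i=(\dot v_{i+1})_i$.

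\textbf{Step 4 (non-vanishing of $c_i$).} This is the only real obstacle, since it is needed to divide by $(\dot v_{i+1})_i$ and is where non-degeneracy enters. Suppose $c_i=0$, so that $\dot v_{i+1}=0$ at time $t$. Differentiating $\langle\gamma^{(i)},v_{i+1}\rangle\equiv0$ gives
\[
\langle\gamma^{(i+1)},v_{i+1}\rangle=-\langle\gamma^{(i)},\dot v_{i+1}\rangle=0,
\]
so $v_{i+1}\in V_{i+1}^\perp=\Span(v_{i+2},\dots,v_n)$. But $v_{i+1}$ has $(i+1)$-st coordinate $1$, while every vector in $\Span(v_{i+2},\dots,v_n)$ has $(i+1)$-st coordinate $0$ by the triangular structure. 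This contradiction shows $c_i\neq0$. Here we use that non-degeneracy of $\gamma$ makes $V_{i+1}$ genuinely $(i+1)$-dimensional, so that $V_{i+1}^\perp$ is spanned by $v_{i+2},\dots,v_n$. Combining Steps 3 and 4 yields $v_i=\dot v_{i+1}/(\dot v_{i+1})_i$.
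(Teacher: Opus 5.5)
Your strategy is sound and differs from the paper's, but Step 4 as written rests on a false claim. You assert that every vector in $\Span(v_{i+2},\dots,v_n)$ has vanishing $(i+1)$-st coordinate ``by the triangular structure''. The normalisation only gives $(v_k)_k=1$ and $(v_k)_j=0$ for $j>k$; the entries of $v_k$ in positions below $k$ are unconstrained. For example, for the moment curve $(t,t^2,t^3)$ near $t=0$ one gets $v_3=(3t^2,-3t,1)$, whose second coordinate is non-zero for $t\neq0$. In Step 3 you use the triangularity in the right direction; in Step 4 it got reversed.

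The conclusion $v_{i+1}\notin V_{i+1}^\perp$ is still true and easy to recover in either of two ways. You can run the back-substitution of Step 3: if $v_{i+1}=\sum_{k\geq i+2}d_kv_k$, reading off coordinates $n,n-1,\dots,i+2$ forces every $d_k=0$, contradicting $(v_{i+1})_{i+1}=1$. Or you can count dimensions: $\Span(v_{i+1},\dots,v_n)=V_i^\perp$ has dimension $n-i$, while $V_{i+1}^\perp$ has dimension $n-i-1$ by non-degeneracy. With this repair your argument is complete. Your computation then shows directly that $\langle\gamma^{(i)},\dot v_{i+1}\rangle=-\langle\gamma^{(i+1)},v_{i+1}\rangle\neq0$, so no contradiction argument is needed.

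As for the route, the paper reduces to the same two properties plus uniqueness of the normalised vector, which is what your Step 3 proves. The two properties are that $(\dot v_{i+1})_j=0$ for $j>i$, and that $\langle\dot v_{i+1}\rangle_\R+V_i^\perp=V_{i-1}^\perp$. For the spanning property, the paper expands $v_{i+1}=\sum_{j=i+1}^n a_j\vec n^{(n-j)}$ in the basis of $V_i^\perp$ supplied by Lemma \ref{lem::projdualcurve}. It notes $a_{i+1}\neq0$ by exactly the dimension count above, and differentiates to get $\dot v_{i+1}=a_{i+1}\vec n^{(n-i)}+b$ with $b\in V_i^\perp$. This gives membership in $V_{i-1}^\perp$ and non-vanishing at once, but it needs the dual-curve basis and differentiability of the coefficient functions $a_j$. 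You instead differentiate the defining relations $\langle\gamma^{(j)},v_{i+1}\rangle\equiv0$ directly. This is more self-contained: it uses only differentiability of $v_{i+1}$ and the osculating flag of $\gamma$ itself, not Lemma \ref{lem::projdualcurve}.
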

\begin{proof}
It is obvious that the choice of $v_i$ above is unique,  so if we can show that $\dot{v}_{i+1}$ has the property that $\dot{v}_{i+1},  v_{i+1}, \dots, v_n$ span $V_{i-1}^\perp$ and $(\dot{v_{i+1}})_j=0$ for $j>i$,  we are done.  

For the latter property,  note that as $(v_{i+1})_j=\delta_{j=i+1}$,  
\[
\frac{d(v_{i+1})_j}{dt}=0
\]
for $j=i+1, \dots, n$.  For the former property,  write
\[
v_{i+1}(t)=\sum_{j=i+1}^{n} a_j(t) \vec{n}^{(n-j)}(t),
\]
which is possible because by induction,  $v_{i+1}(t) \in V_{i}^\perp(t)$,  while by  Lemma \ref{lem::projdualcurve},  $V_{i}^\perp(t)$ is spanned by $\vec{n}^{(l)}(t)$ with $l=0, \dots, n-1-i$.  Moreover, because $v_{i+1}(t), \dots, v_n(t)$ span $V_{i}^\perp(t)$,  we must have that $a_{i+1}(t) \neq 0$. Because the coefficient functions $a_j$ are implicitly defined through differentiable equations and we have restricted the interval to be away from where they become singular,  the functions $a_j$ are differentiable (and bounded in size). 

Differentiating,  we find
\[
\dot{v}_{i+1}(t)=\sum_{j=i+1}^{n} \dot{a}_j(t) \vec{n}^{n-j}(t)+a_j(t) \vec{n}^{n-j+1}(t)=a_{i+1}(t)\vec{n}^{n-i}(t)+b(t),
\]
where $b(t) \in V_{i}^\perp(t)$.  As $a_{i+1}(t) \neq 0$ as mentioned above,  we see that indeed
\[
\langle \dot{v}_{i+1}(t) \rangle_\R+V_{i}^\perp(t)=V_{i-1}^\perp(t).
\]
\end{proof}

We then define vectors $w_0=(1, 0, \dots, 0) \in \R^{n+1}$ and
\[
w_j=(-\langle \gamma, v_j \rangle,  v_j) \in \R^{n+1}.
\]

As an example,  we then have that $w_{n}=(\gamma_n^\ast(t),  \underline{\gamma}^\ast(t), 1)$.  As $v_j$ and $\gamma$ depend on $t$,  so do the $w_j$. 

Combining Lemma \ref{lem::vjrelation} and Lemma \ref{lem::projdualcurve},  we get that for each $1 \leq i \leq n-1$,
\begin{equation} \label{eq::wjrelation}
w_{i}(t)=\frac{1}{(\dot{w}_{i+1}(t))_{i}} \dot{w}_{i+1}(t).
\end{equation}
Indeed,  in all but the $0$-th coordinate,  this is exactly Lemma \ref{lem::vjrelation},  while 
\[
\frac{d}{dt}\langle \gamma, v_i \rangle=\langle \gamma, \dot{v}_i \rangle
\]
because
\[
\langle \dot{\gamma}, v_i \rangle=\sum_{j=i}^{n} a_j(t) \langle \dot{\gamma},\dot{\vec{n}}^{(n-j)}(t)=0
\]
 for all $2 \leq i \leq n$ by Lemma \ref{lem::projdualcurve}.

Define the matrix
\[
A=A(t):=(w_0, \dots, w_n),
\]
which is upper triangular.  

Now, define a different sets of vectors $\widetilde{w_j}$ on $\R^{n+1}$ (indexed by $0,  1, \dots, n$) as follows.  Set 
\[
\widetilde{w_n}:=(1,  \gamma),
\]
with the $1$ in the $0$-th coordinate, and for $1 \leq i <n$,
\[
\widetilde{w_i}:=\frac{1}{a_i(t)} \frac{d \widetilde{w_{i+1}}}{dt},
\]
where $a_i(t)=\left(\frac{d \widetilde{w_{i+1}}}{dt} \right)_{n-i}$ is such that $(\widetilde{w_{i}})_{n-i}=1$ (replicating the structure we saw in Lemma \ref{lem::vjrelation}).  We also set $\widetilde{w_0}=(0, \dots, 0, 1)$.  

This makes the matrix 
\[
\widetilde{A}:=(\widetilde{w_n},  \widetilde{w_{n-1}},  \dots, \widetilde{w_1}, \widetilde{w_0})
\]
lower triangular.  

\begin{lemma} \label{lem::wj_n}
Let $\widetilde{v_j}$ be the vectors on $\R^n$ such that $(\widetilde{v_j})_i=(\widetilde{w_j})_i$ for all $0 \leq i \leq n-1$.  For any $1 \leq j \leq n-1$,  the vector $v_{n-j}$ can be written as a linear combination of $\underline{\dot{\gamma}}, \dots,  \underline{\gamma}^{(j)}$.

Moreover, for any $1 \leq j \leq n$,
\begin{equation} \label{eq::gammatilde}
(\widetilde{w_j})_n=-\langle \widetilde{\gamma},  \widetilde{v_j} \rangle,
\end{equation}
where  
\[
\widetilde{\gamma}:=(-\langle \gamma, \vec{n} \rangle, \underline{n})
\]
(which is up to permutation of the coordinates the same as the dual curve $\gamma^\ast$). 
\end{lemma}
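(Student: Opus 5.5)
Both assertions will be proved by induction on $j$, descending from $\widetilde{w_n}=(1,\gamma)$ via the recursion $\widetilde{w_i}=a_i^{-1}\frac{d}{dt}\widetilde{w_{i+1}}$. The bookkeeping of indices is slightly awkward: $\widetilde{w_j}$ lives in $\R^{n+1}$ with coordinates indexed $0,\dots,n$, and $\widetilde{v_j}$ keeps coordinates $0,\dots,n-1$. So $\widetilde{v_n}=(1,\underline{\gamma})$ and $(\widetilde{w_n})_n=\gamma_n$.

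\emph{First assertion.} Differentiating $\widetilde{w_n}$ gives $(0,\dot\gamma)$, and each further derivative only adds higher derivatives of $\gamma$ with the $0$-th coordinate staying $0$. Inductively, $\widetilde{w_{n-j}}$ is therefore a linear combination of $(0,\dot\gamma),\dots,(0,\gamma^{(j)})$ whose top coefficient is nonzero; the coefficients come from derivatives of the normalisations $a_i$. The claim about $v_{n-j}$ should then follow by identifying $\widetilde{v_{n-j}}$ with the appropriately normalised vector. The identification goes through the conditions characterising the basis: orthogonality relations from Lemma~\ref{lem::projdualcurve} and the triangular normalisation $(\cdot)_{n-j}=1$ with vanishing later coordinates. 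Concretely, the span of $\underline{\dot\gamma},\dots,\underline{\gamma}^{(j)}$ is $j$-dimensional by non-degeneracy and the coordinate choice \eqref{eq::coordorder}. I expect the statement to be read as saying that the vectors built this way have the same triangular structure as the $v_i$ from Lemma~\ref{lem::vjrelation}. I would verify it by checking the defining span and normalisation conditions, mirroring the uniqueness argument of Lemma~\ref{lem::vjrelation}.

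\emph{Second assertion.} Here I would verify \eqref{eq::gammatilde} at $j=n$ and then propagate it by differentiation. Write $\widetilde\gamma=(-\langle\gamma,\vec n\rangle,\underline n)$. Then
\[
\langle \widetilde\gamma,\widetilde{v_n}\rangle=-\langle\gamma,\vec n\rangle+\langle\underline n,\underline\gamma\rangle=-\gamma_n,
\]
using $(\vec n)_n=1$. This gives $(\widetilde{w_n})_n=\gamma_n=-\langle\widetilde\gamma,\widetilde{v_n}\rangle$.

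For the induction step, assume $(\widetilde{w_{i+1}})_n=-\langle\widetilde\gamma,\widetilde{v_{i+1}}\rangle$ and differentiate:
\[
\tfrac{d}{dt}(\widetilde{w_{i+1}})_n=-\langle\dot{\widetilde\gamma},\widetilde{v_{i+1}}\rangle-\langle\widetilde\gamma,\dot{\widetilde{v_{i+1}}}\rangle .
\]
Dividing by $a_i$ gives the claim for $i$, provided $\langle\dot{\widetilde\gamma},\widetilde{v_{i+1}}\rangle=0$ for $i+1\ge 2$. This orthogonality is where the actual content sits. We have $\dot{\widetilde\gamma}=(-\langle\gamma,\dot{\vec n}\rangle,\dot{\underline n})$, using $\langle\dot\gamma,\vec n\rangle=0$. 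Pairing with $\widetilde{v_{i+1}}$, which by the first part is $(0,\underline{\gamma}^{(\cdot)}\text{-combination})$ for $i+1<n$ and $(1,\underline\gamma)$ for $i+1=n$, gives in the first case combinations of $\langle\underline{\gamma}^{(k)},\dot{\underline n}\rangle=\langle\gamma^{(k)},\dot{\vec n}\rangle$ with $k\le n-i-1$. These vanish by Lemma~\ref{lem::projdualcurve} as long as $k+1\le n-1$; here we use $\dot{\vec n}=(\dot{\underline n},0)$. In the case $i+1=n$ the pairing is $-\langle\gamma,\dot{\vec n}\rangle+\langle\underline\gamma,\dot{\underline n}\rangle=0$ by the same identity.

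The main obstacle is the index bookkeeping in this orthogonality step. The recursion for $\widetilde{w_j}$ produces derivatives of $\gamma$ up to order $n-j$, and these must stay within the range $k+1\le n-1$ where Lemma~\ref{lem::projdualcurve} applies. For $j=1$ this is borderline: the relevant orthogonality is needed only for $\widetilde{v_{2}}$, involving derivatives up to order $n-2$, which is fine. I would check this edge case carefully, and also the extra terms coming from the derivatives $\dot a_i$, which remain in the lower span and are harmless.
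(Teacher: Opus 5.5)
Your proof is correct and is essentially the paper's: the first assertion comes from repeatedly differentiating $\widetilde{w_n}=(1,\gamma)$, and \eqref{eq::gammatilde} from the base case $j=n$ plus descending induction, where $\dot{\widetilde{\gamma}}=(-\langle \underline{\gamma},\dot{\underline{n}}\rangle,\dot{\underline{n}})$ and Lemma~\ref{lem::projdualcurve} give $\langle \dot{\widetilde{\gamma}},\widetilde{v_{i+1}}\rangle=0$ for $i\geq 1$ (your index check $k\leq n-i-1\leq n-2$ is exactly what is needed). Discard the proposed ``identification'' of $\widetilde{v_{n-j}}$ with a vector normalised by $(\cdot)_{n-j}=1$ with vanishing later coordinates: the $v_{n-j}$ in the statement is a slip for $\widetilde{v_{n-j}}$ (this is how the assertion is used in Lemma~\ref{lem::dualmatrix}), the original $v_{n-j}$ is a normal vector in $V_{n-j-1}^\perp$ and in general not a combination of derivatives of $\gamma$, and $\widetilde{v_{n-j}}$ has the opposite triangular shape (entry $1$ in coordinate $j$, zeros in coordinates $0,\dots,j-1$), so your differentiation argument alone already proves the first assertion.
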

\begin{proof}
The first assertion is obvious once we recall that $v_n=(1, \underline{\gamma})$ and use the iterative definition of the $w_j$.

For the second assertion,  note that by Lemma \ref{lem::projdualcurve},
 \[
\dot{\widetilde{\gamma}}=(-\langle \underline{\gamma},  \dot{\underline{n}} \rangle,  \dot{\underline{n}}),
 \]
 so in particular,
 \[
 \langle \dot{\widetilde{\gamma}}, v_n \rangle=-\langle \underline{\gamma},  \dot{\underline{n}} \rangle+\langle \underline{\gamma},  \dot{\underline{n}} \rangle=0.
 \]
By the first assertion and the first statement in Lemma \ref{lem::projdualcurve}, we also have that
 \[
  \langle \dot{\widetilde{\gamma}}, v_j \rangle=0
 \]
 for all $2 \leq j \leq n-1$.  From this,  we see that
 \begin{equation} \label{eq::wj_n}
 \frac{d}{dt} \langle \widetilde{\gamma}, \widetilde{v_j} \rangle=\langle \widetilde{\gamma}, \dot{\widetilde{v_j}} \rangle
 \end{equation}
 for all $2 \leq j \leq n-1$.  
 
To see the second assertion from this,  recall that we have seen
\[
(\widetilde{w_j})_n=\gamma_n=-\langle (\underline{\gamma}, 1), \gamma^\ast \rangle=-\langle \widetilde{\gamma},  \widetilde{v_n} \rangle
\]
 in the proof of Lemma \ref{lem::projdualcurve},  so the claim is true for $j=n$.  For smaller $j$,  it is seen by using \eqref{eq::wj_n} while inducting down.
\end{proof}
From this,  we can see the next lemma showing that this definition leads to the dual matrix.
\begin{lemma} \label{lem::dualmatrix}
With these definitions,  $\widetilde{A}=\left(A^{-1}\right)^T$; or equivalently,  for any $0 \leq i, j \leq n$,  
\[
\langle w_i,  \widetilde{w_j} \rangle=\delta_{i+j=n}.
\]
\end{lemma}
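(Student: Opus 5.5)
The plan is to verify the relations $\langle w_i,\widetilde{w_j}\rangle=\delta_{i+j=n}$ directly, using triangularity for most index pairs and induction in $j$ through the derivative recursions for the rest. Once all the pairings are known, $\widetilde{A}^TA=\mathrm{Id}$ follows after reindexing the columns of $\widetilde{A}$ ($\widetilde{w_n}$ sits in column $0$, and so on). That gives $\widetilde{A}=(A^{-1})^T$.

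\emph{Step 1: trivial pairings.} Index coordinates of $\R^{n+1}$ by $0,\dots,n$. Then $w_0=e_0$, and for $i\geq 1$ the vector $w_i=(-\langle\gamma,v_i\rangle,v_i)$ has $(w_i)_i=1$ and $(w_i)_k=0$ for $k>i$. Dually, $\widetilde{w_0}=e_n$, $\widetilde{w_n}=(1,\gamma)$, and $\widetilde{w_j}$ has a $1$ in slot $n-j$ and zeros in slots $0,\dots,n-j-1$ for $1\le j<n$. This zero pattern holds because differentiating $(1,\gamma)$ kills slot $0$, and each further derivative kills the next slot, since the previous normalised entry was the constant $1$.

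Two consequences follow. If $i<n-j$, the support of $w_i$ lies in slots $\le i$, where $\widetilde{w_j}$ vanishes, so the pairing is $0$. If $i=n-j$, the only overlapping slot is $n-j$, where both vectors equal $1$, so the pairing is $1$. This leaves the pairs with $i+j>n$. The cases involving $w_0$ or $\widetilde{w_0}$ are immediate: $\langle e_0,\widetilde{w_j}\rangle=0$ for $j<n$, and $\langle w_i,e_n\rangle=(w_i)_n=\delta_{i=n}$.

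\emph{Step 2: the base case $j=n$.} Here $\langle w_i,\widetilde{w_n}\rangle=-\langle\gamma,v_i\rangle+\langle v_i,\gamma\rangle=0$ for every $i\ge1$, directly from the definition of $w_i$.

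\emph{Step 3: induction downward in $j$.} Assume $\langle w_i,\widetilde{w_{j+1}}\rangle=\delta_{i+j+1=n}$ for all $i$, and differentiate in $t$:
\[
0=\frac{d}{dt}\langle w_i,\widetilde{w_{j+1}}\rangle=\langle \dot w_i,\widetilde{w_{j+1}}\rangle+a_j\langle w_i,\widetilde{w_j}\rangle .
\]
By \eqref{eq::wjrelation}, $\dot w_i$ is a multiple of $w_{i-1}$; for $i=1$ it is instead a combination of $w_0$ and $w_1$, which is still controlled. Hence $\langle\dot w_i,\widetilde{w_{j+1}}\rangle$ is a multiple of $\langle w_{i-1},\widetilde{w_{j+1}}\rangle=\delta_{i+j=n}$. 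For $i+j>n$ this vanishes, so $\langle w_i,\widetilde{w_j}\rangle=0$, using $a_j\neq0$. The case $i+j=n$ was already settled in Step 1.

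\emph{Main obstacle.} Care is needed at $w_n$, $w_1$ and $w_0$, where \eqref{eq::wjrelation} does not literally apply. For these I would instead use Lemma~\ref{lem::wj_n}, namely that the last coordinates satisfy $(\widetilde{w_j})_n=-\langle\widetilde\gamma,\widetilde{v_j}\rangle$, together with Lemma~\ref{lem::projdualcurve}. The point is to check directly that $\langle w_n,\widetilde{w_j}\rangle=0$ for $j\geq1$. This amounts to $(\widetilde{w_j})_n$ cancelling against the $\gamma^\ast$-components of $w_n$, which is precisely the content of \eqref{eq::gammatilde}. If that direct check proves messy, the alternative is symmetric: run the same differentiation argument with the roles of $w$ and $\widetilde w$ swapped, inducting upward in $i$ from $w_n=(\gamma^\ast_n,\underline{\gamma}^\ast,1)$.
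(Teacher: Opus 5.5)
Your argument is correct, and it takes a genuinely different route from the paper's.

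\textbf{The paper's route.} After reducing to $i+j>n$ by unitriangularity, the paper checks the vanishing directly in three cases:
\begin{itemize}
\item $j=n$, from the definition of $w_i$;
\item $i=n$, from the last-coordinate identity \eqref{eq::gammatilde} of Lemma~\ref{lem::wj_n};
\item $1\le i,j\le n-1$, by writing $v_i$ in terms of $\dot{\vec n},\dots,\vec n^{(n-i)}$ (Lemma~\ref{lem::vjrelation}) and $\widetilde{w_j}$ in terms of $\dot\gamma,\dots,\gamma^{(n-j)}$, and then using the orthogonality $\langle\gamma^{(b)},\vec n^{(a)}\rangle=0$ for $a+b\le n-1$ from Lemma~\ref{lem::projdualcurve}.
\end{itemize}

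\textbf{Your route.} You instead differentiate the relations $\langle w_i,\widetilde{w_{j+1}}\rangle=\delta_{i+j+1=n}$ themselves and push them down in $j$. You use only \eqref{eq::wjrelation}, the recursive definition of $\widetilde{w_j}$, the support pattern, and the trivial case $j=n$. This is the same mechanism that proves Lemma~\ref{lem::projdualcurve}, run directly on the $(n+1)$-vectors.

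\textbf{Comparison.} Your version gives a uniform, self-contained argument that never needs Lemma~\ref{lem::wj_n}. In fact, the row $i=n$ of your induction recovers \eqref{eq::gammatilde} for $j<n$. The paper's route, in exchange, shows the vanishing explicitly as the duality of the osculating flags of $\gamma$ and $\gamma^\ast$. That is how the dual box is later identified with the box adapted to the dual curve.

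\textbf{Your ``main obstacle'' is not actually one.} Equation \eqref{eq::wjrelation} with $i=n-1$ says exactly that $\dot w_n$ is a multiple of $w_{n-1}$. So Step~3 already handles $i=n$, via $\langle w_{n-1},\widetilde{w_{j+1}}\rangle=0$ for $j\ge 1$. Neither the direct check through Lemma~\ref{lem::wj_n} nor the symmetric induction is needed.

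Likewise, for $j\le n-1$ the indices $i\in\{0,1\}$ never satisfy $i+j>n$, so your remark about $\dot w_1$ is moot. In any case $v_1=(1,0,\dots,0)$ is constant, so $\dot w_1=-(\dot\gamma)_1\,w_0$.

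The only tacit hypothesis is $a_j\neq 0$, which is already required for $\widetilde{w_j}$ to be defined.
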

\begin{proof}
Because the matrices $\widetilde{A}^T$ and $A$ are upper triangular with ones on the diagonal, it suffices to show that 
\[
(\widetilde{A}^T \cdot A)_{ij}=\langle w_i,  \widetilde{w_j} \rangle=0
\]
for $i$ and $j$ such that $i+j>n$.  We split into several cases. 

For $j=n$ and $i=1, \dots,  n$, we note that  
\[
\langle w_i,  \widetilde{w_n} \rangle=-\langle \gamma,  v_i \rangle+\langle v_i, \gamma \rangle=0
\]
by definition of $w_i$.  

For the  $i=n$ and $j=1, \dots, n-1$ case,  we recall that $w_n=(\ast, \underline{n}, 1)$, so this follows from the  second part of Lemma \ref{lem::wj_n}.

Finally,  for $1 \leq i, j \leq n-1$ such that $i+j>n$,  because $(w_i)_n=0$ and $(\widetilde{w_j})_0=0$,  we have that 
\[
\langle w_i,  \widetilde{w_j} \rangle=\langle v_i^\prime,  \widetilde{v_j}^\prime \rangle,
\]
where $v_i^\prime$ and $\widetilde{v_j}^\prime$ are the projection of $w_i$ and $\widetilde{w_j}$ to the space spanned by the coordinates $1, \dots, n-1$, respectively. 

By Lemma \ref{lem::vjrelation},  any $v_i^\prime$ can be written as a linear combination of $\dot{\underline{n}},  \dots, \underline{n}^{(n-i)}$ , while by the first assertion of Lemma \ref{lem::wj_n},  any $\widetilde{v_j}^\prime$ can be written as a linear combination of $\dot{\underline{\gamma}},  \dots, \underline{\gamma}^{(n-j)}$.  By Lemma \ref{lem::projdualcurve},
\[
\langle \vec{n}^{(n-i)}, \gamma^{(n-j)} \rangle=0
\]
for $i, j \leq n-1$ and $i+j>n$; and moreover, for any $i \leq n-1$, 
\[
\langle \vec{n}^{(n-i)}, \gamma^{(n-j)} \rangle=\langle \underline{n}^{(n-i)}, \underline{\gamma}^{(n-j)} \rangle
\]
because $(\vec{n}^{(l)})_n=0$ for all $l \geq 1$. Taken together,  this shows the claim in the case that $1 \leq i, j \leq n-1$ and $i+j>n$.
\end{proof}

\section{Duality principle (I): Boxes} \label{sec::dualboxes}
In this section,  we show a purely volume based duality result (Lemma \ref{lem::boxdual}) and generalise this to a duality result about the number of rational points in each of the boxes.  We end up with a satisfying result detailing that in the dense setting,  any deviation from the expected number of rational points is due to rational points of small denominator being close to the dual.  Here, with small denominator we mean that the maximal denominator $\widetilde{Q}$ is small enough to be in the sparse setting.  Estimating how often this can happen in the sparse setting can then be controlled via the determinant method.  

The terminology of `dual box' is justified by the lemma below. 
\begin{lemma} \label{lem::boxdual}
There is an $N$ and constants $c, c_0,  C$ depending only on the dimension and unit vectors $u_1,  \dots, u_N$ such that the following holds.  

For any hyperplane $H$, if there is a $u_j$ with $\langle u_j,  v_n \rangle \geq c_0$ and the property that $x_H$ (the to $H$ associated point under projective duality with respect to $u_j$) lies in $\widetilde{B}_c$,  then
\[
\vol(H \cap B) \gsim \eps_1 \dots \eps_{n-1}.
\]
Conversely,  if 
\[
\vol(H \cap B) \geq \frac{C}{A} \eps_1 \dots \eps_{n-1}
\]
for some $A$,  then there is a vector $u_j$ such that under projective duality induced via $u_j$,  we have that
\[
x_H \in \widetilde{B}_A \cap B(1),
\]
where $B(1)$ is the ball of radius one around the origin in dual space.
\end{lemma}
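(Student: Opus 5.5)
We reduce everything to a computation in the orthonormal frame $v_1,\dots,v_n$ of the box $B$ (the box centred at $y$ with half-widths $\eps_i$ along $v_i$). The basic observation is that a hyperplane $H$ meets $B$ in a set of $(n-1)$-volume $\sim \eps_1\cdots\eps_{n-1}$ exactly when two things hold. First, its unit normal $\nu_H$ is close to $v_n$, in the sense that $|\langle \nu_H, v_i\rangle| \lsim \eps_n/\eps_i$ for $1\le i\le n-1$. Second, its offset from $y$ in the normal direction is $\lsim \eps_n$. The plan is to prove this characterisation directly, and then to check that under projective duality (taken with respect to a coordinate direction $u_j$ making a positive angle with $v_n$) these conditions are equivalent, up to constants, to $x_H$ lying in a dilate of $\widetilde B$.

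\emph{Step 1 (choice of the $u_j$).} Take $u_1,\dots,u_N$ to be a finite $c_0$-net of the unit sphere (say $c_0=1/2$). Then for every box some $u_j$ has $\langle u_j,v_n\rangle\ge c_0$. After rotating $u_j$ to $e_n$ we are in the setting of Section \ref{subsec::boxtransform}, with $(v_n)_n\ge c_0$.

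\emph{Step 2 (the direct implication).} Suppose $x_H\in\widetilde B_c$. Writing $x_H=x_B+\sum_i\beta_i w_i$, the coefficients $\beta_i$ for $i\le n-1$ measure the tilt of the normal $(\underline{n_H},1)$ relative to $(\underline{n_{H_B}},1)$ in the direction $v_i$. The coefficient $\beta_n$ measures the vertical shift $\langle y-y_H,(\underline{n_H},1)\rangle$. Because $(v_n)_n\ge c_0$, the map from $(\underline{n_H}, \text{offset})$ to the unit normal and true offset is bi-Lipschitz with constants depending only on $c_0$. Hence $|\langle\nu_H,v_i\rangle|\lsim c\,\eps_n/\eps_i$, and the distance from $y$ to $H$ is $\lsim c\,\eps_n$.

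Now parametrise $H$ over the face spanned by $v_1,\dots,v_{n-1}$: points $y+\sum_{i<n}s_iv_i+h(s)v_n$ with $h$ affine. Then $|h(s)|\le c\eps_n+\sum_i|s_i|\,c\,\eps_n/\eps_i$, so the region $|s_i|\le\eps_i/2$ maps into $B$ once $c$ is small. This yields $\vol(H\cap B)\gsim\eps_1\cdots\eps_{n-1}$, using that the Jacobian factor $1/\langle\nu_H,v_n\rangle$ is $\sim1$.

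\emph{Step 3 (the converse).} Assume $\vol(H\cap B)\ge \frac CA\eps_1\cdots\eps_{n-1}$. Since $H\cap B$ is contained in $H$, its projection onto a coordinate face of $B$ dominates its volume up to $|\langle\nu_H,v_k\rangle|$. The volume of $H\cap B$ is therefore at most $\min_k \prod_{i\ne k}\eps_i /|\langle\nu_H,v_k\rangle|$, where the bound for each $k$ comes from projecting onto $v_k^\perp$. For $k<n$ this forces $|\langle \nu_H,v_k\rangle|\lsim A\,\eps_n/\eps_k\cdot C^{-1}\le 1$, and the $k=n$ bound is harmless. Moreover $H$ must meet $B$, so the offset is $\le \sqrt n\,\eps_1\cdot|\ldots|$. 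More carefully, the offset is bounded by $\eps_n+\sum_{i<n}\eps_i|\langle\nu_H,v_i\rangle|\lsim A\eps_n$. In particular $\nu_H$ is within a small angle of $v_n$ provided $\eps_1\le c_0$, so we can pick $u_j$ with $\langle u_j,\nu_H\rangle\ge c_0$. Translating back through the bi-Lipschitz correspondence gives $x_H\in\widetilde B_{A}$, after absorbing constants into $C$. Finally, $|x_H|\le1$ (after rescaling, i.e. intersecting with $B(1)$) because $\underline{n_H}$ is bounded by the angle condition and $B$ lies in a fixed compact region.

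\emph{Main obstacle.} The delicate point is the bookkeeping in Step 3. The dual box is defined through the specific vectors $w_i$ and the offset $\eps_n$, so the linearisation of the duality map $H\mapsto x_H$ near $H_B$ must send the frame (tilt along $v_i$, normal shift) to the frame $(w_1,\dots,w_n)$ with uniformly bounded distortion. I would first verify this by differentiating $x_H=(\underline{n_H},-\langle y_H,(\underline{n_H},1)\rangle)$ at $H_B$. A tilt in direction $v_i$ changes $\underline{n}$ by a multiple of $w_i$ and changes the last coordinate by a term involving $\langle y,\cdot\rangle$. That term must be absorbed by recentring at $x_B$, which is where the bound $|y|\lsim1$ is used. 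The curvature of the duality map, i.e. its quadratic terms, is $O(\text{tilt}^2)\le O(\eps_n/\eps_i)$, because tilts are at most $1$. It is therefore dominated within $\widetilde B_A$.
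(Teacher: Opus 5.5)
Your direct implication is essentially the paper's argument. In the converse, your projection bound $|\langle\nu_H,v_k\rangle|\,\vol(H\cap B)\le\vol(\pi_{v_k^\perp}B)\le 2^{n-1}\prod_{i\neq k}\eps_i$ is a cleaner route to the tilt bounds $|\langle\nu_H,v_k\rangle|\lsim A\eps_n/\eps_k$ than the paper's explicit slab-volume computation, and it needs no separate treatment of small $\langle\nu_H,v_n\rangle$. The gap is in transferring (tilt, offset) to the dual box, and it has two parts.

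\emph{The shear term.} Taking $y_H=y+d\,\nu_H$ one gets exactly
\[
(x_H-x_B)_n=-\frac{d}{\langle\nu_H,e_n\rangle}-\langle\underline{y},\underline{n_H}-\underline{n_{H_B}}\rangle .
\]
The second term is linear in the tilt and can be as large as $|y|A\eps_n/\eps_{n-1}$. It is not a constant, so recentring at $x_B$ cannot remove it, and $|y|\lsim 1$ does not make it $\lsim A\eps_n$. This breaks your Step 2 as well as Step 3. With the frame of Section \ref{subsec::boxtransform}, where $(w_i)_n=0$, no argument can work. Take $\R^2$ with $y=(1,0)$, $v_i=e_i$, $\eps_1=\eps$, $\eps_2=\eps^2$ and duality along $e_2$. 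The line through $y$ of slope $\eps/2$ meets $B$ in length $\sim\eps$, yet $x_H-x_B=(-\eps/2,\eps/2)\notin\widetilde{B}_A$ unless $A\gsim\eps^{-1}$. The paper avoids this by translating to $y=0$ at the outset. That tacitly uses the translation-covariant frame in which the shear $-\langle y,\cdot\rangle$ is built in, as in $w_j=(-\langle\gamma,v_j\rangle,v_j)$ for curves or the $v_i$ in the proof of Theorem \ref{thm::maindualitybox}. You need to do the same rather than try to absorb the term.

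\emph{The nonlinearity.} Your argument does not control it. The dual box is anisotropic, so a quadratic term such as $\alpha_{n-1}^2$, with $\alpha_k:=\langle\nu_H,v_k\rangle$, landing in the $w_1$-coordinate could far exceed the width $A\eps_n/\eps_1$ there. The remark that tilts are at most $1$ only handles $\alpha_i^2$ in the $i$-th coordinate. A general bi-Lipschitz change of variables does not map such boxes into constant dilates of themselves. What saves the argument is an exact identity for the gnomonic map $G(\nu)=P\nu/\langle\nu,e_n\rangle$, where $u_j$ is rotated to $e_n$ and $P$ is the projection onto the first $n-1$ coordinates:
\[
G(\nu)-G(v_n)=\frac{\langle v_n,e_n\rangle}{\langle\nu,e_n\rangle}\,DG(v_n)\Big[\sum_{k<n}\alpha_k v_k\Big].
\]
So in the frame $DG(v_n)[v_k]$ each dual coefficient is $\alpha_k$ times one common factor in $[c_0,c_0^{-1}]$. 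This is what underlies the paper's assertion $a_j\sim\theta_j$.

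\emph{Choice of $u_j$.} For large $A$ your tilt bound does not place $\nu_H$ near $v_n$; the assumption $\eps_1\le c_0$ plays no role there. Instead, as the paper does, choose $u_j$ near the bisector of $v_n$ and $\pm\nu_H$, which secures both $\langle u_j,v_n\rangle\ge c_0$ and $\langle u_j,\nu_H\rangle\ge c_0$.
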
 
\begin{proof}
Let $c_0>0$ be small and let $u_1, \dots, u_N$ be a $c_0$-net of $\mathbb{S}^{n-1}$.  For a given box $B$ and hyperplane $H$,  we can pick $u_j$ with $\langle u_j, v_n \rangle \geq c_0$ and $\langle u_j, \vec{n_H} \rangle \geq c_0$ (replacing $\vec{n_H}$ by $-\vec{n_H}$ if necessary),  and we use projective duality with respect to this $u_j$ (after rotating $u_j$ to $(0, \dots, 0, 1)$).  Writing $x_H=x_B+\sum_{j=1}^{n} \theta_j w_j$,  this gives $|\theta_j| \lsim 1$ for all $j$, where all implicit constants depend on $c_0$.  The argument below parametrises $H \cap B$ over $\Span(v_1, \dots, v_{n-1})$,  which requires $\langle \vec{n_H}, v_n \rangle \gsim 1$.  If this fails,  the same bounds follow because a hyperplane through the centre of a box cuts it in a section whose area is comparable to the volume of the box divided by its width in the direction normal to the hyperplane.

Without loss of generality,  the centre $y$ of $B$ is the origin. We first assume that $y \in H$ and write $x_H=x_B+\sum_{j=1}^{n-1} \theta_j w_j$.  We then have that
\[
\pi_{n-1} \vec{n_H}=\sum_{j=1}^{n-1} a_j v_j
\]
with $a_j \sim \theta_j$.  Because all $\theta_j$ are bounded by a small constant,  we have that $\langle \vec{n_H}, v_n \rangle \sim 1$.  

We let $\tau : V_{n-1} \to H$ be the inverse of $\pi_{n-1} : H \to V_{n-1}$ and observe that then for each vector $v=\sum_{j=1}^{n-1} t_j v_j \in V_{n-1}$, because $\langle \vec{n_H}, v_n \rangle v_n=\vec{n_H}-\sum_{j=1}^{n-1} a_j v_j$, 
\[
|\langle \tau(v),  v_n \rangle|=\frac{1}{\langle \vec{n_H}, v_n \rangle} \left|\sum_{j=1}^{n-1} \langle \tau(v),   a_j v_j \rangle \right|=\frac{1}{\langle \vec{n_H}, v_n \rangle}   \left|\sum_{j=1}^{n-1} \langle v,   a_j v_j \rangle \right| \sim \left|\sum_{j=1}^{n-1} a_j t_j \right|.
\]

For the first statement,  as long as $t_j \leq \eps_j$ for all $j$,  we have that
\[
 \left|\sum_{j=1}^{n-1} a_j t_j \right| \lsim \sum_{j=1}^{n-1}|\theta_j| \eps_j \lsim \eps_n,
 \]
 which means that as long as $t_j \leq c \eps_j$ for some small but absolute constant $c$,  the corresponding point on the plane $H$ lies in $B$.  As long as $\theta_n \leq c \eps_n$ for some small constant $c$,  its contribution in this calculation is negligible,  which implies the claim.  
 
For the second part of the lemma,  we show the following more precise statement,  which also keeps track of translations of $H$.  Let $H$ be a hyperplane through $y$,  let
\[
A:=\max_{1 \leq i \leq n-1} |\theta_i| \frac{\eps_i}{\eps_n},
\]
and for $r \in \R$ let $H_r:=H+r \vec{n_H}$.  Then there are constants $c_1, c_2$ such that
\[
\vol(H_r \cap B) \sim \frac{1}{A} \eps_1 \dots \eps_{n-1} \quad \textrm{if } |r| \leq c_1 A \eps_n,
\qquad
\vol(H_r \cap B) \lsim \frac{1}{A} \eps_1 \dots \eps_{n-1} \quad \textrm{for all } r,
\]
and $\vol(H_r \cap B)=0$ if $|r| \geq c_2 A \eps_n$,  where the volume is taken with respect to the surface measure on $H_r$.  

We again start by assuming that $r=0$.  By the calculation above,
\[
|\langle \tau(v),  v_n \rangle| \sim \left|\sum_{j=1}^{n-1} a_j t_j \right|
\]
and by the choice of $A$ we have
\[
\left|\sum_{j=1}^{n-1} a_j t_j \right|=\eps_n A \left|\sum_{j=1}^{n-1} \frac{\eta_j t_j}{\eps_j} \right|
\]
with $|\eta_j| \leq 1$ and $\eta_i=1$ for some index $i$.  The lower bound follows as in the previous case,  and for the upper bound, we find
\begin{align*}
\vol(H \cap B) &\sim \vol\left(\vec{t} \in \R^{n-1} \Bigg| \left|\sum_{j=1}^{n-1} \frac{\eta_j t_j}{\eps_j} \right| \leq A^{-1} \mathrm{\; and \;} \forall 1 \leq i \leq n-1, \; |t_i| \leq \eps_i \right)\\
&=\eps_1 \dots \eps_{n-1} \vol\left(\vec{t} \in [-1, 1]^{n-1} : \left|t_i + \sum_{j \neq i} \eta_j t_j \right| \leq A^{-1}\right) \\
&\lsim \frac{1}{A} \eps_1 \dots \eps_{n-1}.
\end{align*}

Finally,  when $r \neq 0$, we get an extra constant term of $r$ in the above sums, which changes the expression to
\[
|\langle \tau(v),  v_n \rangle| \sim \eps_n A \left|\frac{r}{\eps_n A}+\sum_{j=1}^{n-1} \frac{\eta_j t_j}{\eps_j} \right|.
\]

From this,  it is clear that 
\[
\vol(H \cap B) \sim |\langle \tau(v),  v_n \rangle| \lsim \frac{1}{A} \eps_1 \dots \eps_{n-1}
\]
for all $r$.  Moreover,  for $c_1$ small enough compared to the implicit constant,  the conclusion of $\vol(H \cap B) \sim \frac{1}{A} \eps_1 \dots \eps_{n-1}$ still holds for $|r| \leq c_1 A \eps_n$,  and for $c_2$ sufficiently big compared to the implicit constant,  for any $r \geq c_2 A \eps_n$ the set of $\vec{t}$ with $|t_j| \leq \eps_j$ for all $j$ satisfying the condition is empty.  This concludes the proof. 
\end{proof}

\begin{remark}
The lemma is stated for orthonormal bases,  but it is really a duality statement for the flag $V_1 \subset V_2 \subset \dots \subset V_n$.  As such,  it naturally carries over to all bases of $V_i$ that are not `too parallel': Indeed,  if $u_1, \dots, u_n$ are vectors in $\R^n$ such that for any $1 \leq i \leq n$,  
\[
\Vert u_{i+1} - \pi_i u_{i+1} \Vert_{\R^n} \geq c_0
\]
for some absolute $c_0$,  where $\pi_i$ is the projection from $\R^n$ onto the subspace $U_i=\Span(u_1, \dots, u_i)$,  we can define the orthonormal basis by the requirement that $v_1, \dots, v_j$ should be an orthonormal basis of $U_j$ for all $j$ (so $V_j=U_j$ in our previous notation).  Because of the condition $\eps_1 \geq \dots \geq \eps_n$ we then have that an analogous result to Lemma \ref{lem::boxdual} holds for the parallelepiped
\[
P=P(y,  u_i, \eps_i)=\{x \in \R^n \Big| \forall {1 \leq i \leq n}: \; |\rho_i(x-y)| \leq \eps_i \},
\]
where $\rho_i(x)$ is defined via $x=\sum_{i=1}^n \rho_i(x) u_i$.  This is true because by the assumption $\eps_1 \geq \dots \geq \eps_n$,
\[
P \sim B:=\{x \in \R^n \Big| \forall {1 \leq i \leq n}: \; |\langle x-y,  v_i \rangle| \leq \eps_i \},
\]
where  $P \sim B$ is to be understood in the sense that there are $c_1,  c_2$ (depending on $c_0$ and the ambient dimension) such that $c_1 (P-y) \subset B-y \subset c_2 (P-y)$.  The previous result can then be applied to the box $B$ spanned by the orthonormal basis $v_1, \dots, v_n$.  
\end{remark}

The most powerful generalisation of Lemma \ref{lem::boxdual} is Theorem \ref{thm::maindualitybox} from the introduction,  which is our central duality result.  We restate it here for convenience.
\begin{restated}{Theorem \ref{thm::maindualitybox}}
Let $n$ be an integer.  There are constants $c_0, C>0$,  an integer $N$ and integer matrices $\Phi_1, \dots, \Phi_N \in SL_n(\Z)$ with orthogonal columns, all depending only on the dimension (picking $c_0$ first,  then $N$,  and then $C$),  such that the following holds.  For any $x_0 \in \R^n$,  for any orthonormal basis $e_1,  \dots, e_n$ and for any $c_0 \geq \eps_1 \geq \dots \geq \eps_n >0$ and $Q \in \mathbb{N}$ such that 
\[
\eps_1 \cdot \dots \cdot \eps_n \geq C Q^{-(n+1)},
\]
consider the box 
\[
V:=\{x_0+\sum_{j=1}^n \beta_j e_j,  |\beta_j| \leq \eps_j \forall 1 \leq j \leq n \}.
\]
Set $ \widetilde{\eps_j}:=\frac{\eps_n}{\eps_{n-j}}$ with $\eps_0:=1$ for $1 \leq j \leq n$ and $\widetilde{Q}:=\frac{1}{\eps_n Q}$.

For any parameter $A \geq 1$,  if
\[
\mathcal{N}(Q,  V) \geq A \eps_1 \dots \eps_n Q^{n+1},
\]
the rational points in $V$ up to height $Q$ must then lie in a rational subspace of dimension $n-r$ for some $r \geq 1$.  There is then a $\Phi=\Phi_j$ from the above finite collection with $(\Phi e_n)_n \geq c_0$ such that for $V_\Phi:=x_0+\Phi (V-x_0)$, the dual box
\[
\widetilde{V}_\Phi=\left\{x_{V_\Phi}+\sum_{j=1}^n \beta_j \widetilde{\Phi e_j},  |\beta_j| \leq \widetilde{\eps_j}\right\},
\]
where $x_{V_\Phi}$ is the dual point of the hyperplane orthogonal to $\Phi e_n$ passing through $x_0$,  satisfies the following.

For $1 \leq i \leq r$, there are $B_i \geq 1$, $m_i \leq \widetilde{Q}/B_i$ such that
\[
\prod_{i=1}^r \frac{\widetilde{Q}}{B_i m_i} \gsim A
\]
and there are rational points $\vec{a_i}/m_i \in (\widetilde{V}_\Phi)_{B_i} \cap K(C)$,  where $\vec{a_i} \in \Z^n$ and $K(C)$ is the compact ball around the origin with radius $C$. The rational points $\vec{a_i}/m_i$ are in general position,  meaning that no $s+2$ of them are contained in an $s$-dimensional affine subspace.  The rational subspace of dimension $n-r$ above is $H_1 \cap \dots \cap H_r$,  where $H_i$ is the hyperplane corresponding to $\vec{a_i}/m_i$ under projective duality.

Moreover,  if the box is such that $\vec{a}/m \notin (\widetilde{V}_\Phi)_{C \widetilde{Q}/m} \cap K(C)$ for all $\Phi=\Phi_j$ and all integers $(m,  \vec{a}) \in \Z^{n+1}$,  then 
\[
\mathcal{N}(Q,  V) \gsim \eps_1 \dots \eps_n Q^{n+1}.
\]
\end{restated}
\begin{remark}
We have that
\[
\widetilde{\eps_1} \dots \widetilde{\eps_n}=\frac{\eps_n^{n+1}}{\eps_1 \dots \eps_n} \leq  \eps_n^{n+1} Q^{n+1}=\widetilde{Q}^{-(n+1)}, 
\]
so we are indeed in the sparse setting. 
\end{remark}
\begin{remark} \label{rem::Phiexplanation}
The presence of the $N$ many integer matrices $\Phi_j$ might seem mysterious,  but the necessity of their presence and the way in which they are picked can be explained geometrically.  As there are only boundedly many,  they are of course completely irrelevant for all practical applications of the theorem. 

To explain where the $\Phi_j$ are coming from,  let $L$ be the $n-r$-dimensional subspace all rationals up to height $Q$ in $V$ lie on.  If we assume for the moment that there are points as in the conclusion of the theorem,  corresponding to the successive minima of the dual convex distance function as in the proof of Theorem \ref{thm::maindualitybox},  then the associated rational planes $H_1,  \dots,  H_r$ have respective height $m_i$, $\vol(H_i \cap V) \sim \frac{1}{B_i} \eps_1 \dots \eps_{n-1}$ and satisfy
\[
H_1 \cap \dots \cap H_r=L,
\]
where 
\[
H_i=\{x \in \R^n: \langle (\underline{a_i}/m_i,  
1),  x \rangle=(\vec{a_i})_1/m_i \}.
\]

A priori,  knowing only $L$,  we could reverse engineer these hyperplanes by letting $H_1$ be the hyperplane maximizing $\vol(H_i \cap V)/\mathrm{height}(H)$ among all hyperplanes passing through $L$.  Using a similar maximizing procedure for the rest of the $H_i$ (among all rational hyperplanes that lead to $H_1 \cap \dots \cap H_{i-1} \cap H_i$ being $n-i$ dimensional),  we can define the associated hyperplanes in physical space without reference to any coordinate frame. 

The problems with inducing projective duality occur now if the normal vector $\vec{n}_{H_i}$ of one of the $H_i$ has $(\vec{n}_{H_i})_n=0$ or is very small,  for example because the rational space $L$ lies in a coordinate line.  There is no way to fix this without introducing some change in the coordinate frame like $\Phi$ to force $|(\vec{n}_{H_i})_n| \geq c_0$ for some small absolute $c_0$.  However,  by picking a collection of matrices $\Phi_j$ such that their normalised columns form a $c_0/2$-dense subset in the space of all orthornormal bases of $\R^n$, we can assure that there is always a set of coordinates $\Phi_j$ such that $|(\vec{n}_{\Phi H_i})_n| \geq c_0$ (and also $|(\Phi e_j)_n| \geq c_0$ for all $j$).  This motivates the definition of $\Phi_j$ in the below.  
\end{remark}
\begin{proof}[Proof of Theorem \ref{thm::maindualitybox}]
Let $c_0$ be a small constant.  As mentioned in Remark \ref{rem::Phiexplanation},  we let $\Phi_j \in SL_n(\Z)$ be integer matrices such that all its columns are orthogonal,  and such that the normalised columns are $c_0/2$-dense in the space of all orthogonal bases in $\R^n$.  We will consider the boxes $\Phi V$ with orthonormal bases $\{(\Phi_j e_i)_i,  j\}$ later,  but for now will just consider $e_j$. 

Define new vectors $e_j^\prime$ iteratively down from $j=n$ via the requirement that $(e_j^\prime)_j=1$,  $(e_j^\prime)_i=0$ for $i>j$ and that $e_j^\prime \in \Span(e_n,  \dots,  e_j)$.  By ordering the coordinates in the right way,  we can assure that the size of the $e_j^\prime$ remains bounded.  Let $v_0=(1, 0 \dots, 0)$ and $v_i=(-\langle x_0,  e^\prime_i \rangle,  e_i^\prime)$ and note that the $v_i$ then form an upper triangular matrix.

Now,  consider the convex distance function $F : \R^{n+1} \to \R_+$ given by
\[
F(q,  \vec{a}):=\max_{i=0, \dots, n} \left( \frac{1}{\eps_i Q}|\langle(q, \vec{a}),  v_i \rangle|  \right),
\]
where $\eps_0:=1$,  which has 
\[
\vol(\{x: F(x) \leq 1 \}) \sim \eps_1 \dots \eps_n Q^{n+1} \geq 1.
\]
A quick calculation shows that then
\[
\mathcal{N}(V, Q) \lsim \#\{(q, \vec{a}) \in \Z^{n+1}: F(q,  \vec{a}) \leq 1 \}.
\]
Let $\lambda_j$ be the successive minima of $F$ with respect to the lattice $\Z^{n+1}$; by Theorem \ref{thm::polarcount},  we know that 
\[
\#\{(q, \vec{a}) \in \Z^{n+1}: F(q,  \vec{a}) \leq 1 \} \sim \vol(\{x: F(x) \leq 1 \}) \prod_{j=1}^n \max \left(\frac{1}{\lambda^*_j},  1 \right),
\]
so under our assumption,  
\[
A C \eps_1 \dots \eps_n Q^{n+1} \leq \mathcal{N}(V, Q) \lsim \eps_1 \dots \eps_n Q^{n+1} \prod_{j=1}^n \max \left(\frac{1}{\lambda^*_j},  1 \right).
\]
Hence,  there is an $r<n$ and vectors $(\vec{a_i},  m_i) \in \Z^{n+1}$ not contained in a lower dimensional subspace such that 
\[
\prod_{i=1}^r (F^*(\vec{a_i},  m_i))^{-1} \sim \prod_{i=1}^r (\lambda^*_i)^{-1} \gsim C A.
\]

By Lemma \ref{lem::polardist} and by the same computation as in Lemma \ref{lem::dualmatrix}, 
\[
F^*(x) \sim \max \left( \frac{1}{\widetilde{\eps_i} \widetilde{Q}}|\langle x,  \widetilde{v_{n-i}} \rangle|  \right),
\]
where $\widetilde{v_{n-i}}$ forms a lower triangular matrix that is the adjoint inverse of the matrix $v_1,  \dots, v_n$ and we let the final coordinate be the one in which we have the term with $\widetilde{v_n}=(0, \dots, 0, 1)$.   We have also seen that $\widetilde{v_{n-i}}$ has the same form as $v_i$,  only this time with the dual box.  Hence,  if we knew that $m_i \neq 0$ and $|\vec{a_i}|/m_i \leq C$ for all $i$,  we could let 
\[
B_i:= \frac{\widetilde{Q}}{m_i} F^*(\vec{a_i},  m_i)
\]
and would have a well-defined rational point in the dilated dual box as required.  

This might not be true for $e_j$ for the reasons mentioned in Remark \ref{rem::Phiexplanation} but we can be sure that after the same procedure, at least one of the $\Phi_j$ as chosen as earlier has this property.  Indeed,  if we let $\phi$ be the last column of each $\Phi$,  we know that we can choose it among a $c_0/2$ dense set of directions.  By the way the problem transforms and because the matrix $\mathrm{Id}_1 \otimes \Phi \in SL_{n+1}(\Z)$ keeps the integer lattice invariant,  we have that the new vectors $(\vec{a}_i^\prime,  m_i^\prime)$ still correspond to the successive minima of the transformed convex distance function $F^*$,  and we have that $(\vec{a}_i^\prime,  m_i^\prime)=(\mathrm{Id}_1 \otimes \Phi)^T (\vec{a}_i,  m_i)$.  Writing $\underline{a_i}$ for $((a_i)_2, \dots, (a_i)_n)$,  we then have
\[
m_i^\prime=\langle \phi,  (\underline{a_i}, m_i) \rangle.
\]
As the factor in front of the first coordinate of $F^*$ is $Q \geq 1$,  the vector $(\underline{a_i}, m_i)$ must be non-zero for $F^*(\vec{a_i},  m_i)<1$ to hold.  But now,  we are pretty much done by the choice of the $\Phi$:  The vectors $(\underline{a_i}, m_i)$ lie in at most $n$ many distinct directions and the set of $\{\phi \in \mathbb{S}^{n-1}: \langle \varphi,  (\underline{a_i}, m_i) \rangle \leq c_0\}$ makes up a set of width at most $c_0$ on the great circle perpendicular to them,  so for $c_0$ small enough,  by volume considerations there is some unit vector $\varphi$ in $\R^n$ with $|\langle \varphi,  (\underline{a_i}, m_i) \rangle| \geq c_0 \Vert (\underline{a_i}, m_i) \Vert$ for all $i$.  Because our collection of $\phi$ is $c_0/2$-dense in the space of directions,  there is then also one of the vectors in our collection with $|\langle \phi,  \vec{a_i} \rangle| \geq c_0/2  \Vert a_i \Vert$ for all $i$.  Choosing the $\Phi$ associated to this $\phi$,  the new points $\vec{a}_i^\prime/m_i^\prime$ then indeed lie in a bounded set around the origin.  Moreover,  for every integer vector $x$ with $F(x) \leq 1$,  the integer $\langle x,  (\vec{a}_i, m_i) \rangle$ satisfies $|\langle x,  (\vec{a}_i, m_i) \rangle| \leq F(x) F^*(\vec{a}_i, m_i)<1$ and hence vanishes,  so all rational points in $V$ lie in $H_1 \cap \dots \cap H_r$.  

For the final part, we know by our assumption that then $F^*(\vec{m}) \geq C^\prime$ for some large $C^\prime$ and all non-zero integer vectors $\vec{m}$. Hence, by Mahler's theorem, we know that $F(\vec{m}) \leq c$ for some small constant $c$. For $c$ small enough, the lattice vectors tile $\{F \leq 1\}$ and thus have at least the expected count, similarly to the argument in Theorem \ref{thm::polarcount}. 
\end{proof}

\section{Duality principle (II): The neighbourhood of manifolds} \label{sec::dualcurves}
We let $\gamma: [0,1] \to \R^n$ be a $n$-times differentiable non-degenerate curve.  We let $V_1 \subset V_2 \subset \dots \subset V_n=\R^n$ be its osculating flag, that is, $V_j=\Span(\gamma^{(i)}(t),  1 \leq i \leq j)$.  We let $v_1,  \dots, v_n$ be the orthonormal basis adapted to the osculating flag.  For $\eps_1 \geq \dots \geq \eps_n$,  we define the $\eps_1 \times \dots \times \eps_n$\textit{-box adapted to the curve at time} $t$ as
\[
B_{\mathbf \eps}(t)=\{x \in \R^n \Big| \forall 1 \leq i \leq n : \; |\langle x-\gamma(t),  v_i \rangle| \leq \eps_i \},
\]
where we use the subscript $\mathbf \eps$ to denote the dependence on the vector $\mathbf \eps:=(\eps_1,  \dots,  \eps_n)$. 
We again set $\eps_0:=1$ and define  
\[
\Delta:=\min \left\{ (\eps^{-1}_j \eps_k)^{\frac{1}{k-j}},  0 \leq j<k \leq n \right\}
\]
to be the \textit{step size} associated to the configuration $\mathbf \eps$.  We define the $\eps_1 \times \dots \times \eps_n$\textit{-adapted neighbourhood of the curve} $\gamma$ to be 
\[
V(\mathbf{\eps})=\bigcup_{t \in [0,1]} B_{\mathbf \eps}(t).
\]
The lemma below shows that the boxes around the set $t_j$ are close to disjoint and essentially cover $V(\mathbf \eps)$.
\begin{lemma} \label{lem::minimalcover}
There are constants $c,  C$ depending only on the dimension and the curvature bounds for $\gamma$,  such that the following holds: Let $t_1, \dots, t_N$ be $N=\left \lfloor \Delta^{-1} \right\rfloor$ many $\Delta$-separated points in $[0,1]$.  Then,
\begin{enumerate}
\item $V(\mathbf \eps) \subset \bigcup_{j=1}^N B_{C\mathbf \eps}(t_j)$;
\item If the minimum in $\Delta$ is attained for one of the terms with $j=0$,  then
\[
B_{c\mathbf \eps}(s) \cap B_{c\mathbf \eps}(t)=\emptyset
\]
for all $s, t$ with $|s-t| \geq \Delta$.
\item If one of the terms $(\eps_j^{-1} \eps_k)^{1/(k-j)}$ with $1 \leq j<k$ attains the minimum in $\Delta$,  the neighbourhoods $B_{\mathbf \eps}(t)$ are still separated in the sense that  
\[
\vol(B_{\mathbf \eps}(s) \cap B_{\mathbf \eps}(t)) \lsim \left(\frac{\Delta}{|s-t|} \right)^{k-j} \eps_1 \dots \eps_n.
\]
\end{enumerate}
Here,  for a constant $a>0$,  $B_{a\mathbf \eps}(t)$ denotes the box associated to the configuration $a \eps_1,  \dots,  a \eps_n$.  
\end{lemma}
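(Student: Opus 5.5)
Everything rests on a Taylor expansion of $\gamma$ around a point $s$, written in the Frenet frame $v_1(s),\dots,v_n(s)$. Since $V_i(s)=\Span(v_1,\dots,v_i)$ is spanned by $\dot\gamma(s),\dots,\gamma^{(i)}(s)$, non-degeneracy gives
\[
|\langle \gamma(s+h)-\gamma(s), v_k(s)\rangle| \sim |h|^k
\]
for $|h|$ small. The upper bound $\lsim |h|^k$ is the statement that the first $k-1$ derivatives are orthogonal to $v_k$. The lower bound, valid for $h$ small, comes from $\langle\gamma^{(k)}(s),v_k(s)\rangle\gsim 1$, which follows from $|\det(\dot\gamma,\dots,\gamma^{(n)})|\geq c_0$. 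Similarly, the frame rotates in a triangular way. By Lemma \ref{lem::vjrelation} and its Frenet analogue, $\dot v_i$ lies in $\Span(v_{i-1},v_i,v_{i+1})$, so
\[
|\langle v_j(s+h),v_k(s)\rangle|\lsim |h|^{|k-j|}.
\]
I would derive both facts first and then treat the three parts in turn.

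For (1), take $x\in B_{\mathbf\eps}(t)$ and choose $t_j$ with $|t-t_j|\leq \Delta$. I need $|\langle x-\gamma(t_j),v_k(t_j)\rangle|\lsim \eps_k$ for every $k$. Split $x-\gamma(t_j)$ as $(\gamma(t)-\gamma(t_j))+\sum_i \beta_i v_i(t)$ with $|\beta_i|\leq\eps_i$. The first term contributes $\lsim\Delta^k$, and $\Delta^k\leq \eps_0^{-1}\eps_k=\eps_k$ by the $j=0$ term in the definition of $\Delta$. The second term contributes $\sum_i \eps_i \Delta^{|k-i|}$. For $i<k$, the definition of $\Delta$ gives $\Delta^{k-i}\leq \eps_k/\eps_i$, so these summands are $\leq\eps_k$. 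For $i>k$, $\eps_i\leq\eps_k$ and $\Delta\leq 1$ already suffice. Together these give inclusion in $B_{C\mathbf\eps}(t_j)$.

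For (2), suppose the minimum in $\Delta$ is $\eps_k^{1/k}$ for some $k$, and let $|s-t|=h\geq\Delta$. When $h$ is small, the displacement $\gamma(t)-\gamma(s)$ has a component in the direction $v_m(s)$ of size $\sim h^m$ for each $m$. I would pick the index $m$ maximising $h^m/\eps_m$; at $m=k$ this ratio is $\geq (h/\Delta)^k\geq 1$. Then I compare the component $h^m$ against the widths $c\eps_m$ of the two boxes in that direction. The box around $t$ is tilted, so its projection onto $v_m(s)$ has width $\lsim \sum_i c\,\eps_i h^{|m-i|}$. I expect to control this by $c\,h^m$ using maximality of $m$ together with the minimality defining $\Delta$. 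Then for $c$ small the projections onto $v_m(s)$ are disjoint. For $h$ large (of order $1$), the boxes are disjoint simply because $\gamma$ is injective with separation $\gsim h$ and the boxes are small ($\eps_i\leq c_0$).

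For (3), let the minimum be attained at $(j,k)$ with $j\geq1$. I would bound the overlap volume by showing that the slab $\{|\langle x-\gamma(t), v_j(t)\rangle|\leq\eps_j\}$, intersected with $B_{\mathbf\eps}(s)$, is thin in the $v_k(s)$ direction. The tilt between $v_j(t)$ and $v_k(s)$ is of order $h^{k-j}$. So within $B_{\mathbf\eps}(s)$ the constraint cuts the $v_j(s)$ extent $\eps_j$ down by a factor $(\Delta/h)^{k-j}=(\eps_k/\eps_j)/h^{k-j}$, via the same sectional computation as in Lemma \ref{lem::boxdual}. The main obstacle is this tilted-box intersection bookkeeping, in both (2) and (3). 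In particular, I must show that the translation terms do not spoil the volume factor, and that the intermediate indices between $j$ and $k$ interact correctly with the minimality of $\Delta$. I would first try to reduce to the two-dimensional plane $\Span(v_j(s),v_k(s))$ and treat the other coordinates as fixed widths.
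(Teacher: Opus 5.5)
Your part (1) is the paper's argument.

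\textbf{Part (2).} Here there is a genuine gap. The separating direction you choose, the index $m$ maximising $h^m/\eps_m$, does not work in general. To bound the tilted width $c\sum_i\eps_i h^{|m-i|}$ by $ch^m$ you need $\eps_i\lsim h^i$ for every $i<m$. Neither maximality of $m$ nor minimality of $\Delta$ forces this once $n\ge 4$.

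Take $n=4$ and $\mathbf\eps=(\Delta,A\Delta^2,A\Delta^3,A\Delta^4)$ with $A$ large and $\Delta\le A^{-1}$. The step size is $\Delta$, attained at $(j,k)=(0,1)$. For $h=|s-t|=A^{2/5}\Delta$ the ratios $h^i/\eps_i$ are $A^{2/5},A^{-1/5},A^{1/5},A^{3/5}$, so $m=4$. But $\eps_2h^2=A^{1/5}h^4$, and $|\langle v_2(t),v_4(s)\rangle|\sim h^2$ by Lemma \ref{lem::frenetchange}. So the projection of $B_{c\mathbf\eps}(t)$ onto $v_4(s)$ has half-width $\gsim cA^{1/5}h^4$, which swamps the displacement $\lsim h^4$. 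No $c$ independent of $\mathbf\eps$ separates in that direction, although the boxes are disjoint (they separate along $v_1$). For $n\le 3$ your choice happens to work.

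The paper instead uses the direction $e_k(t)$ for the index $k$ with $\Delta=\eps_k^{1/k}$. What makes this work is a fact your sketch does not extract. Comparing the terms $(0,i)$ and $(i,k)$ in the minimum gives $\Delta^i\le\eps_i\le\eps_k\Delta^{-(k-i)}=\Delta^i$. Hence $\eps_i=\Delta^i\le h^i$ for all $i<k$. For $i>k$ one has $\eps_ih^{i-k}\le\eps_k=\Delta^k\le h^k$. So every tilt term is $O(ch^k)$, against a displacement $\sim h^k$. The paper writes $O(c\eps_k)$ here, which is only accurate for $h\lsim\Delta$; the comparison with $h^k$ is what actually closes the argument.

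\textbf{Part (3).} You pair the slab with the wrong coordinate. The slab $|\langle x-\gamma(t),v_j(t)\rangle|\le\eps_j$ restricts the $v_k(s)$-coordinate of points of $B_{\mathbf\eps}(s)$ only to length $\lsim\eps_j h^{-(k-j)}$. This is at least $\eps_k$, because $\eps_j/\eps_k=\Delta^{-(k-j)}\ge 1\ge h^{k-j}$, so that slab gains nothing.

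The factor $(\Delta/h)^{k-j}$ you quote comes from the other slab, as in the paper. Write $y=\gamma(s)+\sum_i s_i e_i(s)$. The condition $|\langle y-\gamma(t),e_k(t)\rangle|\le\eps_k$ reads $|c s_j+d|\le\eps_k$, with $c=\langle e_j(s),e_k(t)\rangle$, $|c|\gsim h^{k-j}$, and $d$ independent of $s_j$. So for fixed $s_i$, $i\ne j$, the coordinate $s_j$ lies in an interval of length $\lsim \eps_k h^{-(k-j)}=(\Delta/h)^{k-j}\eps_j$, and Fubini gives the volume bound. The translation and all other indices enter only through $d$. The bookkeeping you identify as the main obstacle, and the reduction to the plane $\Span(v_j(s),v_k(s))$, are therefore unnecessary.
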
 
\begin{proof}
Let $e_1(t),  \dots, e_n(t)$ be the Frenet frame for the curve.  We notice that for the first assertion,  it suffices to show that there is a $C$ such that for any $t, s$ with $|t-s| \leq \Delta$,  we have that
\[
B_{\mathbf \eps}(s) \subset B_{C \mathbf \eps} (t).
\]

To do so, we note that by Lemma \ref{lem::frenetchange},  
\[
|\langle \gamma(s)-\gamma(t), e_j(t) \rangle| \sim |s-t|^j
\]
while 
\[
|\langle e_i(s), e_j(t) \rangle| \lsim |t-s|^{|i-j|}
\]
for $i \neq j$.  Thus, any point $y \in B_{\mathbf \eps}(s)$ satisfies
\begin{align*}
|\langle y-\gamma(t),  e_j(t) \rangle| \leq |\langle \gamma(s)-\gamma(t),  e_j(t) \rangle| + \sum_{j=1}^n \eps_i |\langle e_i(s),  e_j(t) \rangle| \lsim |s-t|^j+\sum_{i =1}^n \eps_i |s-t|^{|j-i|}.
\end{align*}
Now,  by definition of $\Delta$ and because $|s-t| \leq \Delta$,
\[
|s-t|^j \leq \Delta^j \leq \eps_j,
\]
while because $\eps_i \geq \eps_l$ for $i \leq l$ and again by definition of $\Delta$,
\[
\sum_{i =1}^n \eps_i |s-t|^{|j-i|} \lsim \sum_{i=1}^j \eps_i  |s-t|^{j-i} \leq \sum_{i=1}^j \eps_i  \Delta^{i-j} \leq \sum_{i=1}^j \eps_i  (\eps_i^{-1} \eps_j) \lsim \eps_j.
\]
Hence,  
\[
|\langle y-\gamma(t),  e_j(t) \rangle| \lsim \eps_j
\]
for all $j$,  which shows that indeed $B_{\mathbf \eps}(s) \subset B_{C \mathbf \eps}(t)$ for a sufficiently big constant $C$. 

For the second assertion,  we note that by Lemma \ref{lem::frenetchange}, because the curve is non-degenerate,  
\[
|\langle \gamma(s)-\gamma(t),  e_k(t) \rangle| \sim |s-t|^k \quad \mathrm{and} \quad |\langle e_j(s),  e_k(t) \rangle| \sim |s-t|^{|k-j|} \quad \mathrm{for} \; j \neq k.
\]
Then,  for a point $y=\gamma(s)+\sum_{j=1}^n s_j e_j(s)$,
\[
\langle y-\gamma(t),  e_k(t) \rangle=\langle \gamma(s)-\gamma(t),  e_k(t) \rangle+ \sum_{j=1}^n s_j \langle e_j(s), e_k(t) \rangle.
\]
For a point to be contained in $B_{c\mathbf \eps}(s) \cap B_{c\mathbf \eps}(t)$,  the $s_j$ must be such that $|s_j| \leq c \eps_j$ and also 
\begin{equation} \label{eq::sjcond}
\left| \langle \gamma(s)-\gamma(t),  e_k(t) \rangle+ \sum_{j=1}^n s_j \langle e_j(s), e_k(t) \rangle \right| \leq c \eps_k.
\end{equation}

If the minimum in $\Delta$ is attained for one of the $\eps_k^{1/k}$ (so with $j=0$),  then 
\[
\left| \langle \gamma(s)-\gamma(t),  e_k(t) \rangle \right| \gsim \eps_k.
\]
By the calculation from the first part,  the rest of the terms are $O(c \eps_k)$ for $|s_j| \leq c \eps_j$.  Thus,  for $c$ sufficiently small compared to the implicit constants,  \eqref{eq::sjcond} will fail.  This shows the second assertion.  

For the final assertion,  we let $j<k$ be such that $\Delta=(\eps_j^{-1} \eps_k)^{1/(k-j)}$.  If $|s-t| \leq \Delta$,  there is nothing to show.  Any point in $B_{\mathbf \eps}(s) \cap B_{\mathbf \eps}(t)$ then must be such that $|s_j| \leq \eps_j$,  while simultaneously by \eqref{eq::sjcond},
\[
|s_j c+d|:=\left| s_j \langle e_j(s), e_k(t) \rangle+\langle \gamma(s)-\gamma(t),  e_k(t) \rangle+ \sum_{i \neq j} s_i \langle e_i(s), e_k(t) \rangle \right| \leq \eps_k,
\]
where $c$ and $d$ depend on $s, t$ and $s_i$ for $i \neq j$ but are independent of $s_j$.  We note that by Lemma \ref{lem::frenetchange},
\[
|c| \gsim |s-t|^{k-j} \gsim \left(\frac{|s-t|}{\Delta} \right)^{k-j} \Delta^{k-j}=\left(\frac{|s-t|}{\Delta} \right)^{k-j} \eps_j^{-1} \eps_k,
\]
which shows that for any choice of value of $s_i,  i \neq j$,  the second inequality is satisfied on an interval of length at most
\[
|c|^{-1} \eps_k \lsim \left(\frac{\Delta}{|s-t|} \right)^{k-j} \eps_j.
\]
Hence,  denoting the vector of the coordinates $s_i,  i \neq j$ by $\underline{s}$ and with $c, d$ and $y$ as above,
\begin{align*}
\vol(B_{\mathbf \eps}(s) \cap B_{\mathbf \eps}(t)) &=\int_{-\eps_1}^{\eps_1} \dots \int_{-\eps_n}^{\eps_n} d\underline{s} \; \int_{-\eps_j}^{\eps_j} ds_j \; \mathbf{1}_{B_{\mathbf \eps}(t)}(y) \\
&\leq \int_{-\eps_1}^{\eps_1} \dots \int_{-\eps_n}^{\eps_n} d\underline{s} \; \int_{-\eps_j}^{\eps_j} ds_j \; \mathbf{1}_{|s_j c+d|\leq \eps_k}  \\
&\leq \int_{-\eps_1}^{\eps_1} \dots \int_{-\eps_n}^{\eps_n} \; |c|^{-1} \eps_k \; d\underline{s} \\
&\lsim \left(\frac{\Delta}{|s-t|} \right)^{k-j} \eps_1 \dots \eps_n.
\end{align*}
\end{proof}
For the configuration $\eps_1, \dots, \eps_n$,  we define the dual configuration $(\widetilde{\eps_i})_{0 \leq i \leq n}$ by
\[
\widetilde{\eps_i}:=\eps_{n}/\eps_{n-i}
\]
and the dual step size by
\[
\widetilde{\Delta}:=\min \left\{ (\widetilde{\eps_j}^{-1} \widetilde{\eps_k})^{\frac{1}{k-j}},  0 \leq j<k \leq n \right\}.
\]
\begin{remark}
From the definition,  it is clear that $\Delta=\widetilde{\Delta}$.  
\end{remark}
Using the material in this chapter,  we get the following result. 
\begin{theorem} \label{thm::nearcurvedual}
Let $\gamma$ be a non-degenerate curve in $\R^n$,  let $\gamma^\ast$ be its dual curve and let $\eps_1 \geq \dots \geq \eps_n$.  Let $\Delta$ be the associated step size and $V(\eps_i)$ be the adapted neighbourhood.  
Let $\widetilde{V}$ be the $\widetilde{\eps_1} \times \dots \times \widetilde{\eps_i}$-adapted neighbourhood to the dual curve $\gamma^\ast$.  

Then,  the intersection size of hyperplanes $H$ with $V$ is governed by whether the associated point $x_H$ lies in the dual neighbourhood $\widetilde{V}$.  To be precise,  for any $H$ with $x_H \in \widetilde{V}$ we have that
\[
\vol(H \cap V) \gsim \eps_1 \dots \eps_{n-1},
\]
while if 
\[
\vol(H \cap V) \gsim \frac{1}{A} \eps_1 \dots \eps_{n-1},
\]
for some parameter $A$,  then $x_H \in \widetilde{V}_A:=\widetilde{V}((A \widetilde{\eps_i}))$.  Understanding hyperplanes of large intersection with $V$ of a certain form thus reduces to counting the associated points near the dual curve $\gamma^\ast$,  where `near' is understood in terms of the adapted neighbourhoods for the dual configuration $(\widetilde{\eps_j})_j$.  
\end{theorem}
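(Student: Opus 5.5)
The plan is to reduce Theorem \ref{thm::nearcurvedual} to the single-box statement Lemma \ref{lem::boxdual}, using the minimal cover of Lemma \ref{lem::minimalcover} on the physical side and the duality of frames from Lemma \ref{lem::dualmatrix} on the dual side. The first step is to check that the box $B_{\mathbf \eps}(t)$ adapted to $\gamma$ at time $t$ has dual box $\widetilde{B_{\mathbf \eps}(t)}$ comparable to the $\widetilde{\eps_1}\times\dots\times\widetilde{\eps_n}$-box adapted to $\gamma^\ast$ at time $t$. The basis $v_1,\dots,v_n$ of Section \ref{subsec::boxtransform} is adapted to the osculating flag and non-degenerate. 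By the remark after Lemma \ref{lem::boxdual}, the orthonormal Frenet box and the parallelepiped spanned by the $v_i$ are therefore comparable. The vectors $w_i$ used to define the dual box are, up to bounded factors, the columns of $(A^{-1})^T=\widetilde{A}$. By Lemma \ref{lem::dualmatrix} and Lemma \ref{lem::wj_n}, these are the vectors adapted to the osculating flag of $\widetilde\gamma$, which is $\gamma^\ast$ up to a permutation of coordinates. The centre $x_B$ is the dual point of the osculating hyperplane, which is exactly $\gamma^\ast(t)$. Finally, the widths $\eps_n/\eps_i$ match $\widetilde{\eps_{n-i}}$ with the order reversed, consistent with the reversal $i\mapsto n-i$ in Lemma \ref{lem::dualmatrix}. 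So $\widetilde{B_{\mathbf\eps}(t)}\sim B_{\widetilde{\mathbf\eps}}^{\gamma^\ast}(t)$, with constants depending only on $n$ and the curvature bounds.

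For the lower bound, I take $x_H\in\widetilde V$, so $x_H\in B^{\gamma^\ast}_{\widetilde{\mathbf\eps}}(t)$ for some $t$. After shrinking by a constant (or using the first part of Lemma \ref{lem::minimalcover} on the dual side together with $\Delta=\widetilde\Delta$), this places $x_H$ in $\widetilde{B}_c$ for the box $B=B_{\mathbf\eps}(t')$ with $|t-t'|\lesssim\Delta$. The first half of Lemma \ref{lem::boxdual} then gives $\vol(H\cap V)\geq\vol(H\cap B)\gsim\eps_1\cdots\eps_{n-1}$. The choice of the direction $u_j$ is harmless here, because the coordinates were arranged so that $(v_n)_n\geq c_0$ along the whole curve segment.

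For the converse, I cover $V$ by the $N=\lfloor\Delta^{-1}\rfloor$ boxes $B_{C\mathbf\eps}(t_j)$ from Lemma \ref{lem::minimalcover}(1). Then $\vol(H\cap V)\le\sum_j\vol(H\cap B_{C\mathbf\eps}(t_j))$. If the hypothesis forces some single box $B_j$ to satisfy $\vol(H\cap B_j)\gsim A^{-1}\eps_1\cdots\eps_{n-1}$, the second half of Lemma \ref{lem::boxdual} gives $x_H\in(\widetilde{B_j})_{CA}$. By the first step, this set lies inside $\widetilde V_{CA}$, and the constant is absorbed into the $\gsim$.

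The main obstacle is that the sum over boxes might be large while every individual term is small, since a hyperplane could meet many boxes. I would handle this with the more precise form proved inside Lemma \ref{lem::boxdual}: the slab bound $\vol(H_r\cap B)\lesssim A_B^{-1}\eps_1\cdots\eps_{n-1}$, where $A_B$ is the dual distance of $x_H$ from $\gamma^\ast(t_B)$ measured in units of $\widetilde{\mathbf\eps}$. This dual distance is essentially the Taylor expansion of $\langle\gamma(s),\vec n_H\rangle$. As $t_B$ moves away from the nearest point by $k\Delta$, non-degeneracy makes $A_B$ grow at least like $k$ (polynomially, as in the separation estimate of Lemma \ref{lem::minimalcover}(3)). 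The sum is therefore geometric, and it is dominated, up to at most $O(1)$ boxes, by the box that minimises $A_B$, which reduces the converse to the single-box case above. I expect that making this growth estimate precise is the real work; the rest is bookkeeping with Lemma \ref{lem::minimalcover} and Lemma \ref{lem::boxdual}.
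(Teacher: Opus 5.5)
Your route is the paper's: cover $V$ by the boxes of Lemma \ref{lem::minimalcover}, apply Lemma \ref{lem::boxdual} box by box, and identify the dual of $B_{\mathbf \eps}(t)$ with the $\widetilde{\mathbf \eps}$-box adapted to $\gamma^\ast$ at $t$ via the duality of osculating flags. The identification and the lower bound are fine.

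The gap is the step you flag yourself in the converse, which the paper's proof also only asserts (``any hyperplane of significant intersection has to have large intersection with one of the boxes''). Your fix does not close it. Growth of $A_B$ like $k$ gives $\sum_k (A_{\min}+k)^{-1}$, which is harmonic, not geometric, and is not dominated by one term when $A_{\min}$ is large. In fact the reduction, and with it the converse, is false for general $\eps_1 \geq \dots \geq \eps_n$.

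Take $n=2$, $\gamma(x)=(x,x^2/2)$, $\eps_1=\eps^{1/3}$, $\eps_2=\eps$ (so $\Delta=\eps^{2/3}$), and $H=\{y=h\}$ with $h=\eps^{2/3}/4$.
\begin{itemize}
\item Every point $(x,h)$ with $\eps^{1/3}/\sqrt{2} \leq x \leq \eps^{1/3}$ lies on the tangent line at $u=x-\sqrt{x^2-2h}$, within distance $<\eps^{1/3}$ of $\gamma(u)$. Hence it lies in $B_{\mathbf \eps}(u)$, and $\vol(H \cap V) \gsim \eps_1$.
\item Every box met by $H$ has $|u| \gsim \eps^{1/3}$ and is crossed at angle $\sim |u|$, so $A_B=|u|\eps^{-2/3} \gsim \eps^{-1/3}$. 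There are $\sim \eps^{-1/3}$ such boxes, and along them $A_B$ grows by exactly one per step, just as you predict.
\item On the dual side $x_H=(0,-h)$ and $\gamma^\ast(t)=(t,t^2/2)$. So $x_H \in \widetilde{V}_A$ would need some $t$ with $|t| \lsim A\eps^{2/3}$ and $|t^2/2-h| \lsim A\eps$, which forces $A \gsim \eps^{-1/3}$.
\end{itemize}

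This is not an artefact of overlapping boxes. In $\R^3$ take $\eps_1=\eps^{1/2}$, $\eps_2=\eps^{3/4}$, $\eps_3=\eps$. Then $\Delta=\eps^{1/2}$ is attained at $j=0$, so the cover is essentially disjoint by Lemma \ref{lem::minimalcover}(2).
\begin{itemize}
\item The rectifying plane $H$ at $t_0$ (normal $e_2(t_0)$) passes within $\lsim \eps^{3/4}$ of $\gamma(t)$ for $|t-t_0| \lsim \eps^{3/8}$.
\item Every box has width $\gsim \eps_2$ in direction $e_2(t_0)$, and width $\sim\eps_2$ along this run. So $H$ cuts $\sim \eps^{-1/8}$ disjoint boxes, each in area $\sim \eps^{1/4}\eps_1\eps_2$, with $A_B \sim \eps^{-1/4}$ constant along the run. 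This gives $\vol(H \cap V) \gsim \eps^{1/8}\eps_1\eps_2$.
\item Yet $x_H$ is at distance $\sim 1$ from $\gamma^\ast$, since $e_2(t_0)$ is far from every osculating normal $e_3(t)$. Meanwhile $\widetilde{V}_A$ lies within $\lsim A\widetilde{\eps_1}=A\eps^{1/4}$ of $\gamma^\ast$.
\end{itemize}

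So the converse must be restricted to special configurations. For the standard $\sqrt{\eps}\times\eps\times\dots\times\eps$ boxes your strategy does work, with dyadic rather than linear bookkeeping:
\begin{itemize}
\item The width of $B_{\mathbf \eps}(t)$ in direction $\vec{n}_H$ is $\sim \eps+\sqrt{\eps}\,|\phi'(t)|$, where $\phi(t)=\langle \gamma(t)-y_H, \vec{n}_H\rangle$.
\item Non-degeneracy bounds the number of solutions of $\phi=a$ and of $\phi'=0$ on short intervals by $n$.
\item Hence only $O(1)$ boxes with $A_B$ in a given dyadic range meet $H$, and the sum over dyadic ranges is genuinely geometric.
\end{itemize}
Theorem \ref{thm::mainduality} and the later applications use only the identification of dual boxes, not this converse.
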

\begin{proof}
We let $N:=\lfloor \Delta^{-1} \rfloor$ and $t_j:=\frac{j}{N}$.  By Lemma \ref{lem::minimalcover},  any hyperplane of significant intersection has to have large intersection with one of the boxes $B_{\mathbf \eps}(t_j)$.  By Lemma \ref{lem::boxdual},  this exactly corresponds to having a point in the dual box $\widetilde{B_{\mathbf \eps}(t_j)}$ with the dual,  with the same behaviour as in the conclusion of the theorem.  By definition of the dual box,  it is adapted to the flag dual to $V_0=\{0\} \subset V_1 \subset \dots \subset V_n=\R^n$ with parameters $\widetilde{\eps_i}=\frac{\eps_n}{\eps_{n-i}}$ and is centered at the point corresponding $x_{H_B}$ to $\gamma(t_j)+V_{n-1}$.  By Lemma \ref{lem::projdualcurve},  the flag dual to the osculating flag of $\gamma$ is the osculating flag of the dual curve $\gamma^\ast$ and the $\gamma(t_j)+V_{n-1}$ affine subspace exactly corresponds to $\gamma^\ast(t_j)$.  Hence,  the dual box indeed corresponds to the $\widetilde{\eps_1} \times \dots \times \widetilde{\eps_n}$ adapted neighbourhood of the dual curve $\gamma^\ast$.  
\end{proof}

We can now restate and prove Theorem \ref{thm::mainduality} from the introduction.
\begin{restated}{Theorem \ref{thm::mainduality}}
There is a finite collection of matrices $\Phi_k \in SL_n(\Z)$ with orthogonal columns such that the following holds. Let $\gamma$ be a non-degenerate curve in $\R^n$ with dual curve $\gamma^\ast$,  let $c_0 \geq \eps_1 \geq \dots \geq \eps_n>0$ and let $Q \in \N$ be such that
\[
\eps_1 \cdot \dots \cdot \eps_n \geq C Q^{-(n+1)},
\]
where $c_0$ and $C$ are the constants from Theorem \ref{thm::maindualitybox}.  Let $\Delta$ be the associated step size,  $N:=\lfloor \Delta^{-1} \rfloor$,  and let $t_1,  \dots,  t_N$ be as in Lemma \ref{lem::minimalcover}.  Let $V_j:=B_{\mathbf \eps}(t_j)$,  $1 \leq j \leq N$,  be the boxes of the minimal cover of $V(\mathbf \eps)$,  and let $\widetilde{V}_j$ be the $\widetilde{\eps_1} \times \dots \times \widetilde{\eps_n}$-box adapted to the dual curve $\gamma^\ast$ at time $t_j$,  where $\widetilde{\eps_i}=\eps_n/\eps_{n-i}$.  For $B \geq 1$,  let $(\widetilde{V}_j)_B$ denote the corresponding box with parameters $B\widetilde{\eps_1},  \dots,  B\widetilde{\eps_n}$.  Set $\widetilde{Q}:=\frac{1}{\eps_n Q}$.  

For any parameter $A \geq 1$,  if
\[
\mathcal{N}(Q,  V_j) \geq C A \eps_1 \dots \eps_n Q^{n+1}
\]
for some $j$, then there is a $\Phi_k$ in the collection above such that for all $1 \leq i \leq r<n$ there are $B_i \geq 1$ and $m_i \leq \widetilde{Q}/B_i$ such that
\[
\prod_{i=1}^r \frac{\widetilde{Q}}{B_i m_i} \gsim A,
\]
and there are rational points $\vec{a_i}/m_i \in (\widetilde{\Phi_k V_j})_{B_i} \cap K(C)$ with $\vec{a_i} \in \Z^n$.  The rational points $\vec{a_i}/m_i$ are in general position,  meaning that no $s+2$ of them are contained in an $s$-dimensional affine subspace.  The rational points in $V_j$ lie in the $(n-r)$-dimensional rational subspace $H_1 \cap \dots \cap H_r$,  where $H_i$ is the hyperplane corresponding to $\vec{a_i}/m_i$ under projective duality (in the coordinates given by $\Phi_k$).  Here, the $\widetilde{\Phi_k V_j}$ are the boxes of the adapted cover of the dual curve $(\Phi_k \gamma)^\ast$.
\end{restated}
\begin{proof}[Proof of Theorem \ref{thm::mainduality}]
Follows immediately from Theorem \ref{thm::maindualitybox} applied to the boxes $V_j$ of the minimal cover,  since by the proof of Theorem \ref{thm::nearcurvedual} the dual box of $V_j$ is $\widetilde{V}_j$.  
\end{proof}

For general manifolds, we can use a similar principle to induce duality: If $\mathcal{M}=\{(x, f(x)): x \in U\}$, then partition $U$ into cubes of side length $\sqrt{\eps}$, linearise, and apply Theorem \ref{thm::maindualitybox} to each box individually. If $f$ has curvature along a row (which can be arranged under assumptions as weak as $f$ having no planar point), then under dualisation we still get a sparse problem along the row, to which we can try to apply the determinant method.
\section{Geometric Lemmata} \label{sec::geomlemmata}
We use the osculating flag and the Frenet frame of a non-degenerate curve as introduced in Section \ref{subsec::basics}.  The following lemma is a standard result about the ways in which the Frenet frame changes.
\begin{lemma} \label{lem::frenetchange}
Let $\gamma$ be a non-degenerate curve in $\R^n$ with osculating flag $V_1 \subset \dots \subset V_n$ and Frenet-Serret frame $e_1(t), \dots, e_n(t)$.  Then,  for any $s, t \in [0,1]$, if we write
\[
\gamma(t)-\gamma(s)=\sum_{i=1}^n e_i(t) c_i
\]
and
\[
e_j(t)-e_j(s)=\sum_{i=1}^n e_i(t) c_{ij},
\]
we have that
\[
|c_j| \sim |s-t|^j \quad, \quad |c_{jj}| \sim |s-t|^2 \quad  \mathrm{and} \quad |c_{ij}| \sim |s-t|^{|i-j|} \; \mathrm{for} \; i \neq j,
\]
where the implicit constants depend only on the curvature bounds for $\gamma$. 
\end{lemma}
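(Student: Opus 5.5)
The plan is to Taylor expand both $\gamma$ and the frame vectors around $t$, and to express everything in the Frenet frame at $t$. The Frenet frame is obtained by Gram--Schmidt from $\dot\gamma,\dots,\gamma^{(n)}$, so it satisfies the generalised Frenet--Serret equations
\[
\dot e_j=-\kappa_{j-1}e_{j-1}+\kappa_j e_{j+1},
\]
with curvatures $\kappa_1,\dots,\kappa_{n-1}$. Non-degeneracy and the compactness of $[0,1]$ give $\kappa_j\sim 1$ uniformly. I will assume enough regularity that $\gamma$ is $C^{n+1}$, so that the frame is $C^{n}$. Set $h:=s-t$, which is small after restricting to short intervals; for $|s-t|\gsim 1$ the bounds are trivial upper bounds, and the lower bounds must be read locally.

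For the coefficients $c_j$, write $\gamma(s)-\gamma(t)=\sum_{k\ge1}\gamma^{(k)}(t)h^k/k!+O(h^{n+1})$. Since $\gamma^{(k)}(t)\in V_k=\Span(e_1,\dots,e_k)$, we get $\langle\gamma^{(k)}(t),e_j(t)\rangle=0$ for $k<j$. The leading coefficient is
\[
\langle\gamma^{(j)}(t),e_j(t)\rangle=|\dot\gamma|\kappa_1\cdots\kappa_{j-1}\sim 1,
\]
which is exactly the statement that the determinant of the flag is nondegenerate. Hence
\[
c_j=\langle\gamma^{(j)}(t),e_j(t)\rangle\,\frac{h^j}{j!}+O(h^{j+1})\sim|h|^j
\]
for $|h|$ smaller than a constant depending on the curvature bounds.

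For $c_{ij}$ I will iterate the Frenet equations. The $k$-th derivative $e_j^{(k)}(t)$ lies in $\Span(e_{j-k},\dots,e_{j+k})$. Its coefficient on $e_{j\pm k}$ is $\prod\kappa$, with sign $(-1)^k$ in the downward direction, and is therefore $\sim1$. Consequently $\langle e_j(s)-e_j(t),e_i(t)\rangle$ has a Taylor expansion whose first nonzero term has order $h^{|i-j|}$ with coefficient $\sim1$, which gives $|c_{ij}|\sim|h|^{|i-j|}$ for $i\ne j$.

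The diagonal term needs a separate argument, because the first derivative contributes nothing on the diagonal. I will use the identity
\[
\langle e_j(s),e_j(t)\rangle-1=-\tfrac12|e_j(s)-e_j(t)|^2,
\]
which gives $c_{jj}=-\tfrac12\sum_i c_{ij}^2$. The dominant part comes from $c_{j\pm1,j}^2\sim h^2$, so $|c_{jj}|\sim h^2$. The only genuine care is for $j=1$ and $j=n$, where only one neighbour exists, but a single neighbour with $\kappa\sim1$ still suffices.

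I expect the main obstacle to be making the lower bounds uniform: the error terms must not cancel the leading terms. I will handle this by fixing a small $h_0$, depending only on the curvature bounds and the $C^{n+1}$ norms, such that for $|h|\le h_0$ the Taylor remainders are at most half the leading term. For $|h|\ge h_0$ only upper bounds $\lsim 1$ are needed, and this is all that the uses in Lemma \ref{lem::minimalcover} require, together with lower bounds for $|s-t|\le\Delta$ small. The intended reading of $\sim$ is therefore local, and I would state that explicitly.
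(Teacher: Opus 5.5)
Your proposal is correct and is essentially the paper's argument (Frenet--Serret equations plus Taylor expansion at $t$), written out in more detail. The identity $|c_{jj}|=\tfrac12|e_j(s)-e_j(t)|^2$ for the diagonal and your remark that the lower bounds only hold for $|s-t|$ below a constant depending on the curve are both correct, and the paper leaves them implicit.

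One small fix concerns regularity. $\gamma\in C^{n+1}$ does not make the whole frame $C^n$; for instance $e_{n-1}$ and $e_n$ are in general only $C^2$. So for $i<j$ you should not iterate the Frenet equations on $e_j$. Instead obtain $c_{ij}$ by writing $\langle e_j(s),e_i(t)\rangle=\langle e_i(t),e_j(s)\rangle$ and expanding the lower-index vector $e_i$ around $s$, as in the proof of Lemma \ref{lem::Rnvolnghbhd}, or else simply assume more smoothness.
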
 
\begin{proof}
This follows easily from the Frenet--Serret equations (see e.g.\ \cite[Chapter 2]{kuehnel}),  which assert that for each $j$,  $\frac{d}{dt} e_j(t)$ can be smoothly written (with coefficients bounded away from zero for a non-degenerate curve) in terms of $e_{j-1}$ and $e_{j+1}$,  and Taylor approximation.
\end{proof}

\begin{lemma} \label{lem::Rnvolnghbhd}
Let $\gamma \in C^n([0,1]; \R^n)$ be a non-degenerate curve with $|\det(\gamma'(t), \dots, \gamma^{(n)}(t))| \geq c$ and $|\gamma^{(m)}(t)| \leq c^{-1}$ for all $t \in [0,1]$ and $1 \leq m \leq n$, and let $e_1(t), \dots, e_n(t)$ be its Frenet frame. Let $\lambda_1 \geq \lambda_2 \geq \dots \geq \lambda_n > 0$ and for $s \in [0,1]$ set
\[
V_s:=\left\{\gamma(s)+\sum_{i=1}^n \beta_i e_i(s): |\beta_i| \leq \lambda_i \textrm{ for all } 1 \leq i \leq n \right\}.
\]
Let $I \subset [0,1]$ be an interval of length $|I|=t \leq 1$, let $t_0 \in I$ and let $V_I:=\bigcup_{s \in I} V_s$. For a vector $y \in \R^n$,  let $|\langle V_I,  y \rangle|$ denote the diameter of the set $\{\langle x, y \rangle: x \in V_I\}$. Then the following hold, with implicit constants depending only on $n$ and $c$.
\begin{enumerate}
\item For every $1 \leq i \leq n$,
\[
|\langle V_I,  e_i(t_0) \rangle| \lsim w_i:=t^i+\sum_{j=1}^i \lambda_j t^{i-j}.
\]
\item Set $M_0:=1$ and $M_i:=\max(t M_{i-1},  \lambda_i)$ for $1 \leq i \leq n$. Let $i_1<\dots<i_k$ be the indices $i \geq 1$ with $M_i=\lambda_i$, and set $i_{k+1}:=n+1$, $m_j:=i_{j+1}-i_j$ (and $i_1:=n+1$ if $k=0$). Then
\[
\vol(V_I) \lsim \prod_{i=1}^n w_i \sim t^{i_1(i_1-1)/2} \prod_{j=1}^k \lambda_{i_j}^{m_j} t^{m_j(m_j-1)/2}.
\]
In particular,
\[
\vol(V_I) \lsim t^{n(n+1)/2}+\sum_{\substack{1 \leq i_1<\dots<i_k \leq n \\ 1 \leq k \leq n}} t^{i_1(i_1-1)/2} \prod_{j=1}^k \lambda_{i_j}^{m_j} t^{m_j(m_j-1)/2}.
\]
\end{enumerate}
As only upper bounds on the $|\beta_i|$ are used, the same bounds hold for neighbourhoods defined by $\beta_i \sim \lambda_i$ and for unions over finitely many points $t_j \in I$.
\end{lemma}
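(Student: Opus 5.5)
Both parts rest on Lemma \ref{lem::frenetchange}, which we apply in the frame $e_1(t_0),\dots,e_n(t_0)$. Fix $s \in I$. For a point $x=\gamma(s)+\sum_j \beta_j e_j(s)$ of $V_s$, we expand
\[
\langle x-\gamma(t_0), e_i(t_0)\rangle=\langle \gamma(s)-\gamma(t_0), e_i(t_0)\rangle+\sum_{j=1}^n \beta_j \langle e_j(s), e_i(t_0)\rangle .
\]
By the lemma, the first term is $\lsim |s-t_0|^i \leq t^i$. The coefficient $\langle e_j(s),e_i(t_0)\rangle$ is $\lsim t^{|i-j|}$ for $j\neq i$ and $\lsim 1$ for $j=i$. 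For $j>i$ we use $\lambda_j \leq \lambda_i$ and $t\le 1$, so that $\lambda_j t^{j-i}\le \lambda_i$. Hence the sum is $\lsim \sum_{j\le i}\lambda_j t^{i-j}$. Taking the diameter over all $s\in I$ gives part 1 with $w_i=t^i+\sum_{j\le i}\lambda_j t^{i-j}$. Only upper bounds on the $|\beta_j|$ enter, which justifies the final remark of the lemma about neighbourhoods with $\beta_i\sim\lambda_i$ and finite unions.

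For the volume bound in part 2, part 1 shows that $V_I$ lies in a box centred at $\gamma(t_0)$ with axes $e_i(t_0)$ and side lengths $\lsim w_i$. Therefore $\vol(V_I)\lsim\prod_i w_i$.

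It remains to identify $\prod_i w_i$ with the stated monomials. The plan is to show $w_i\sim M_i$ by induction. The recursion is $w_i = t\,w_{i-1}+\lambda_i$ up to constants: indeed
\[
t\,w_{i-1}+\lambda_i = t^i+\sum_{j<i}\lambda_j t^{i-j}+\lambda_i=w_i ,
\]
with $w_0=1$. Since $a+b\sim\max(a,b)$, this gives $w_i\sim\max(t w_{i-1},\lambda_i)\sim M_i$, with constants depending only on $n$.

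We then unwind the recursion. For $i<i_1$ no $\lambda$ has yet taken over, so $M_i=t^i$. On a block $i_j\le i<i_{j+1}$ we have $M_i=\lambda_{i_j}t^{i-i_j}$. Multiplying over the first range gives
\[
\prod_{i<i_1}t^i=t^{i_1(i_1-1)/2},
\]
and multiplying over the $j$-th block gives $\lambda_{i_j}^{m_j}t^{m_j(m_j-1)/2}$. Together these produce the displayed formula.

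The "in particular" bound follows because the true index set $i_1<\dots<i_k$ appears among the terms of the sum. The case $k=0$ gives the term $t^{n(n+1)/2}$.

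The main obstacle is bookkeeping rather than a genuine difficulty. One must check that the $j>i$ contributions in part 1 are absorbed, which uses the monotonicity $\lambda_1\ge\dots\ge\lambda_n$. One must also handle the convention $i_1=n+1$ when $k=0$ consistently.
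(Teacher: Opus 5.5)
Your proposal is correct and follows the paper's proof essentially step for step: you bound the widths via the frame coefficients at $t_0$, absorbing the $j>i$ terms by monotonicity of the $\lambda_j$; box containment then gives $\vol(V_I)\lsim\prod_i w_i$; the identity $w_i=t\,w_{i-1}+\lambda_i$ yields $w_i\sim M_i$; and you unwind block by block. The only difference is that the paper does not cite Lemma \ref{lem::frenetchange} but re-derives the needed one-sided bounds $|\langle\gamma(s)-\gamma(u),e_i(u)\rangle|\lsim|s-u|^i$ and $|\langle e_k(s),e_i(u)\rangle|\lsim|s-u|^{|k-i|}$ from Taylor expansion and Gram--Schmidt (with Hadamard's inequality bounding the coefficients), so the constants visibly depend only on $n$ and $c$ under the stated $C^n$ hypothesis; your citation of a lemma phrased in terms of unspecified curvature bounds leaves this dependence implicit.
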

\begin{proof}
Throughout, write $V_m(u):=\Span(\gamma'(u), \dots, \gamma^{(m)}(u))=\Span(e_1(u), \dots, e_m(u))$.

\textit{Step 1: Frenet coefficients.} We first show that for all $s, u \in [0,1]$ and $1 \leq i, k \leq n$,
\begin{equation} \label{eq::Rnvol-gamma}
|\langle \gamma(s)-\gamma(u),  e_i(u) \rangle| \lsim |s-u|^i
\end{equation}
and
\begin{equation} \label{eq::Rnvol-frame}
|\langle e_k(s),  e_i(u) \rangle| \lsim |s-u|^{|k-i|}.
\end{equation}
For \eqref{eq::Rnvol-gamma}, Taylor's theorem gives
\[
\gamma(s)-\gamma(u)=\sum_{r=1}^{i-1} \frac{(s-u)^r}{r!} \gamma^{(r)}(u)+O(|s-u|^i),
\]
and the sum lies in $V_{i-1}(u)$, which is orthogonal to $e_i(u)$. For \eqref{eq::Rnvol-frame} with $k<i$, the Gram--Schmidt procedure gives $e_k(s)=\sum_{m=1}^k a_{km}(s) \gamma^{(m)}(s)$ with coefficients bounded in terms of $n$ and $c$; indeed, by Hadamard's inequality, $|\det(\gamma', \dots, \gamma^{(n)})| \leq \vol_k(\gamma', \dots, \gamma^{(k)}) \prod_{m>k} |\gamma^{(m)}|$, so all Gram determinants of $\gamma', \dots, \gamma^{(k)}$ are bounded below. For $m \leq k<i$, expanding $\gamma^{(m)}(s)$ around $u$ to order $i-m-1$ gives a polynomial part in $V_{i-1}(u) \perp e_i(u)$ and a remainder of size $O(|s-u|^{i-m})$, which is $O(|s-u|^{i-k})$ because $m \leq k$ and $|s-u| \leq 1$. For $k>i$, we write $\langle e_k(s),  e_i(u) \rangle=\langle e_i(u),  e_k(s) \rangle$ and apply the previous case with the roles of $s$ and $u$ exchanged. The case $k=i$ is trivial. (Two-sided versions of these bounds are contained in Lemma \ref{lem::frenetchange}.)

\textit{Step 2: widths.} Let $x=\gamma(s)+\sum_{k=1}^n \beta_k e_k(s) \in V_s$ with $s \in I$, so that $|s-t_0| \leq t$. By \eqref{eq::Rnvol-gamma} and \eqref{eq::Rnvol-frame},
\[
|\langle x-\gamma(t_0),  e_i(t_0) \rangle| \lsim t^i+\sum_{k \leq i} \lambda_k t^{i-k}+\sum_{k>i} \lambda_k t^{k-i}.
\]
As $\lambda_k \leq \lambda_i$ for $k>i$ and $t \leq 1$, the last sum is at most $(n-i) \lambda_i$ and can be absorbed into the term with $k=i$. Since the diameter is at most twice the supremum, this shows the first assertion.

\textit{Step 3: volume.} By the first assertion, $V_I$ is contained in the box $\gamma(t_0)+\{\sum_{i=1}^n c_i e_i(t_0): |c_i| \leq C w_i\}$, so $\vol(V_I) \leq (2C)^n \prod_{i=1}^n w_i$. Setting $\lambda_0:=1$, we have $w_i=\sum_{j=0}^i \lambda_j t^{i-j} \sim \max_{0 \leq j \leq i} \lambda_j t^{i-j}$, and because $\lambda_j t^{i-j}=t \cdot \lambda_j t^{i-1-j}$ for $j \leq i-1$, this maximum is exactly $M_i=\max(t M_{i-1},  \lambda_i)$. Consequently, $M_i=t^i$ for $i<i_1$ and $M_i=\lambda_{i_j} t^{i-i_j}$ for $i_j \leq i<i_{j+1}$, so
\[
\prod_{i=1}^n M_i=t^{1+\dots+(i_1-1)} \prod_{j=1}^k \lambda_{i_j}^{m_j} t^{0+1+\dots+(m_j-1)}=t^{i_1(i_1-1)/2} \prod_{j=1}^k \lambda_{i_j}^{m_j} t^{m_j(m_j-1)/2}.
\]
The final assertion follows by bounding this term by the sum over all increasing sequences $i_1<\dots<i_k$.
\end{proof}
\begin{lemma} \label{lem::distplanetocurve}
Let $\gamma: [0,1] \to \R^n$ be a non-degenerate curve. There is a constant $c_0$ depending only on the dimension and on the curvature bounds for $\gamma$ such that for any hyperplane $H$ intersecting the curve in a point $t_0$,  for any $s_1<s_2 \leq c_0$,  we have that the size of the intervals in which the curve is close to the hyperplane satisfies
\[
|\{t: |\langle \gamma(t)-\gamma(t_0),  \vec{n}_H \rangle| \leq s_1\}| \lsim \left(\frac{s_1}{s_2} \right)^{1/n} |\{t: |\langle \gamma(t)-\gamma(t_0),  \vec{n}_H \rangle| \leq s_2\}|.
\]
The implicit constant also depends only on the dimension and on the curvature bound for $\gamma$.  
\end{lemma}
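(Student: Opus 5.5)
Set $g(t):=\langle \gamma(t)-\gamma(t_0), \vec{n}_H \rangle$, so $g(t_0)=0$, and write $E(s):=\{t: |g(t)| \leq s\}$. The whole argument is a one-dimensional statement about $g$. The key input is non-degeneracy: since $\dot\gamma,\dots,\gamma^{(n)}$ span $\R^n$ with determinant at least $c_0$ and bounded norms, for every unit vector $\vec{n}_H$ we have
\[
\max_{1 \leq k \leq n} |g^{(k)}(t)|=\max_{1\le k\le n}|\langle \gamma^{(k)}(t), \vec{n}_H\rangle| \gsim 1
\]
uniformly in $t$ and $H$. By continuity, $[0,1]$ splits into boundedly many intervals $J$ of fixed length, on each of which some fixed $g^{(k)}$ with $k \leq n$ has $|g^{(k)}| \gsim 1$. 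The number of intervals and the lower bound depend only on the curvature bounds.

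\textbf{Step 1: structure of sublevel sets.} On each such $J$, $g^{(k)}$ has constant sign, so $g$ has at most $k \leq n$ monotone pieces. Hence $E(s)\cap J$ is a union of at most $O(n)$ intervals, and in total $E(s)$ is a union of $O(1)$ intervals.

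\textbf{Step 2: a local comparison.} Fix one component interval $I_1$ of $E(s_1)$ and let $I_2 \supset I_1$ be the component of $E(s_2)$ containing it. I will show $|I_1| \lsim (s_1/s_2)^{1/n}|I_2|$. For this I use the polynomial-like behaviour of $g$: since $|g^{(k)}| \gsim 1$ on $J$, $g$ cannot stay small on long intervals. Quantitatively this is the Vinogradov/van der Corput sublevel estimate, which gives $|\{|g|\le s\}\cap J|\lsim s^{1/k}$.

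The complementary statement I need is a growth estimate. If $|g| \leq s_1$ on $I_1$ of length $\ell$, then the expansion of $g$ around a point of $I_1$ behaves like a degree $\le k$ polynomial: its coefficients of order $<k$ are controlled by Lagrange interpolation at $k+1$ equispaced points of $I_1$, scaled by $s_1\ell^{-j}$, and the $k$-th term has size $\sim 1$. Hence on the dilate $\lambda I_1$ we get $|g| \lsim s_1\lambda^{k}+(\lambda\ell)^k$ (up to constants). Conversely, $g$ must exceed $s_2$ within distance about $\ell (s_2/s_1)^{1/k}$ of $I_1$. The reason is that a function with $|g^{(k)}|\gsim 1$ satisfies a Remez-type inequality:
\[
\sup_{\lambda I_1}|g| \gsim \lambda^{k}\sup_{I_1}|g|
\]
as long as $\sup_{I_1}|g|$ is comparable to $s_1$, which holds after shrinking $s_1$ to the actual sup.

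That lower-bound direction is only needed when $I_2$ is not cut off by the endpoints of $[0,1]$ or of $J$. Boundary effects are absorbed because $s_2 \leq c_0$ keeps everything inside a single $J$ or in boundedly many of them.

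\textbf{Main obstacle.} Controlling the comparison needs care. I want $|I_1|/|I_2| \lsim (s_1/s_2)^{1/k} \le (s_1/s_2)^{1/n}$, using $s_1<s_2$ and $k\le n$. I would prove it by applying the Remez inequality for functions with a $k$-th derivative bounded below: if $|g| \leq s_1$ on $I_1$, then $|g|\le s_2$ forces the enclosing interval to have length $\gsim (s_2/s_1)^{1/k}|I_1|$, unless $|I_2|$ reaches size $\sim 1$. In that case the bound $|I_1| \lsim s_1^{1/k}$ from van der Corput together with $|I_2|\gsim s_2^{1/k}$ (which holds since $|g'|\lsim1$ gives $|E(s_2)|\gsim s_2$… adjusted via $t_0\in E$ and the polynomial upper bound $|g(t)|\lsim |t-t_0|$) gives the same ratio. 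Finally I sum over the $O(1)$ components to conclude. The delicate case I expect is when $g$ has several nearby near-zeros, so that $I_1$ is long but $g$ oscillates. Here the interpolation argument must use that $g$ has at most $k$ zeros of its derivatives on $J$, and this is where I would spend most effort.
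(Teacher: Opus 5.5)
The gap is in Step 2, at the point you yourself flag. The inequality you give as ``the reason'', $\sup_{\lambda I_1}|g|\gsim\lambda^k\sup_{I_1}|g|$ for $|g^{(k)}|\gsim 1$, is false. Take $g(t)=\eta(t-t_0)+(t-t_0)^2$ with $0<\eta\ll1$, so that near $t_0$ you are forced to use $k=2$, since $g'$ vanishes at $t_0-\eta/2$. Take also $s_1\ll\eta^2$. The component $I_1$ of $\{|g|\le s_1\}$ at $t_0$ has length $\sim s_1/\eta$. On $\lambda I_1$ with $1\ll\lambda\ll\eta^2/s_1$ one only has $|g|\sim\lambda s_1$.

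More importantly, the inequality points the wrong way. A lower bound on how fast $g$ grows away from $I_1$ bounds $|I_2|$ from \emph{above}. The lemma instead needs $|I_2|\gsim(s_2/s_1)^{1/n}|I_1|$, i.e.\ an \emph{upper} bound on the growth of $g$. A lower bound on $g^{(k)}$ can never by itself make $I_2$ long. Your fallback for $|I_2|\sim1$ also uses $|I_2|\gsim s_2^{1/k}$. That does not follow from $|g(t)|\lsim|t-t_0|$, which gives only $\gsim s_2$, and only for the component through $t_0$. It is not needed there anyway.

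The repair uses ingredients you already wrote down, combined differently. Interpolate $g$ at $n$ equispaced nodes of $I_1$ (length $\ell$) and bound the remainder by $\Vert\gamma^{(n)}\Vert_\infty$. For $\lambda\ge1$ this gives $|g|\lsim\lambda^{n-1}s_1+(\lambda\ell)^n$ on $\lambda I_1\cap[0,1]$. The lower bound on derivatives enters only through your sublevel estimate, summed over the $O(1)$ intervals $J$: $\ell\le|E(s_1)|\lsim s_1^{1/n}$, as $s_1\le1$. Hence $|g|\lsim\lambda^n s_1$ on $\lambda I_1\cap[0,1]$.

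Now take $\lambda=c(s_2/s_1)^{1/n}$; the case $s_2\lsim s_1$ is trivial. Then $\lambda I_1\cap[0,1]$ lies in $I_2$, so $|I_2|\gsim\min(1,\lambda\ell)$. If the minimum is $1$, use $\ell\lsim s_1^{1/n}\le(s_1/s_2)^{1/n}$; this is where $s_2\le c_0\le1$ enters. Either way $|I_1|\lsim(s_1/s_2)^{1/n}|I_2|$. Summing over the $O(1)$ components from Step 1 gives the lemma. Oscillation of $g$ inside $I_1$ plays no role, so your ``delicate case'' disappears.

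For comparison, the paper argues purely locally at $t_0$. It expands $g$ with coefficients $\alpha_j$ and asserts $|g(t)|\sim\max_j|\alpha_j||t-t_0|^j$. It then reads off the sublevel length $\sim\min_j(|\alpha_j|^{-1}s)^{1/j}$ and compares $s_1$ with $s_2$ via the index attaining the minimum at $s_2$. The repaired component-wise argument is more robust. It needs neither that two-sided equivalence, which fails under cancellation (e.g.\ at $t=t_0-\eta$ in the example above), nor that the sublevel set be a single interval around $t_0$.
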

\begin{proof}
Let $\alpha_j:=\frac{1}{j!}\langle \gamma^{(j)}(t_0),  \vec{n}_H \rangle$ and note that by Taylor approximation,  for $t$ sufficiently close to $t_0$,
\[
|\langle \gamma(t)-\gamma(t_0),  \vec{n}_H \rangle|=\left|\sum_{j=1}^n \alpha_j (t-t_0)^j \right|+O(|t-t_0|^{n+1}).
\]
In particular,  this implies that 
\[
|\langle \gamma(t)-\gamma(t_0),  \vec{n}_H \rangle| \sim \max_{j=1, \dots, n} |\alpha_j| |t-t_0|^j.
\]
Hence,  for sufficiently small $s$,  we get that if $t$ is a point such that $s=|\langle \gamma(t)-\gamma(t_0),  \vec{n}_H \rangle|$,  then
\[
|t-t_0| \sim \min_{j=1}^n( |\alpha_j|^{-1} s)^{1/j}.
\]
For the two intervals corresponding to $s_1$ and $s_2$,  we consequently find that 
\[
|\{t: |\langle \gamma(t)-\gamma(t_0),  \vec{n}_H \rangle| \leq s_i\}| \sim \min_{j=1}^n( \alpha^{-1}_j s_i)^{1/j}.
\]
Now,  if for $s_i=s_2$ the minimum on the right is attained for some index $j$,  then 
\begin{align*}
|\{t: |\langle \gamma(t)-\gamma(t_0),  \vec{n}_H \rangle| \leq s_1 \}| &\lsim ( \alpha^{-1}_j s_1)^{1/j}=\left(\frac{s_1}{s_2} \right)^{1/j} ( \alpha^{-1}_j s_2)^{1/j} \\
&=\left(\frac{s_1}{s_2} \right)^{1/j} \min_{j=1}^n( \alpha^{-1}_j s_2)^{1/j} \\
&\lsim \left(\frac{s_1}{s_2} \right)^{1/j} |\{t: |\langle \gamma(t)-\gamma(t_0),  \vec{n}_H \rangle| \leq s_2 \}|.
\end{align*}
As $j \leq n$, this shows the claim.  
\end{proof}
For sets $A, B$ in $\R^3$ we set
\[
\conv(A, B):=\{\lambda a+(1-\lambda)b: a \in A, b \in B, 0 \leq \lambda \leq 1\}.
\]
\begin{lemma} \label{lem::R3volgain}
Let $\gamma$ be a non-degenerate curve in $\R^3$. Let $\eta>0$ and let $V_j=V(t_j)$ be its $1 \times \eta \times \eta^2$-adapted minimal cover with coarseness $\Delta=\eta$.  Then for any $t_i \neq t_j$,  
\[
\vol(\conv(V_i, V_j)) \lsim |t_i-t_j|^2 \max\left(\eta,  |t_i-t_j|^2 \right).
\]
\end{lemma}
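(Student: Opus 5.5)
The plan is to bound $\conv(V_i,V_j)$ by a box in the Frenet frame at one endpoint, say $e_1(t_i),e_2(t_i),e_3(t_i)$, and multiply its three widths. Write $t:=|t_i-t_j|$; since the $t_j$ are $\Delta=\eta$-separated we may assume $t\gsim\eta$. Here $V_i$ is the box $\{\gamma(t_i)+\sum\beta_k e_k(t_i): |\beta_1|\le 1,|\beta_2|\le\eta,|\beta_3|\le\eta^2\}$. A convex hull of two sets has width in any direction at most the sum of the widths of the two sets plus the offset between their centres. So it suffices to control, for each $k$, the width of $V_i\cup V_j$ in direction $e_k(t_i)$.

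For this I would use Lemma \ref{lem::Rnvolnghbhd}(1), or directly Lemma \ref{lem::frenetchange}, with $\lambda=(1,\eta,\eta^2)$, the interval $I=[t_i,t_j]$ of length $t$, and the union taken over the two points $t_i,t_j$ only. This gives the widths
\[
w_1\lsim 1,\qquad w_2\lsim t^2+t+\eta\lsim t,\qquad w_3\lsim t^3+t^2+\eta t+\eta^2\lsim t^2 ,
\]
using $\eta\lsim t\le 1$. The product $w_1w_2w_3\lsim t^3$ is too weak: the statement claims $t^2\max(\eta,t^2)$, which is smaller when $t\ll 1$. The naive box ignores that the long directions $e_1(t_i)$ and $e_1(t_j)$ are nearly parallel, so I would not take the product of widths in a fixed frame.

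Instead I would write $\conv(V_i,V_j)$ as a union of segments $[x,y]$ with $x\in V_i$, $y\in V_j$, and parametrise $x=\gamma(t_i)+\beta_1e_1(t_i)+\dots$ and $y=\gamma(t_j)+\beta_1'e_1(t_j)+\dots$. The convex hull is then contained in the Minkowski-type set
\[
\{\lambda x+(1-\lambda)y\}\subset P+S,
\]
where $P:=\conv(\gamma(t_i),\gamma(t_j))+\{\text{short parts}\}$ and $S:=\{\beta_1 e_1(t_i)+\beta_1'e_1(t_j)\text{-combinations}\}$. The key observation is that $\beta_1e_1(t_i)$ and $\beta_1'e_1(t_j)$ span a planar region. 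It contains the direction $e_1(t_i)$ with extent $1$, and its width in the $e_2(t_i)$ direction is $\lsim|\langle e_1(t_j),e_2(t_i)\rangle|\lsim t$. Its width in the $e_3(t_i)$ direction is only $\lsim t^2$, but it is essentially two-dimensional with area $\lsim t$.

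So I would bound the volume as (area of a two-dimensional cross-section) times (thickness orthogonal to the plane $\Pi:=\Span(e_1(t_i),e_1(t_j))$). The normal of $\Pi$ is close to $e_3(t_i)$ up to $O(t)$. Measured against this normal, the contributions are:
\begin{itemize}
\item the offset $\gamma(t_j)-\gamma(t_i)$, of size $\lsim t^3+t\cdot t^2$ after subtracting its $\Pi$-component;
\item the $e_2$-parts, of size $\eta\cdot t$;
\item the $e_3$-parts, of size $\eta^2$.
\end{itemize}
The cross-section within the plane has area $\lsim 1\cdot(t+\eta)\lsim t$. This gives
\[
\vol\lsim t\,(t^3+\eta t+\eta^2)\lsim t^2\max(\eta,t^2),
\]
using $\eta\lsim t$, which is the claim.

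The main obstacle is making the "plane times thickness" bound rigorous. One has to show that $\conv(V_i,V_j)$ lies in a parallelepiped with sides $1$ (along $e_1(t_i)$), $\lsim t$ (along the in-plane direction orthogonal to $e_1(t_i)$), and $\lsim t^3+\eta t+\eta^2$ (along the normal of $\Pi$). This requires estimating $\langle e_k(t_j),\nu\rangle$ for the plane normal $\nu$ via Lemma \ref{lem::frenetchange} one order more precisely than the fixed-frame bound gives. Concretely, $\nu=e_3(t_i)+O(t)e_2(t_i)+O(t^2)e_1(t_i)$ after normalisation. One then checks each generator $\gamma(t_j)-\gamma(t_i)$, $e_2(t_j)$ and $e_3(t_j)$ against $\nu$, using the Taylor expansion of $\gamma$ to third order for the offset term. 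If that expansion is delicate, the fallback is to choose $\nu$ as the normal of $\Span(e_1(t_i),\gamma(t_j)-\gamma(t_i))$ instead and redo the same bookkeeping.
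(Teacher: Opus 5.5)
Your argument is correct. It produces the same $1\times t\times\max(\eta t,t^3)$ box as the paper; the geometry is the same, but the execution differs.

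The paper only sketches the lemma for the moment curve. It slices the convex hull by planes perpendicular to $e_1(t_i)$. Ignoring the base-point shift, it argues that each slice is a scaled copy of a planar $\eta\times\eta^2$ configuration of area $t\max(\eta^2,\eta t)$. It then adds the $t^3$ shift of $\gamma(t_j)$ in the third direction by hand.

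Your normal $\nu$ to $\Span(e_1(t_i),e_1(t_j))$ is the direction hidden in that picture: in each slice, $\Pi$ traces the line joining the two box centres. You instead enclose $\conv(V_i,V_j)$ in one box with orthonormal edges $e_1(t_i),\mu,\nu$ and bound the slab widths of $V_i\cup V_j$. This avoids both the moment-curve model and the heuristic that the layers are linearly scaled copies. The argument works for any non-degenerate curve using only the estimates of Lemma \ref{lem::frenetchange}.

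The step you flag as the main obstacle is routine.
\begin{itemize}
\item Since $\nu\perp e_1(t_i)$, write $\nu=a\,e_2(t_i)+b\,e_3(t_i)$.
\item From $\langle \nu,e_1(t_j)\rangle=0$, $|\langle e_1(t_j),e_2(t_i)\rangle|\sim t$ and $|\langle e_1(t_j),e_3(t_i)\rangle|\lsim t^2$, you get $|a|\lsim t$. The lower bound $|\langle e_1(t_j),e_2(t_i)\rangle|\gsim t$ is where the two-sided bound of Lemma \ref{lem::frenetchange} (curvature bounded below) enters. It needs $t$ small; for $t\gsim 1$ the claim is trivial.
\item The fixed-frame bounds $|\langle\gamma(t_j)-\gamma(t_i),e_k(t_i)\rangle|\lsim t^k$ and $|\langle e_k(t_j),e_l(t_i)\rangle|\lsim t^{|k-l|}$ then give the contributions $t^3$, $\eta t$, $\eta^2$ to the $\nu$-width, and $t+\eta$ for the $\mu$-width.
\end{itemize}
No third-order expansion is needed, and $t\geq\eta$ from the $\Delta$-separation finishes it.

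Your fallback, however, would fail. For $\nu'$ normal to $\Span(e_1(t_i),\gamma(t_j)-\gamma(t_i))$ one has $|\langle e_1(t_j),\nu'\rangle|\sim t^2$. For the moment curve with $t_i=0$, $\nu'\propto(0,-t,1)$ and $\langle(1,2t,3t^2),(0,-t,1)\rangle=t^2$; in general the constant is a nonzero multiple of curvature times torsion. So the unit-length side of $V_j$ alone gives thickness $t^2$, and you are back to the weak bound $t^3$. The plane through the two long axes is the right choice because the $O(1)$-long sides then cost nothing. The only price is the genuine $t^3$ offset, which is exactly the paper's base-point correction.
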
 
\begin{proof}
Proving this is just a calculation that can be easily checked,  but because the geometric intuition is helpful for understanding later proofs,  we will give a rough sketch here.  Think of $\gamma(t)=(t, t^2, t^3)$ as the moment curve, $t_i=0$ and a point $t=t_j$.  Ignoring the change in base point for a moment and only considering the change in the orientation of the long side of the box from along $(1, 0, 0)$ to along $(1,  2t, 3t^2)$,  when we project along the first coordinate axis (or intersect with a plane of the form $\{1\} \times \R^2$,  say),  the projected curve and the projected neighbourhoods look like a non-degenerate planar curve akin to $(t, t^2)$ with its $\eta \times \eta^2$-minimal cover up up to a value of $t=c_0$ for a small $c_0$;  the first neighbourhood looks like a box of width $\eta$ in the second coordinate direction and $\eta^2$ in the third,  while the second box looks similar but being centered at a point like $(t, t^2)$ with the long axis being in direction $(1,  t)$.  The area contained between the two boxes is then $t \cdot \max(\eta^2,  \eta t)=\eta t^2$, where the first term comes from the width of the whole picture and the second from the height of the second box in the third coordinate (which is gained at an angle $t$ over an interval of length $\eta$).     This is the most important picture to keep in mind,  but does not account for the presence of $\max(\eta, t^2)$.  

All layers of the convex hull (intersection with planes of the form $\{\lambda\} \times \R^2$ for $1/2 \leq \lambda \leq 1$,  say) linearly scaled copies of this picture,  and this seems to argue for the total volume to be of order $1 \times \eta t^2$ (volume of the layers times volume in each).  However,  this is only because we have not taken the shift of the basepoint into account,  which translates everything by $t^3$ in the third coordinate direction; thus,  we not only have a height of $\eta t$ but an extra uncertainty of $t^3$ in the height in the third coordinate direction and because the contribution of the base point does not scale between layers while the previous picture does,  this leads to genuine volume gain overall.  Thus,  the volume of the convex hull is really $1 \times t \times \max(\eta t,  t^3)$,  which is the claimed result.  
\end{proof}

\section{Counting rational points near curves in $\R^3$} \label{sec::R3curves}
In this section,  we count the rational points near a non-degenerate curve $\gamma : [0, 1] \to \R^3$.
\begin{restated}{Theorem \ref{thm::strictcount}}
Let $\gamma : [0, 1] \to \R^3$ be a non-degenerate curve.  For all $\delta \geq Q^{-1/3}$,
\[
\mathcal{N}(\delta, Q) \lsim \delta^2 Q^2.
\]
\end{restated}
\begin{remark} \label{rem::R3sharp}
For $\delta \geq Q^{-1/3+\eta}$,  the contribution of the patches with an excess of rational points is $O(\delta^2 Q^{2-c \eta})$,  and arguing as in the planar case one expects a lower bound and an asymptotic formula in this range.  At $\delta=Q^{-1/3}$ itself,  a rational tangent line can contribute a positive proportion of the main term,  so only the upper bound holds there.  Similarly,  points coming from these examples start to dominate for $\delta < Q^{-1/3}$.  For example,  for $\gamma$ the moment curve $(t, t^2, t^3)$,  the rational points lying exactly on the first coordinate axis start to dominate the main term at this threshold.
\end{remark} 
The key part of the proof of Theorem \ref{thm::strictcount} is to transfer the problem of counting rational points near curves in the dense setting of $\delta \geq Q^{-3/5}$ to the dual counting problem of rational points in sparse (that is, $\delta \leq Q^{-3/5}$) neighbourhoods of the dual curve.  This dual problem can then be solved by combining geometric considerations with the determinant method.  

More precisely,  after applying the duality of Theorem \ref{thm::mainduality} we are left with two instances we have to control regarding to the sparse dual counting problem.  Firstly,  the proportion of $1 \times \sqrt{\frac{\delta}{Q}} \times \frac{\delta}{Q}$-neighbourhoods $\widetilde{V}_j$ which contain any $\delta^{-1}$ rational,  and then the proportion of neighbourhoods that contain two distinct rationals of respective height $m_1, m_2$ such that $m_1 m_2 \leq \delta^{-2}$.   

By the determinant method together with very basic volume considerations,  the first problem essentially reduces to considering when there is an interval $I$ and a rational hyperplane $H$ such that many of the neighbourhoods $\widetilde{V}_j$ making up $\widetilde{V}_I$ for some interval $I$ contain a $\delta^{-1}$-rational and all of those rationals lie on $H$.

One concern in the proof of Theorem \ref{thm::strictcount} are the neighbourhoods $V_j$ of the minimal cover in which all rationals lie on a single line and where correspondingly the dual neighbourhood $\widetilde{V}_j$ contains two distinct rationals of low denominator.  To control the instances of points containing a single rational,  we calculated the volume $V_I$ over an interval to get that they must lie in a single hyperplane,  but for the case of two distinct rationals,  this can (and has to) be refined.  We now use that the convex hull of the two neighbourhoods itself already has to contain significant volume in the case that there are four rationals not contained in a hyperplane.  

To bound this,  we use Lemma \ref{lem::R3volgain}.
\begin{prop} \label{prop::R3linecase}
Let $\gamma$ be a non-degenerate curve in $\R^3$,  let $M$ be an integer and  $\eps>0$.  Let $V_j=V(t_j),  j=1, \dots, N$ with $N \sim \eps^{-1}$ be the $1 \times \eps \times \eps^2$-adapted cover of the curve.  There is a set $J \subset \{1, \dots, N\}$ such that 
\[
|t_i-t_j| \gsim \min(M^{-1/2}, M^{-1} \eps^{-1/2})
\]
for all $i \neq j \in J$,  which in particular implies
\begin{align*}
|J| \lsim \max\left (\sqrt{M},  M \sqrt{\eps} \right),
\end{align*}
such that each $V_j,  j \in J$ contains two distinct rational points of height $m_1, m_2$ respectively such that $M/2 \leq m_1 m_2 \leq M$,  and 
it holds that for each $1 \leq i \leq N$,  if $V_i$ with $i \notin J$ contains two distinct rational points of height $m_1, m_2$ respectively such that $M/2 \leq m_1 m_2 \leq M$,  then the rational points in $V_i$ are such that they lie in a common hyperplane with the rational points of one of the $V_j, j \in J$.  
\end{prop}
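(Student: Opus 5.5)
The plan is to construct $J$ greedily along the curve and use the determinant method (Lemma \ref{lem::ratrepellancy}) together with the volume bound of Lemma \ref{lem::R3volgain} to force the separation.

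Let $\mathcal{I}$ be the set of indices $i$ for which $V_i$ contains two distinct rationals of heights $m_1,m_2$ with $M/2 \leq m_1 m_2 \leq M$. Order $\mathcal{I}$ by $t_i$. Put the smallest element into $J$. Then, inductively, take the next index $i\in\mathcal{I}$ whose rationals do not lie in a common hyperplane with the rationals of any already chosen $j\in J$, and add it to $J$. By construction, every $i \in \mathcal{I}\setminus J$ has its rationals in a common hyperplane with those of some $V_j$, $j\in J$. This is the second assertion. It remains to show the separation, and the bound on $|J|$ then follows by counting $\gsim$-separated points in $[0,1]$.

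For the separation, fix $i \neq j \in J$. Then $V_i$ contains rationals $p_1,p_2$ of heights $m_1,m_2$, and $V_j$ contains $p_3,p_4$ of heights $m_3,m_4$, with $m_1m_2, m_3m_4 \leq M$. By the choice of $J$, these four points are not coplanar. The computation in the proof of Lemma \ref{lem::ratrepellancy} gives the refined bound
\[
\vol(\conv(p_1,\dots,p_4)) \gsim (m_1m_2m_3m_4)^{-1} \geq M^{-2}.
\]
This holds because the determinant equals $q_1^{-1}\cdots q_4^{-1}$ times a nonzero integer; one uses the individual heights rather than $Q^{-4}$. On the other hand, all four points lie in $\conv(V_i,V_j)$. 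Lemma \ref{lem::R3volgain} with $\eta=\eps$ and $t:=|t_i-t_j|$ then gives
\[
M^{-2} \lsim t^2\max(\eps, t^2).
\]
There are two cases. If $t^2 \geq \eps$, this gives $t^4 \gsim M^{-2}$, so $t \gsim M^{-1/2}$. Otherwise it gives $t^2 \gsim M^{-2}\eps^{-1}$, so $t \gsim M^{-1}\eps^{-1/2}$. In either case
\[
t \gsim \min(M^{-1/2}, M^{-1}\eps^{-1/2}).
\]
Hence $|J| \lsim \max(\sqrt{M}, M\sqrt{\eps})$, together with the trivial $+1$ and the cap $N\sim\eps^{-1}$.

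The main obstacle is the non-coplanarity used above. The greedy rule as stated only excludes a common hyperplane of $V_i$'s rationals with a \emph{single} earlier $V_j$, so the pairwise comparison is exactly what it guarantees for $i,j\in J$. This is enough for the pairwise volume argument. A degenerate possibility must still be checked: the two rationals in $V_i$ and the two in $V_j$ could all be collinear. That case is covered by the coplanar alternative, so it is excluded for $i,j\in J$ by construction.

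A second check is that Lemma \ref{lem::R3volgain} applies to the boxes here. Its adapted cover is the $1\times\eta\times\eta^2$ cover with $\Delta=\eta$, which matches the present $1\times\eps\times\eps^2$ cover with $\eta=\eps$.
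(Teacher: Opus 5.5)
Your proposal is correct and follows the paper's argument. Like the paper, you reduce to showing that two indices whose four rationals are not coplanar must satisfy $|t_i-t_j| \gsim \min(M^{-1/2},M^{-1}\eps^{-1/2})$. You bound $\vol(\conv(V_i,V_j))$ below by $(m_{i,1}m_{i,2}m_{j,1}m_{j,2})^{-1}\geq M^{-2}$ using the determinant computation of Lemma~\ref{lem::ratrepellancy} with individual heights, bound it above by Lemma~\ref{lem::R3volgain}, and finish with the same case distinction on the maximum. The only difference is that you make explicit the greedy selection of $J$, which the paper leaves implicit in its ``it suffices to show'' step.
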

\begin{proof}
It suffices to show that for each $1 \leq i \leq j \leq N$,  if $V_i$ and $V_j$ are such that both contain distinct rationals of height $m_{i, 1}, m_{i, 2}$ and $m_{j, 1}, m_{j, 2}$,  respectively,  with $M/2 \leq m_{i, 1} m_{i, 2},  m_{j, 1} m_{j, 2} \leq M$ and such that the four rationals are not contained in a common hyperplane,  then 
\[
|t_i - t_j| \gsim  \min(M^{-1/2}, M^{-1} \eps^{-1/2}).
\]
This follows from a quick application of the determinant method and the preceding lemma.  Indeed,  
\[
\frac{1}{M^2} \sim \frac{1}{m_{i, 1} m_{i, 2} m_{j, 1} m_{j, 2}} \lsim \vol(\conv(V_i,  V_j)) \lsim |t_i-t_j|^2 \max \left(|t_i-t_j|^2,  \eps \right),
\]
where the second inequality is due to the determinant method and the last one due to Lemma \ref{lem::R3volgain}.  With a case distinction for the maximum on the right, we see the claim.  
\end{proof}
We are now in the position to prove Theorem \ref{thm::strictcount}.
\begin{proof}[Proof of Theorem \ref{thm::strictcount}]
Let $\tau \leq 1/3$,  $\delta=Q^{-\tau}$ and consider the non-degenerate curve $\gamma$ and its $\sqrt{\frac{\delta}{Q}} \times \frac{\delta}{Q} \times \frac{\delta}{Q}$-adapted cover $V_j=V(t_j),  j=1, \dots, N$ with $N \sim \sqrt{\frac{Q}{\delta}}$.  By Theorem \ref{thm::mainduality} (with $B:=\max_i B_i$),  for every $j$ such that 
\[
N_j=\#(V_j \cap \mathcal{A}_Q) \gsim A Q^{n+1} \vol(V_j)
\]
the following holds:
\begin{enumerate}
\item There are $m_1,  m_2 \in \N$ and $B \geq 1$ such that 
\[
m_1 m_2 AB \lsim \delta^{-2}
\]
and there are two distinct rational points of respective height $m_1$ and $m_2$ in $(\widetilde{V}_j)_B$.  In this case,  the contribution of rationals in $V_j$ comes from a single dense line. 
\item There is an $m \in \N$,  $B \geq 1$ such that 
\[
mAB \lsim \delta^{-1}
\]
and there is a rational point of height $m$ lying in the $1 \times B \sqrt{\frac{\delta}{Q}} \times B \frac{\delta}{Q}$ neighbourhood $(\widetilde{V}_j)_B$ of $t_j$.  In this case,  the contribution from rationals in $V_j$ come from a single hyperplane. 
\end{enumerate}
We bound now both occurrences using the auxiliary results above.  

In the first case,  we use a dyadic decomposition to fix the size of $M \leq \delta^{-2}$ and $B \leq \delta^{-2}$.  Applying Proposition \ref{prop::R3linecase} with the values $M$ and $\eps=B \sqrt{\frac{\delta}{Q}}$ (enlarging the $B \frac{\delta}{Q}$ in the third coordinate to $B^2 \frac{\delta}{Q}$ in the process),  we see that we can partition the interval $[0,1]$ into subintervals $I_m$ of length $\min(M^{-1/2}, M^{-1} \eps^{-1/2})$ in which the rationals of any two of the lines lie in a common hyperplane.  

As all of the lines are pairwise coplanar,  we are left with two choices: either they all lie in the same plane,  or they all pass through the same rational point.

If they all lie in the same plane,  let $L_m$ be the common (necessarily rational) plane and let $V_{i_m}$ be the neighbourhood with the maximal contribution.  All of the rationals in the $V_i$ of the window are then contained in $L_m$,  and by the planar case of the Theorem,  as both of them have roughly the same number of points,  the hyperplane $L_m$ must contain so many rationals that over the entire intersection length of the hyperplane the contribution is at least that of the lines.  Hence,  in this case we can absorb the contribution of $V_i$ and $V_j$ into the bound from the hyperplane case.  This leaves the case in which all lines pass through the same point.  For those,  it is not hard to see that the total count of rationals is dominated by the maximum attained on a single line,  as the angle between the curve and the line through the point grows linearly as we move away from the point at which it is smallest.

In the case that there is only a single $V_j$ with rationals of height multiplying to roughly $M$,  this can be significantly more than the contribution from any hyperplane it is contained in.  However,  by Proposition \ref{prop::R3linecase},  this can happen in at most $\max\left (\sqrt{M},  M \sqrt{\eps} \right)$ many instances,  which we call the contribution from lines.  

For $M \geq \eps^{-1}=\frac{1}{B} \sqrt{\frac{Q}{\delta}}$,  the term on the right dominates and we get a total contribution of
\[
M \sqrt{\eps} \frac{\delta^{-2}}{BM} Q^4 \left(\frac{\delta}{Q} \right)^{5/2}=\frac{1}{\sqrt{B}} \left(\frac{\delta}{Q} \right)^{3/4} Q^2.
\]
Summing over the scales for $M$ and $B$ gives a total contribution of at most $\left(\frac{\delta}{Q} \right)^{3/4} Q^2 \log Q$,  which is less than $\delta^2 Q^2$ because $\tau<3/5$ (in particular for $\tau \leq 1/3$).  Hence the contribution of those points is negligible.

For all $M \leq \eps^{-1}$,  we get a total rational count proportional to $Q^{(3-\tau)/2}$,  which itself is coming from the $M=B=1$ case.  Indeed,  setting $\sum^\mathcal{D}$ to be the summation over a dyadic partition of the range and noting that $\sum^\mathcal{D}_k k^{-\alpha} \lsim 1$ for all $\alpha>0$,  we can estimate the total contribution of rationals for these lines by 
\[
\sum^\mathcal{D}_{1 \leq B \leq \delta^{-2}} \sum^\mathcal{D}_{1 \leq M \leq \eps^{-1}} \frac{\delta^{-2}}{MB} Q^4 \left(\frac{\delta}{Q} \right)^{5/2} \sqrt{M} \lsim \delta^{-2} Q^4 \left(\frac{\delta}{Q} \right)^{5/2}=Q^{(3-\tau)/2}.
\]

Since $\tau \leq 1/3$,  we have $Q^{(3-\tau)/2} \leq Q^{2-2\tau}=\delta^2 Q^2$,  so the lines contribute at most $\delta^2 Q^2$.  This is the only place where the restriction $\delta \geq Q^{-1/3}$ is needed,  and by Remark \ref{rem::R3sharp} it is sharp.

We are then left with estimating the contribution of the second kind,  in which the excess points on an interval $V_j$ with central point $t_j$ come from a single hyperplane $H=H_j$ that corresponds to a rational point $\vec{a}/m$ of height $m$ in physical space.  We let $(\lambda,  \theta,  z)$ be the coordinates of the point $\vec{a}/m-\widetilde{\gamma}(t_j)$ in the Frenet basis $\widetilde{e_1}(t_j),  \widetilde{e_2}(t_j),  \widetilde{e_3}(t_j)$.  In physical space,  this means that the hyperplane $H$ is at an angle $\theta$ to $\gamma^\prime(t_j)$ and at an angle $\lambda$ to $\gamma^{(2)}(t_j)$.  Now, use a dyadic decomposition to fix a value $\lambda \geq \left(\frac{\delta}{Q} \right)^{1/3}$; because when writing
\begin{equation} \label{eq::hyperchangeexp}
\langle \gamma(t)-\gamma(t_j),  \vec{n_H} \rangle=\alpha_0+\alpha_1 (t-t_j)+\alpha_2 (t-t_j)^2+\alpha_3 (t-t_j)^3+O((t-t_j)^4)
\end{equation}
with 
\[
|\alpha_0| \sim |z|,  \quad |\alpha_1| \sim \theta, \quad |\alpha_2| \sim \lambda,  \quad |\alpha_3| \sim 1,
\]
the maximal intersection any hyperplane with this value has with the $\frac{\delta}{Q}$ neighbourhood of $\gamma$ is then proportional to
\[
s_\lambda:=\sqrt{\lambda^{-1} \frac{\delta}{Q}},
\]
although depending on $\theta$ and $z$ this might also be shorter.  Values of $\lambda < \left(\frac{\delta}{Q} \right)^{1/3}$ do not have to be considered because the quadratic term is then dominated by the cubic term at the threshold of $\frac{\delta}{Q}$ in \eqref{eq::hyperchangeexp},  so they can be all absorbed into the $\lambda \sim \left(\frac{\delta}{Q} \right)^{1/3}$ case.  

Fix one of the dyadic scales and denote it by $\lambda_0$.  Now,  use a further dyadic decomposition on $1 \leq B \leq \left(\lambda_0 \frac{\delta}{Q} \right)^{-1/2}$,  setting $\theta=B \sqrt{\lambda_0 \frac{\delta}{Q}}$ (the value $\sqrt{\lambda_0 \frac{\delta}{Q}}$ is chosen such that for smaller $\theta$, the quadratic contribution dominates the linear at the threshold of $\frac{\delta}{Q}$,  meaning that they can be absorbed into the $B=1$ case) and fix a value of $B=B_0$ and $\theta_0$.  The longest possible intersection range is then 
\[
s_{\lambda_0, B_0}=\frac{1}{B_0} s_{\lambda_0}
\]
and this intersection range is attained on the interval of length $s_{\lambda_0}$ around $t_j$ only if $|z| \lsim B_0 \frac{\delta}{Q}$.  Moreover,  this intersection length might also be attained in the surrounding interval of total length 
\[
t_{\lambda_0, \theta_0}:=B_0 \sqrt{\lambda_0^{-1} \frac{\delta}{Q}}
\]
before the angle between curve and line gets too steep for the intersection to have a length as long.  This motivates the definition of the step size $\Delta^\prime=t_{\lambda_0, \theta_0}$ and the choice of a partition $t_i$ with 
\[
|t_i-t_{i+1}|=t_{\lambda_0, \theta_0}
\]
(in a short interval $[t_0-c_0, t_0+c_0]$ in which the curvature bounds hold) and neighbourhoods of the form
\[
W_i:=\{x \in \R^3: x=\widetilde{\gamma}(t_i)+\sum_{k=1}^3 \widetilde{e_k}(t_i) \beta_k,  \beta_1 \sim \lambda_0,   \beta_2 \sim \theta_0,  |\beta_3|\leq B_0^2 \frac{\delta}{Q} \}
\]
which are chosen such that each point $x_H$ at distance $\lambda_0, \theta_0$ to the dual curve $\widetilde{\gamma}$ (corresponding to a hyperplane $H$ at angle $\lambda_0, \theta_0$ to the curve in physical space) can lie in at most $O(1)$ possible neighbourhoods $W_i$ before the distance to the dual curve becomes too large.  

Now,  also fix an integer $m_0 \leq \frac{\delta^{-1}}{B_0}$ to be the height of the hyperplane $H$ (together with the $\lambda_0$ and $\theta_0$ already fixed) and note that as there is a contribution from at most
\[
\frac{\delta^{-1}}{B_0 m_0} \vol(V_j) Q^4
\]
many $Q$-rationals in each neighbourhood $V_i$ in which such a hyperplane intersects the neighbourhood of the curve in physical space and because the $m_0$-rationals in each neighbourhood $W_i$ can lead to at most $\frac{t_{\lambda_0, \theta_0}}{\sqrt{\frac{\delta}{Q}}}$ many $V_j$ to intersect such a hyperplane (with the prescribed height and the prescribed angle), 
the total contribution of rational points from such hyperplanes is at most 
\begin{align*}
&\#\{i: W_i \textrm{\; contains \; a \;} m_0 \textrm{-rational}\} \frac{t_{\lambda_0, \theta_0}}{\sqrt{\frac{\delta}{Q}}} \frac{\delta^{-1}}{B_0 m_0} \vol(V_j) Q^4 \\
&\sim \#\{i: W_i \textrm{\; contains \; a \;} m_0 \textrm{-rational}\}  \sqrt{\lambda_0^{-1} \frac{\delta}{Q}}  \frac{\delta^{-1}}{m_0}  \delta^2 Q^2.
\end{align*}
To bound the number of $W_i$ that can contain an $m_0$-rational we first apply Lemma \ref{lem::Rnvolnghbhd} and the determinant method  (Lemma \ref{lem::ratrepellancy}) to find that the only problem occurrences are hyperplanes $L$ in dual space dense in $m_0$-rationals and intersecting many $W_i$ and then use the planar case of the theorem to control the number of times this can occur. 

Firstly,  fix an interval $I$ of length $|I|=t \geq t_{\lambda_0, \theta_0}$ and note that by Lemma \ref{lem::Rnvolnghbhd} and because $\theta_0 t \geq B_0^2 \delta/Q$,  the volume of $W_I:=\bigcup_{i: t_i \in I} W_i$ is at most
\[
\vol(W_I) \lsim t^6+\lambda_0^3 t^3+\max(\lambda_0, t) \theta_0^2 t.
\]
By Lemma \ref{lem::ratrepellancy},  we must thus have
\begin{equation} \label{eq::R3mintsatisfying}
m_0^4 \lsim t^6+\lambda_0^3 t^3+\max(\lambda_0, t) \theta_0^2 t
\end{equation}
unless all $m_0$-rationals inside $W_I$ lie on a single hyperplane.  To firstly bound the `generic' contribution,  we need to show that if we let $t_{min}$ be the smallest $t$ satisfying \eqref{eq::R3mintsatisfying},  then 
\begin{equation} \label{eq::R3genericbound}
\frac{1}{t_{min}} \sqrt{\lambda_0^{-1} \frac{\delta}{Q}}  \frac{\delta^{-1}}{m_0} \lsim 1.
\end{equation}
To see this,  we note that it suffices to consider the case in which one of the first two terms attains the minimum on the right hand side of \eqref{eq::R3mintsatisfying} because making $B_0$ larger only increases the left hand side of \eqref{eq::R3genericbound}.  Solving for $t$ we then find
\[
t_{min} \gsim \min\left(m_0^{-2/3}, \lambda_0^{-1} m_0^{-4/3} \right)
\]
and thus a total contribution of
\begin{align*}
\frac{1}{t_{min}} \sqrt{\lambda_0^{-1} \frac{\delta}{Q}}  \frac{\delta^{-1}}{m_0} \delta^2 Q^2 &\lsim \max\left(m_0^{2/3}, \lambda_0 m_0^{4/3} \right) \sqrt{\lambda_0^{-1} \frac{\delta}{Q}} \frac{\delta^{-1}}{m_0}  \delta^2 Q^2\\
&\lsim Q^{(5\tau-3)/6} \max \left(\lambda_0^{-1/2} \delta^{1/3} m_0^{-1/3},  1 \right)  \delta^2 Q^2,
\end{align*}
where we used $\lambda_0 \leq 1$,  $m_0 \lsim \delta^{-1}$ and $\sqrt{\frac{\delta}{Q}} \delta^{-4/3}=Q^{(5\tau-3)/6}$ to bound the terms in the second part of the maximum.  We cannot bound the first term in the maximum any further,  but this is only because the result is sharp: For example,  for $\lambda_0= \left(\frac{\delta}{Q} \right)^{1/3}$, any single point of denominator $m_0$ contributes 
\[
\left(\frac{\delta}{Q} \right)^{1/3} \delta^{-1} m_0^{-1}=Q^{(2\tau-1)/3} m_0^{-1}=Q^{(5\tau-3)/6} \lambda_0^{-1/2} \delta^{1/3} m_0^{-1}
\]
many rational points in physical space,  and for example the moment curve contains $m_0^{2/3}$ many rational points of denominator $m_0$ exactly on the curve in dual space.   For the total count this term is harmless: as $\lambda_0 \geq \left(\frac{\delta}{Q} \right)^{1/3}$ and $m_0 \geq 1$,  we have $\lambda_0^{-1/2} \delta^{1/3} m_0^{-1/3} \leq Q^{(1-\tau)/6}$,  so the generic contribution is at most $Q^{(2\tau-1)/3} \delta^2 Q^2$ for each choice of the dyadic parameters $\lambda_0,  B_0,  m_0$.  Summing over these,  the generic contribution is $\lsim (\log Q)^3 Q^{(2\tau-1)/3} \delta^2 Q^2 \lsim \delta^2 Q^2$ for $\tau \leq 1/3$.

We now proceed to the case in which there is a hyperplane $L$ all of the $m_0$-rationals near the dual curve lie on.  In this case,  all the planes $H_j$ corresponding to the neighbourhoods $W_i$ that $L$ meets pass through the rational point $y:=L^\ast$ in physical space.  We first treat the case in which $y$ is close neither to a tangent line or an osculating plane of $\gamma$,  nor to the curve itself.  Letting $W_i$ (associated to a time $t_0:=t_i$) be one of the neighbourhoods the hyperplane intersects, define the parameter $t=t(t_0)$ to be the distance on the curve until there are three other neighbourhoods $W_j$ also containing an $m_0$-rational such that all  the $t_j$ associated to the $W_j$ are contained in $[t_0-t, t_0+t]$.  As a minor detail,  we first discount the case of $t>\lambda_0$ by noting that the neighbourhoods are contained within disjoint neighbourhoods of the $t \times t^2 \times t^3$-minimal cover of the dual curve then,  meaning that they cannot all lie on the same hyperplane $L$ and we are in the generic case,  which we have already dealt with.  We can thus assume $t \leq \lambda_0$.  

The hyperplane $L$ must be rational.  We let $k$ be its height and $\alpha_1$ and $\alpha_2$ be its first two coordinates in physical space,  or equivalently,  the angle of the hyperplane to $\widetilde{e_1}(t_0)$ and $\widetilde{e_2}(t_0)$ respectively.  By the determinant method (Lemma \ref{lem::ratrepellancy}),  any three $m_0$-rationals not lying on a line contain an area of at least $k/m_0^3$ with respect to the area measure of $L$,  and so,  if we project them along $e_1(t_0)$ (denoting the projection by $\pi_1$),  they contain an area of at least $\alpha_1 k/m_0^3$.  

Now,  we split into two cases,  firstly the one in which the density of the points is high enough that we expect every $W_i$ to contain a rational point,  and then the case in which this is not so. For the first case,  we note that if we have
\[
\frac{\alpha_1 k}{m_0^3} \lsim \vol(\pi_1(W_i)) \sim B_0^3 \lambda_0^{1/2} \left(\frac{\delta}{Q} \right)^{3/2},
\]
and thus 
\[
\alpha_1 \lsim \frac{m_0^3 B_0^3 \lambda_0^{1/2}}{k} \lambda_0^{1/2} \left(\frac{\delta}{Q} \right)^{3/2}.
\]

If we let 
\[
E_0:=\left\{x \in \R^3: x=\widetilde{\gamma}(t_0)+\sum_{k=1}^3 \widetilde{e_k}(t_0) \beta_k,  \beta_1 \sim \lambda_0 \right\},
\] 
then $L \cap E_0$ consists of a strip of width of at most $\lambda_0 \alpha_1$.  By the determinant method (Remark \ref{rem::rathyperorthrepellancy}),  we know that any $m_0$-rational not lying on $L$ must have a distance of at least $1/m_0 k$ to the plane.  If we let $u_1$ be the length of the interval around $t_0$ in at which the neighbourhoods $W_i$ have gained a height of at most $\lambda_0 \alpha_1$ over the hyperplane and $u_2$ be the length of the interval around $t_0$ until the height of $1/m_0 k$ over the hyperplane is attained,  then we can have at most a relative proportion of $u_1/u_2$ of the $W_i$ contain a rational.  By Lemma \ref{lem::distplanetocurve},  we have that
\[
\frac{u_1}{u_2} \lsim \left(\lambda_0 \alpha_1 m_0 k \right)^{1/3} \lsim m_0^{4/3}B_0 \lambda_0^{1/2} \left(\frac{\delta}{Q} \right)^{1/2} .
\] 
In each interval of length $I \geq \sqrt{\frac{\delta}{Q}}$ in which all neighbourhoods in dual space contain a rational of denominator $m_0$ and angle $B_0$ there is a maximum of at most 
\[
|I| \frac{\delta^{-1}}{m_0 B_0} \delta^2 Q^2
\]
many rational points near the curve in physical space 
coming from them (because this is true for the neighbourhoods making up the minimal cover).  Hence,  by the bound above, the relative density of $Q$-rationals in physical space on the interval of length $u_2$ is at most
\[
\frac{u_1}{u_2} \frac{\delta^{-1}}{B_0 m_0}  \delta^2 Q^2 \lsim m_0^{1/3} \lambda_0^{1/2} \delta^{-1} \left(\frac{\delta}{Q} \right)^{1/2} \delta^2 Q^2 \lsim Q^{(5\tau-3)/6} \delta^2 Q^2,
\]
where we used that $m_0 \lsim \delta^{-1}$ and 
\[
\delta^{-4/3} \left(\frac{\delta}{Q} \right)^{1/2}=Q^{(5\tau-3)/6} \leq 1.
\]
Hence,  the contribution coming from such hyperplanes $L$ remains bounded.  

For the second case,  assume that the hyperplane $L$ is such that 
\[
\frac{\alpha_1 k}{m_0^3} \geq \vol(\pi_1(W_i)).
\]
Let $s>0$ be the number such that
\[
\lambda_0^2 s^3=\alpha_1 \frac{k}{m_0^3};
\]
this $s$ is such that for an interval $I$ containing $t_0$ of length $|I|=s$,
\[
\pi_1(W_I) \sim \alpha_1 \frac{k}{m_0^3},
\]
where $W_I=\bigcup_{i, t_i \in I} W_i$.  By the planar case applied to the neighbourhoods $W_i$ around the curve $\gamma_0$,  there can be at most $O(1)$ many neighbourhoods $W_i$ in any intervals of length $s$ containing an $m_0$-rational on $L$. As each $W_i$ containing a rational leads to at most 
\[
\sqrt{\lambda_0^{-1} \frac{\delta}{Q}}  \frac{\delta^{-1}}{m_0}  \delta^2 Q^2
\]
many $Q$-rationals in physical space by the above,  the average density of points in the interval in which the hyperplane $L$ intersects the neighbourhood we consider near the dual curve is at most
\[
\frac{1}{s} \sqrt{\lambda_0^{-1} \frac{\delta}{Q}}  \frac{\delta^{-1}}{m_0}  \delta^2 Q^2.
\]
Moreover,  as in the previous case,  because of the repelling property of the hyperplane $L$,  around the interval $J$ of length $u_1$ in which the neighbourhoods intersect the hyperplane (that is, the height of them over the curve is at most $\lambda_0 \alpha_1$),  there is an interval of length $u_2$ up to height $1/m_0 k$ over the hyperplane in which there can be no $m_0$-rationals in dual space contained in the neighbourhoods at all,  and by Lemma \ref{lem::distplanetocurve},
\[
\frac{u_1}{u_2} \lsim \left(\lambda_0 \alpha_1 m_0 k \right)^{1/3}=\lambda_0 s m_0^{4/3}.
\]
Hence,  the average density on the interval of length $u_2$ around $t_0$ coming from the rationals on $L$ is at most
\[
\frac{u_1}{u_2} \frac{1}{s} \sqrt{\lambda_0^{-1} \frac{\delta}{Q}}  \frac{\delta^{-1}}{m_0}  \delta^2 Q^2 \lsim m_0^{1/3} \lambda_0^{1/2} \sqrt{\frac{\delta}{Q}} \delta^{-1} \delta^2 Q^2 \lsim Q^{(5\tau-3)/6} \delta^2 Q^2
\]
as above,  showing that they are again under control.  Hence,  once again,  having the $m_0$-rationals on hyperplane cannot significantly increase the number of rationals we need to deal with. 

The problem cases in which $y$ is close to a tangent line,  an osculating plane or to $\gamma$ itself can be treated similarly.  For example,  if $y$ is close to the osculating plane of $\gamma$ at $t_0$,  then the curve inside $L$ has a cusp at the point corresponding to $t_0$.  We can then partition the neighbourhoods around $t_0$ according to the curvature the intersected curve has and apply the planar case there.  The case with $y$ close to the tangent line can be treated similarly,  and for the case with $y$ close to the curve itself,  we can use duality once more.

We can now put everything together.  Firstly,  the patches $V_j$ with $N_j \leq A Q^4 \vol(V_j)$ contribute at most
\[
A \sum_j Q^4 \vol(V_j) \lsim A \sqrt{\frac{Q}{\delta}} \, Q^4 \left(\frac{\delta}{Q}\right)^{5/2} = A \delta^2 Q^2.
\]
For the remaining patches,  we have shown that the patches whose rationals lie on a line contribute at most $Q^{(3-\tau)/2} \leq \delta^2 Q^2$,  while the patches whose rationals lie on a plane contribute at most $Q^{(2\tau-1)/3}\delta^2Q^2$ in the generic case or $Q^{(5\tau-3)/6}\delta^2Q^2$ in the coplanar case for each choice of the dyadic parameters.  As there are only $O((\log Q)^5)$ such choices and both exponents are negative for $\tau \leq 1/3$,  the plane patches contribute at most $o(\delta^2 Q^2)$.  Altogether,  $\mathcal{N}(\delta, Q) \lsim \delta^2 Q^2$,  which concludes the proof.
\end{proof}

\printbibliography
\end{document}